\DeclareMathAlphabet{\mathpzc}{OT1}{pzc}{m}{it}
\numberwithin{equation}{section}
\begin{document}

\expandafter\let\expandafter\oldproof\csname\string\proof\endcsname
\let\oldendproof\endproof
\renewenvironment{proof}[1][\proofname]{%
	\oldproof[\scshape\hspace{1em}#1]%
}{\oldendproof}

\newtheoremstyle{mystyle_thm}% ⟨name ⟩ 
{6pt}% ⟨Space above ⟩
{6pt}% ⟨Space below ⟩
{\itshape}% ⟨Body font ⟩
{1em}% ⟨Indent amount ⟩2
{\scshape}% ⟨Theorem head font⟩
{.}% ⟨Punctuation after theorem head ⟩
{.5em}% ⟨Space after theorem head ⟩3
{}%

\newtheoremstyle{mystyle_def}% ⟨name ⟩ 
{6pt}% ⟨Space above ⟩
{6pt}% ⟨Space below ⟩
{}% ⟨Body font ⟩
{1em}% ⟨Indent amount ⟩2
{\scshape}% ⟨Theorem head font⟩
{.}% ⟨Punctuation after theorem head ⟩
{.5em}% ⟨Space after theorem head ⟩3
{}%

\theoremstyle{mystyle_thm}

\newtheorem{theorem}{Theorem}[section]
\newtheorem{lemma}[theorem]{Lemma}
\newtheorem{proposition}[theorem]{Proposition}
\newtheorem{corollary}[theorem]{Corollary}
\newtheorem{definition}[theorem]{Definition}
\newtheorem{Ass}[theorem]{Assumption}
\newtheorem{condition}[theorem]{Condition}

\theoremstyle{mystyle_def}

\newtheorem{example}[theorem]{Example}
\newtheorem{remark}[theorem]{Remark}
\newtheorem{SA}[theorem]{Standing Assumption}
\newtheorem{discussion}[theorem]{Discussion}
\newtheorem{remarks}[theorem]{Remark}
\newtheorem*{notation}{Remark on Notation}
\newtheorem{application}[theorem]{Application}
\newtheorem{numexperiment}[theorem]{Numerical Experiment}

%Stochastic Intervals
\newcommand{\of}{[\hspace{-0.06cm}[}
\newcommand{\gs}{]\hspace{-0.06cm}]}

%Lebesgue Measure
\newcommand\llambda{{\mathchoice
		{\lambda\mkern-4.5mu{\raisebox{.4ex}{\scriptsize$\backslash$}}}
		{\lambda\mkern-4.83mu{\raisebox{.4ex}{\scriptsize$\backslash$}}}
		{\lambda\mkern-4.5mu{\raisebox{.2ex}{\footnotesize$\scriptscriptstyle\backslash$}}}
		{\lambda\mkern-5.0mu{\raisebox{.2ex}{\tiny$\scriptscriptstyle\backslash$}}}}}

%Indikator
\newcommand{\1}{\mathds{1}}

%Filtrations
\newcommand{\F}{\mathbf{F}}
\newcommand{\G}{\mathbf{G}}

\newcommand{\B}{\mathbf{B}}

%Martingale Measures
\newcommand{\M}{\mathcal{M}}

%Scalar Product
\newcommand{\la}{\langle}
\newcommand{\ra}{\rangle}

%Special Type of Quadratic Variation
\newcommand{\lle}{\langle\hspace{-0.085cm}\langle}
\newcommand{\rre}{\rangle\hspace{-0.085cm}\rangle}
\newcommand{\blle}{\Big\langle\hspace{-0.155cm}\Big\langle}
\newcommand{\brre}{\Big\rangle\hspace{-0.155cm}\Big\rangle}

%Coordinate Process
\newcommand{\X}{\mathsf{X}}

%Short Cuts
\newcommand{\tr}{\operatorname{tr}}
\newcommand{\N}{{\mathbb{N}}}
\newcommand{\cadlag}{c\`adl\`ag }
\newcommand{\on}{\operatorname}
\newcommand{\oP}{\overline{P}}
\newcommand{\oO}{\mathcal{O}}
\newcommand{\D}{\mathsf{D}} %{D(\mathbb{R}_+; \mathbb{R})}
\newcommand{\bx}{\mathsf{x}}
\newcommand{\bb}{\hat{b}}
\newcommand{\bs}{\hat{\sigma}}
\newcommand{\bv}{\hat{v}}
\renewcommand{\v}{\mathfrak{m}}
\newcommand{\ob}{\bar{b}}
\newcommand{\oa}{\bar{a}}
\newcommand{\os}{\widehat{\sigma}}
\renewcommand{\j}{\varkappa}
\newcommand{\scl}{\ell}
\newcommand{\Y}{\mathscr{Y}}
\newcommand{\Z}{\mathscr{Z}}
\newcommand{\T}{\mathcal{T}}
\newcommand{\con}{\mathsf{c}}
\newcommand{\nk}{\hspace{-0.25cm}{{\phantom A}_k^n}}
\newcommand{\nl}{\hspace{-0.25cm}{{\phantom A}_1^n}}
\newcommand{\nm}{\hspace{-0.25cm}{{\phantom A}_2^n}}
\newcommand{\n}{\hspace{-0.35cm}{\phantom {Y_s}}^n}
\newcommand{\nme}{\hspace{-0.35cm}{\phantom {Y_s}}^{n - 1}}
\renewcommand{\o}{\hspace{-0.35cm}\phantom {Y_s}^0}
\newcommand{\e}{\hspace{-0.4cm}\phantom {U_s}^1}
\newcommand{\z}{\hspace{-0.4cm}\phantom {U_s}^2}
\newcommand{\iii}{|\hspace{-0.05cm}|\hspace{-0.05cm}|}
\newcommand{\co}{\overline{\on{co}}}
\renewcommand{\k}{\mathsf{k}}
\newcommand{\ovb}{\overline{b}}
\newcommand{\ova}{\overline{a}}
\newcommand{\s}{\mathfrak{s}}
\newcommand{\opsi}{\overline{\Psi}}
\newcommand{\ol}{\mathcal{L}}
\newcommand{\cW}{\mathscr{W}^r}
\newcommand{\MC}{\mathcal{E}^\textup{mc}}
\newcommand{\ccP}{\mathscr{D}} %{\ccP}
%Typographical
\renewcommand{\epsilon}{\varepsilon}
\renewcommand{\rho}{\varrho}

\newcommand{\Cross}{\mathbin{\tikz [x=2ex,y=2ex,line width=.22ex] \draw (0,0) -- (1,1) (0,1) -- (1,0);}}

%Semimartingale laws
\newcommand{\fPs}{\fP_{\textup{sem}}}
\newcommand{\fPas}{\mathfrak{S}_{\textup{ac}}}
\newcommand{\rrarrow}{\twoheadrightarrow}
\newcommand{\cA}{\mathcal{A}}
\newcommand{\ocA}{\mathcal{U}}
\newcommand{\cR}{\mathscr{R}}
\newcommand{\cK}{\mathscr{K}}
\newcommand{\cQ}{\mathcal{Q}}
\newcommand{\cF}{\mathcal{F}}
\newcommand{\cE}{\mathcal{E}}
\newcommand{\cC}{\mathscr{D}_{\ell}} %{\mathscr{C}}
\newcommand{\ccC}{\mathscr{D}}
\newcommand{\cD}{\mathscr{C}} %{\mathscr{D}}
\newcommand{\bC}{\mathbb{C}}
\newcommand{\cH}{\mathcal{H}}
\newcommand{\bth}{\overset{\leftarrow}\theta}
\renewcommand{\th}{\theta}
\newcommand{\cG}{\mathcal{G}}
\newcommand{\fPasn}{\mathfrak{S}^{\textup{ac}, n}_{\textup{sem}}}
\newcommand{\CLM}{\mathfrak{M}^\textup{ac}_\textup{loc}}
\newcommand{\Sd}{\mathcal{S}^\textup{sp}_{\textup{d}}}
\newcommand{\Sc}{\mathcal{S}}
\newcommand{\Sac}{\mathcal{S}_\textup{ac}}
\newcommand{\A}{\mathsf{A}}
\newcommand{\Td}{\mathsf{T}^\textup{d}}
\renewcommand{\t}{\mathfrak{t}}
%\DeclareMathOperator*{\gr}{gr}

%Basic Stuff
\newcommand{\bR}{\mathbb{R}}
\newcommand{\nnabla}{\nabla}
\newcommand{\f}{\mathfrak{f}}
\newcommand{\g}{\mathfrak{g}}
\newcommand{\oconv}{\overline{\on{co}}\hspace{0.075cm}}
\renewcommand{\a}{\mathfrak{a}}
\renewcommand{\b}{\mathfrak{b}}
\renewcommand{\d}{\mathsf{d}}
\newcommand{\bS}{\mathbb{S}^d_+}
\newcommand{\p}{\mathsf{p}}
\newcommand{\dr}{r} %{\mathsf{r}}
\newcommand{\m}{\mathbb{M}}
\newcommand{\Q}{Q}
\newcommand{\usc}{\textit{USC}}
\newcommand{\lsc}{\textit{LSC}}
\newcommand{\q}{\mathfrak{q}}
\renewcommand{\X}{\mathscr{X}}
\newcommand{\W}{\mathscr{W}}
\newcommand{\fP}{\mathcal{P}}
\newcommand{\w}{\mathsf{w}}
\newcommand{\oM}{\mathsf{M}}
\newcommand{\oZ}{\mathsf{Z}}
\newcommand{\oK}{\mathsf{K}}
\renewcommand{\Re}{\operatorname{Re}}
\newcommand{\cCk}{\mathsf{c}_k}
\newcommand{\C}{\mathsf{C}}
\newcommand{\cP}{\mathcal{P}}
\newcommand{\oPi}{\overline{\Pi}}

\renewcommand{\emptyset}{\varnothing}

\allowdisplaybreaks

\makeatletter
\@namedef{subjclassname@2020}{%
	\textup{2020} Mathematics Subject Classification}
\makeatother

 \title[Robust Market Convergence]{Robust Market Convergence: From Discrete to Continuous Time} %\\
\author[D. Criens]{David Criens}
\address{Albert-Ludwigs University of Freiburg, Ernst-Zermelo-Str. 1, 79104 Freiburg, Germany}
\email{david.criens@stochastik.uni-freiburg.de}

\keywords{%\vspace{1ex} 
	Robust finance; \(G\)-expectation; approximation scheme; Knightian uncertainty; robust superhedging price; controlled diffusions; controlled Markov chain; dynamic programming.
}

\subjclass[2020]{91B70; 60F17; 60G42; 60G44; 60G65}

\thanks{The author 
	%thanks two anonymous referees for many helpful comments and suggestions. Furthermore, he 
	thanks Carola Heinzel for invaluable help with python.}
\date{\today}

\maketitle

\begin{abstract}
	Continuous time financial market models are often motivated as scaling limits of discrete time models. The objective of this paper is to establish such a connection for a robust framework. More specifically, we consider discrete time models that are parameterized by Markovian transition kernels, and a continuous time framework with drift and volatility uncertainty, again parameterized in a Markovian way. Our main result is a limit theory that establishes convergence of the uncertainty sets in the Hausdorff metric topology and weak convergence of the associated worst-case expectations. Furthermore, we discuss a structure preservation property of certain approximations. Namely, we establish the convergence of discrete to continuous time robust superhedging prices for some complete robust market models. As illustration of our main results, we use the idea of Kushner's Markov chain approximation method and provide a recursive algorithm for the computation of continuous time robust superhedging prices. 
\end{abstract}

\section{Introduction}
The basic idea of robust finance is to account for model uncertainty by taking not only one but a variety of possible market models into consideration. 
Two prominent frameworks are discrete time setups related to the model of Bouchard and Nutz \cite{BN15} and continuous time models in the spirit of Peng's celebrated \(G\)-expectation \cite{peng2007g}. 
There are many reasons for using either a discrete or a continuous time model. On the one hand, discrete time models are numerically tractable and they appear natural as financial trading takes place on a discrete time grid. On the other hand, continuous time models are very popular as they benefit from a well-developed stochastic calculus and related PDE techniques. 

It is a fundamental question how discrete and continuous time frameworks are related. In particular, convergence results provide a theoretical justification for the general use of continuous models. 

For the single prior case, the relation between discrete and continuous models is well-understood (cf., e.g., \cite{DP92}, \cite[Section~6.4]{BK} and the references therein). 
On the contrary, a systematic study for robust market models seems to be missing. 
This is the objective of this paper. More specifically, we investigate the relation of Markovian versions of the Bouchard--Nutz framework and diffusion-type extensions of Peng's \(G\)-expectation.

Let us explain the models under consideration in more detail. Robust market models are defined via so-called {\em uncertainty sets} consisting of probability measures on suitable path spaces.
For our continuous time setup, the uncertainty set is given by
\begin{equation} \label{eq: D}
	\begin{split}
		\cD = \Big\{ \, &\text{laws of all continuous semimartingales \(X\) whose} \\&\qquad \text{ characteristics \((B, C)\) satisfies a.e. } (d B_t / dt, dC_t / dt) \in \Theta (X_t) \, \Big\}, 
	\end{split}
\end{equation}
where \(\Theta (x) := \{ (b (\lambda, x), a (\lambda, x)) \colon \lambda \in \Lambda\}\) captures drift and volatility uncertainty. The set-valued map \(x \mapsto \Theta(x)\) can be considered as domain of uncertainty. 
 It is worth noting that the special case where \(\Theta (x)\) is independent of the space variable \(x\) corresponds to Peng's \(G\)-expectation. Similar continuous models were for example studied in \cite{CN22b, CN22a, nutz, NVH}. 

For the discrete time setup, we consider an uncertainty set \(\ccP\) of probability measures \(P\) given by
\begin{align} \label{eq: C}
P \Big( \underset{k = 0}{\overset{\infty} \Cross}  d \omega_k \Big) = P^0 (d \omega^0) \, \prod_{k = 1}^\infty P^k (\omega_0, \dots, \omega_{k - 1}, d \omega_k), 
\end{align} 
where \(P^0\) is an initial distribution and \((\omega_0, \dots, \omega_{k - 1}) \mapsto P^k ( \omega_0, \dots, \omega_{k - 1} , d y)\) is a (universally) measurable selection from the set \(\psi (\omega_{k-1}) := \{ \Pi (\lambda, \omega_{k - 1}, d y) \colon \lambda \in \Lambda\}\) with a probability kernel coefficient \(\Pi\) that captures the uncertainty of the transition probabilities. This can be seen as a Markovian version of the Bouchard--Nutz model (\cite{BN15}).

The input for our analysis is a family \((\ccP^{h})_{h > 0}\) of discrete time models \eqref{eq: C} associated to kernel coefficients \((\Pi_h)_{h > 0}\) and the set \(\cD\) of continuous models from \eqref{eq: D}, which is the potential limiting object. 
As in the classical single prior case, it is convenient to consider the natural linear interpolations for the discrete framework that we denote by~\((\cC^{h})_{h > 0}\).

The main contribution of this paper is a limit theory that establishes the convergence \(\cC^h\to\cD\) of the uncertainty sets in the Hausdorff metric topology. This can be seen as an invariance principle for robust market models that provides a theoretical justification for the use of robust diffusion models. Studying the convergence of uncertainty sets in a hyperspace topology appears to be a new approach that distinguishes from previous ones for \(G\)-expectations, stochastic control and the numerical analysis of nonlinear PDEs, which mainly deal with the approximation of worst-case expectations and value functions. As uncertainty sets are the main objects of interest for robust market models, this reflects the different methodologies from robust finance and optimal control. 
Motivated by the dual representation of robust superhedging prices as worst-case expectations (see \cite{BN15,nutzneufeld13,nutz15}), we also prove that 
\begin{align} \label{eq: convergence worst case}
	\sup_{P \in \cC^h} E^P \big[ \varphi \big] \xrightarrow{\quad} \sup_{P \in \cD} E^P \big[ \varphi \big], \quad h \searrow 0,
\end{align} 
for all bounded \(\cD\)-almost surely continuous test functions \(\varphi\). Further, we discuss ramifications for bounded \(\cD\)-almost surely upper and lower semicontinuous functions \(\varphi\). We view \eqref{eq: convergence worst case} as a weak convergence result for ``generalized'' \(G\)-expectations with state-dependent domains of uncertainty.

The prerequisites of our limit theorems can be viewed as robust versions of the Stroock--Varadhan conditions (\cite[Chapter~11]{SV}) for the convergence of Markov chains to diffusions and in the spirit of
\begin{align*}
	\frac{1}{h} \int_{\|y - x\| \leq 1} (y_i - x_i) \, \Pi_h (\lambda, x, dy) &\xrightarrow{\quad} b^i (\lambda, x),
	\\
	\frac{1}{h} \int_{\|y - x\| \leq 1} (y_i - x_i) (y_j - x_j)\, \Pi_h (\lambda, x, dy) &\xrightarrow{\quad} a^{ij} (\lambda, x),
	\\
	 \frac{1}{h} \, \sup_{\lambda, x} \Pi_h (\lambda, x, \{y \colon \|y - x\| \geq \varepsilon\} ) &\xrightarrow{\quad} 0. \phantom {\int_{\|y - x\| \leq 1}}
\end{align*} 
It is worth to mention that these conditions seem to be new in our robust context.
For sake of clarity, we simplified the precise setting and the assumptions above. Indeed, to get the convergence results, the kernel coefficients \((\Pi_h)_{h > 0}\) have to approximate not only the coefficients \(b\) and \(a\) from \(\cD\) but extensions that also take the implicit random drivers into consideration. 
We refer to our main text for the precise statements.

Given the convergence of the market models, it is an interesting follow up question whether the convergence preserves some structure. For example, in classical single prior frameworks, it is known that the convergence of binomial approximations propagates to the associated fundamental price processes, cf. \cite[Section~6.4]{BK}. 
We investigate a closely related question within our robust framework. 
Namely, for a one-dimensional continuous model with drift and elliptic volatility uncertainty, we consider a robust binomial approximation and establish convergence of the associated robust superhedging prices. To prove this result, we pass to dual representations in the spirit of \cite{BN15,nutzneufeld13,nutz15} and deduce their convergence from \eqref{eq: convergence worst case}.

Adapting the idea behind Kushner's celebrated Markov chain approximation method (\cite{K77, K90, KD}), our convergence results lead to a recursive scheme for the computation of robust superhedging prices associated to the model \(\cD\). Let us shortly detail the main idea. Results in \cite{CN_MAFE,nutzneufeld13,nutz15} show that the robust superhedging price \(\pi (f)\) of a contingent claim with payoff \(f\) has the dual representation \(\sup_{Q \in \mathscr{Q}} E^Q [f]\), where \(\mathscr{Q}\) is defined as \(\cD\) with \(\Theta (x)\) replaced by \(\Theta^* (x) := \{ (0, a (\lambda, x)) \colon \lambda \in \Lambda\}\). 
By virtue of \eqref{eq: convergence worst case}, this continuous worst case expectation can be a approximated by discrete versions which can be computed by dynamic programming algorithms for suitable payoff functions (see \cite{bauerle_rieder,bershre,KD}).

Let us now comment on related literature.
Donsker-type theorems for the \(G\)-expectation were established in the papers \cite{D12,DNS12}. More precisely, it was shown that the \(G\)-expectation can be approximated by discrete time analogues. We remark that dynamic programming algorithms are available for our framework (\cite{bauerle_rieder,bershre, KD}). This is also the case for the approximation sequence from \cite{D12} but, as explained in \cite{D12}, not necessarily for those from~\cite{DNS12}. 

Our financial market models have a close connection to stochastic optimal control. To wit, measurable selection arguments show that they admit control representations with feedback controls, which exclusively depend on the controlled process in a possibly non-Markovian manner. Under convexity assumptions on the domains of uncertainty \(\Theta (x)\) and \(\psi^h (x)\), our robust market models also have weak and relaxed control representations (see \cite[Section 5]{CN23a}) that allow randomized controls. The weak control framework can be seen as a convexification of the feedback control setting and the relaxed framework is a convex- and compactification. In this paper, we assume that \(\Theta (x)\) is convex, meaning that our continuous model \(\cD\) has a weak and relaxed control representation (see \cite[Theorem~5.1]{CN23a}). However, we do not assume that the discrete uncertainty sets \((\ccP^h)_{h > 0}\) are convex, i.e., they might not have weak or relaxed control representations. From a financial point of view, it is very natural to aim for a theory that includes non-convex discrete time models. For example, any reasonable robust binomial or tree model lacks convexity. In this regard, our main result \(\cC^h \to \cD\) cannot always be formulated in the language of weak or relaxed control. We refer to Discussion~\ref{diss: MCAM} below for more comments on the relation of our approach to stochastic control theory.

Motivated by Kushner's approximation method, there are many publications that investigate approximation schemes for value functions in weak and relaxed control frameworks, see, e.g., \cite{K77, K90, KD, Menaldi_89}. 
More recently, \cite{T14} provided a probabilistic interpretation of the numerical Monte Carlo scheme for nonlinear parabolic PDEs from \cite{FTW11} in terms of controlled discrete time semimartingales and the papers \cite{LTT23, PTZ21} study different controlled semimartingale approximation schemes in a relaxed control framework with partial information and expectation constraints, respectively. 
As mentioned above, all of these works aim for an approximation of value functions. 
Technically, we adapt many ideas from these references in our proofs. 

The main idea behind the proof of the Hausdorff metric convergence is to adapt the sequential characterizations of upper and lower hemicontinuity of set-valued maps (\cite[Chapter~17]{charalambos2013infinite}) and to prove related properties. We mention that such properties are also established in \cite{DNS12,K90,LTT23,PTZ21,T14} for their (mostly control) frameworks but they were not connected to a hyperspace topology.

For the upper part, we provide two different proofs that are of independent interest. 
In the first, we adapt techniques developed in~\cite{C23b,CN22a} for a time and path continuous robust setting to our framework that contains discrete time objects. This relies on concepts from set-valued analysis. 
The second proof uses the relaxed control formulation of \(\cD\) that was established in \cite{CN23a}. To compare these proofs, the first one is of interest as it does not rely on a parameterization via an action space \(\Lambda\), which is crucial for the use of control arguments, while the second one appears less technical.

For the lower part, we transfer ideas from the monograph \cite{SV} and the papers \cite{DNS12,LTT23,PTZ21} to our setting.
To wit, we use the fact that, under suitable assumptions, piecewise constant strong control rules with continuous weights are dense in the uncertainty set \(\cD\). For such control rules we construct discrete time approximations. 

Given the properties related to upper and lower hemicontinuity, we establish \eqref{eq: convergence worst case} in the spirit of Berge's maximum theorem. 

Finally, let us also mention a delicate point in the proof for the convergence of the superhedging prices. As explained above, the main idea is to use dual representations. For the continuous setting, we can directly refer to \cite{nutz15} but, as the robust binomial setting is not convex, the duality result from \cite{BN15} does not apply to our discrete setting. This forces us to go into the proof from \cite{BN15} and to adapt it to our setting.

Lastly, we comment on a purely analytic approach.
It is well-known (see, e.g., \cite{CN22b, hol16}) that worst-case expectations related to uncertainty sets of the type \(\cD\) have a close connection to sublinear semigroups. 
A general stability theory for convex monotone semigroups has recently been established in the paper \cite{BKN23}. The theory gives access to discrete approximation schemes, cf. \cite[Section~3.6]{BKN23} for an example in the context of stochastic optimal control. The approach from \cite{BKN23} is based on analytic methods, which contrasts our probabilistic and set-valued point of view.

The paper is organized as follows. 
The coefficients and corresponding assumptions are collected in Section~\ref{sec: setting}. Our discrete and continuous time settings are detailed in Section~\ref{sec: MC Di}. The main results are given in Section~\ref{sec: MR}. The algorithm to compute robust superhedging prices can be found in Section~\ref{sec: app robust SHP} and the convergence of discrete to continuous time superhedging prices is discussed in Section~\ref{sec: app2 conv SHP}. The remainder of the paper is dedicated to the proofs. In Section~\ref{sec: MP relation} we establish martingale and control representations of the uncertainty sets. The main 
proofs are then given in the Sections~\ref{sec: pf} and \ref{sec: pf iii}.

\section{From Discrete to Continuous Time} \label{sec: main1}
This section is organized as follows. First, we introduce the coefficients to build the setup. Second, we define a system of discrete time market models and their proposed continuous time limit. Finally, we formulate and discuss our convergence results.

\subsection{The Coefficients and Assumptions} \label{sec: setting}
In this paper, parameter uncertainty will be captured by a so-called action space \(\Lambda\), which we assume to be a compact convex subset of a strictly convex separable Banach space (such as a separable Hilbert space, for instance). This assumption is mainly technical. In all examples presented below, \(\Lambda\) will be a compact convex subset of \(\bR^k\) for some \(k \geq 1\).

For a fixed dimension \(d \geq 2\), \(h > 0\) and \(\lambda \in \Lambda\), let \(\Pi_h (\lambda, \, \cdot \,, \cdot \, )\) be a probability transition kernel from \((\bR^{d}, \mathcal{B}(\bR^{d}))\) into itself.
This coefficient will model the transition probabilities of the approximating sequence.

For a Polish space \(E\), \(\cP (E)\) denotes the space of probability measures on \((E, \mathcal{B}(E))\) endowed with the weak topology \(\sigma (\mathcal{P}(E), C_b (E; \bR))\) of convergence in distribution. 

\begin{SA} \label{SA: 1}
    The map \((\lambda, x) \mapsto \Pi_h (\lambda, x, dy)\) is continuous from \(\Lambda \times \bR^{d}\) into \(\mathcal{P}(\bR^{d})\).
\end{SA}

Next, we introduce the coefficients of the limiting diffusion. They arise from the approximate drift and volatility coefficients that are given by
\begin{align*}
	b^i_h (\lambda, x) &:=	\frac{1}{h} \int_{\|y - x\| \leq 1} (y_i - x_i) \, \Pi_h (\lambda, x, dy), 
	\\
	a_h^{ij} (\lambda, x) &:= \frac{1}{h} \int_{\|y - x\| \leq 1} (y_i - x_i) (y_j - x_j)\, \Pi_h (\lambda, x, dy)
\end{align*}
for \(i, j = 1, \dots, d, \lambda \in \Lambda\) and \(x \in \bR^{d}\).
\begin{SA}\label{SA: 2}
	There exists a constant \(\delta> 0\) such that 
	\begin{align*}
		\sup \Big\{ \| b_h (\lambda, x) \| + \|a_h (\lambda, x) \| \colon h \in (0, \delta],\, \lambda \in \Lambda, \, x \in \bR^{d} \Big\} < \infty, 
	\end{align*}
	and, for every \(\varepsilon > 0\), as \(h \searrow 0\), 
	\begin{align*}
		\Delta_h^\varepsilon := \frac{1}{h} \, \sup \Big\{ \Pi_h (\lambda, x, \{y \colon \|y - x\| \geq \varepsilon\} ) \colon (\lambda, x) \in \Lambda \times \bR^{d} \Big\} \to 0.
	\end{align*}
\end{SA}

\begin{SA} \label{SA: 3}
	There are dimensions \(m, r \in \N\) such that \(m + r = d\) and functions \[b \colon \Lambda \times \bR^{m} \to \bR^{m}, \quad \sigma \colon \Lambda \times \bR^m \to \bR^{m \times r}\] with the following properties:
	\begin{enumerate}
		\item[\textup{(i)}] The coefficients \(b\) and \(\sigma\) are bounded and continuous.
		\item[\textup{(ii)}] For every \(x \in \bR^m\), the set \(\{ (b (\lambda, x), \sigma \sigma^* (\lambda, x)) \colon \lambda \in \Lambda \} \subset \bR^m \times \mathbb{S}^m\) is convex. Here, \(\sigma^*\) denotes the adjoint of \(\sigma\) and \(\mathbb{S}^m\) denotes the set of symmetric positive semidefinite \(m \times m\)  matrices with real entries.
		\item[\textup{(iii)}] As \(h \searrow 0\),
		\begin{align*}
			b_h (\lambda, x)  &\xrightarrow{\quad} (0_r, b (\lambda, \pi_m (x))) =: \ob (\lambda, x), \\
			a_h (\lambda, x) &\xrightarrow{\quad} \begin{pmatrix} \on{Id}_r & \sigma^* (\lambda, \pi_m (x)) \\ \sigma (\lambda, \pi_m (x)) & \sigma \sigma^* (\lambda, \pi_m (x)) \end{pmatrix} =: \oa (\lambda, x)
		\end{align*}
		uniformly in \(\lambda\) and \(x\) on compact subsets of \(\Lambda \times \bR^{d}\). Here, \(0_r\) denotes the zero vector in \(\bR^r\), \(\on{Id}_r\) denotes the \(r \times r\) unit matrix and  \(\pi_m (x) := (x_{r + 1}, \dots, x_{d})\) for \(x = (x_1, \dots, x_d)\in \bR^d\).
	\end{enumerate}
\end{SA}

The Standing Assumptions~\ref{SA: 1} -- \ref{SA: 3} will be in force throughout this paper. For some of our results, we also impose the following condition. 
\begin{condition} \label{cond: main1}
			The coefficients \(b\) and \(\sigma\) from Standing Assumption~\ref{SA: 3} satisfy a Lipschitz condition uniformly in the \(\Lambda\)-variable, i.e., there exists a constant \(C >0 \) such that 
		\[
		\|b (\lambda, x) - b (\lambda, y)\| + \|\sigma (\lambda, x) - \sigma (\lambda, y)\| \leq C \|x - y\|
		\]
		for all \(\lambda \in \Lambda\) and \(x, y \in \bR^m\).
\end{condition}

Let us shortly comment on the structure of \(\ob\) and \(\oa\). If we consider a controlled SDE of the type
\[
d Z_t = b( \lambda_t, Z_t) dt + \sigma (\lambda_t, Z_t) d W_t, 
\]
then the coefficients \(\ob\) and \(\oa\) correspond to the bi-variate process \((W, Z)\) that captures the controlled process together with its driver.

\subsection{The Discrete and Continuous Time Frameworks} \label{sec: MC Di}
In the following we introduce the models under consideration. Recall from Standing Assumption~\ref{SA: 3} that \(d = m + r\) for \(m, r \in \mathbb{N}\).
We start with the continuous time framework. Define 
\[
\Omega^{m} := \Big\{ \omega \colon \bR_+ \to \bR^{m} \text{ continuous} \Big\} 
\]
and endow this space with the local uniform topology.
The coordinate process on \(\Omega^{m}\) is denoted by \(X\). It is well-known that \(\mathcal{B}(\Omega^{m}) = \sigma (X_t, t\geq 0) =: \cF^{m}\). Further, we introduce the natural filtration \(\cF^{m}_t := \sigma (X_s, s \leq t)\) for \(t \geq 0\). 

Let \(\fPas^{m}\) be the set of all probability measures \(P\) on the setup \((\Omega^{m}, \cF^{m}, (\cF^{m}_t)_{t \geq 0})\) such that the coordinate process \(X\) is a continuous \(P\)-semimartingale whose semimartingale characteristics \((B^P, C^P)\) are absolutely continuous w.r.t. the Lebesgue measure \(\llambda\). 
With \(b\) and \(\sigma\) as in Standing Assumption~\ref{SA: 3}, we set 
\begin{align*}
	\Theta (x) := \Big\{ (b (\lambda, x), \sigma \sigma^* (\lambda, x)) \colon \lambda \in \Lambda \Big\} \subset \bR^{m} \times \mathbb{S}^{m}, \quad x \in \bR^m.
\end{align*}
The uncertainty set for the continuous time market models under consideration is given by 
\begin{align*}
	\cD (x) := \Big\{ P \in \fPas^{m} \colon P (X_0 = x) = 1, \ (\llambda \otimes P)\text{-a.e. } (dB^P/ d \llambda, dC^P / d \llambda) \in \Theta (X) \Big\}
\end{align*}
for an initial value \(x \in \bR^m\). 

\begin{example}[Random \(G\)-expectations] \label{rem: GBM}
	To ease our presentation, we restrict our discussion to the case \(m = 1\). 
Let \(b^*, b_* \colon \bR \to \bR\) and \(a^*, a_* \colon \bR \to (0, \infty)\) be continuous functions such that, for a constant \(C > 0\), 
\[
- C \leq b_* \leq b^* \leq C, \quad \frac{1}{C} \leq a_* \leq a^* \leq C.
\]
For a fixed initial value \(x_0\in \bR\), we consider a robust financial market whose physical measures are given by 
\begin{equation} \label{eq: US G BM} \begin{split}
	\mathscr{V} := \Big\{ P \in \fPas^1 \colon & P (X_0 = x_0) = 1,  (\llambda \otimes P)\text{-a.e. } \\& dB^P/ d \llambda \in [b_* (X), b^*(X)], d C^P / d \llambda \in [a_* (X), a^* (X)] \Big\}.
	\end{split}
\end{equation}
This type of model is closely related to the random \(G\)-expectations as studied in \cite{nutz,NVH} and also to the robust market model considered in~\cite{CN_MAFE}. In particular, when \(b_*, b^*, a_*\) and \(a^*\) are constants, we recover Peng's classical \(G\)-expectation, cf. \cite{DHP11}.

To translate the set \(\mathscr{V}\) into our framework, we take \(\Lambda := [0, 1] \times [0, 1]\) and
\begin{align*}
	b (\lambda_1, \lambda_2, x) &:=  b_* (x) + \lambda_1 \, (b^* (x) - b_* (x)), \\
	\sigma (\lambda_1, \lambda_2, x) &:= \sqrt{ a_* (x) + \lambda_2 \, (a^* (x) - a_*(x)) }.
\end{align*}
In this case, it is easily seen that \(\mathscr{V} = \cD (x_0)\). Another parameterization is discussed in Example~\ref{ex: BinM} below.
\end{example}

Next, we introduce the discrete time framework.
The canonical path space for an \(\bR^d\)-valued discrete time process is given by  
\[
\Sigma^d := \Big\{ f \colon \mathbb{Z}_+ \to \bR^{d} \Big\}.
\] 
We denote the coordinate process on this space by \(Y = (Y_k)_{k = 0}^\infty\). 
Further, we consider the natural \(\sigma\)-field and filtration generated by the coordinate process, i.e., 
\[ 
\cG^d := \sigma (Y_k, k \geq 0), \quad \cG_n^d := \sigma (Y_k, k \leq n ), \quad n \geq 0.
\]

For \(h > 0\) and \(x \in \bR^d\), we set 
\[
\psi^h (x) := \Big\{ \Pi_h (\lambda, x, dy) \colon \lambda \in \Lambda\Big\}.
\]
The set-valued map \(x \mapsto \psi^h (x)\) has compact values and it is continuous, see \cite[Proposition~1.4.14]{AF}. In turn, it admits a measurable selector (\cite[Theorem~12.1.10]{SV}).

For \(h > 0\) and \(x \in \bR^m\), we define \(\ccP^h (x)\) to be the set of all probability measures \(P\) on \((\Sigma^d, \cG^d)\) such that 
\begin{align} \label{eq: P prod}
P \Big( \underset{k = 0}{\overset{\infty} \Cross}  d \omega_k \Big) = P^0 (d \omega^0) \, \prod_{k = 1}^\infty P^k (\omega_0, \dots, \omega_{k - 1}, d \omega_k), 
\end{align}
	where \(P^0 := \delta_{(0_r, x)}\) and \(P^n\) is a measurable selector of \(\psi^h (Y_{n - 1})\). Notice that the set \(\ccP^h (x)\) has the same product structure that is used in \cite{BN15}.
	Evidently, for \(P\) as in \eqref{eq: P prod}, \(P\)-a.s. 
	\[
	P ( Y_k \in dy \mid \cG	^d_{k -1}) = P^k (dy) \in \psi^h (Y_{k - 1}) = \Big\{ \Pi_h (\lambda, Y_{k-1}, dy) \colon \lambda \in \Lambda\Big\}.
	\]	
	This shows that \(P\) is the law of a controlled Markov chain with feedback controls.\footnote{Here, {\em feedback controls} do not refer to controls with a Markovian structure but to dependence solely on the paths of the controlled process.}
	
\begin{example}[Cox--Ross--Rubinstein Model under Parameter Uncertainty] \label{ex: CRR}
	We now present a discrete time model that will turn out to be an approximation of the continuous time random \(G\)-expectation model from Example~\ref{rem: GBM}. 
	Let \(\Lambda := [0, 1] \times [0, 1]\), and take \(b\) and \(\sigma\) as in Example~\ref{rem: GBM}.
	Further, take a random variable \(\xi\) with zero expectation and unit variance, which might have an absolutely continuous or discrete distribution. 
	For \(h > 0\) and \((\lambda, x = (x_1, x_2)) \in \Lambda \times \bR^2\), let \(\Pi_h (\lambda, x, dy)\) be the distribution of the bi-variate random variable 
	\begin{align} \label{ex: 2.6, rade}
	x + \big( \xi \, \sqrt{h}, \, b (\lambda, x_2) \, h + \sigma (\lambda, x_2) \, \xi \, \sqrt{h}\, \big).
	\end{align} 
	It is routine to check that the Standing Assumptions~\ref{SA: 1} -- \ref{SA: 3} are satisfied in this case.

To give an interpretation, suppose that \(b \equiv 0\) and that \(\xi\) has a Rademacher distribution, i.e., \(\xi = \pm 1\) with equal probability. In this case, for each \(h > 0\), in each time step the paths go either up or down with a size that is object to parameter uncertainty (represented by the dependence on the set \(\Lambda\), which may be seen as all possible parameters under consideration). 
\end{example}

\subsection{The Convergence Result} \label{sec: MR}
Using linear interpolation, we connect the approximating discrete time models to the path space of the continuous model. 
More precisely, with
\begin{align} \label{eq: interpol}
\X_t (h) := \Big( \Big\lfloor \, \frac{t}{h}\, \Big \rfloor + 1 - \frac{t}{h} \Big)  \, \pi_m (Y_{\lfloor t/h\rfloor}) + \Big( \frac{t}{h} - \Big\lfloor \, \frac{t}{h}\, \Big \rfloor\Big) \, \pi_m (Y_{\lfloor t/h \rfloor + 1}), \quad t \geq 0,
\end{align}
we set
\[
\cC^h (x) := \Big\{ P \circ \X (h)^{-1} \colon P \in \ccP^h (x) \Big\} \subset \mathcal{P} (\Omega^m), \quad x \in \bR^m.
\]
Here, recall that \(\pi_m (x_1, \dots, x_d) = (x_{r + 1}, \dots, x_d)\).

Let \(\p\) be a metric on \(\mathcal{P}(\Omega^m)\) that induces its topology of convergence in distribution. For two sets \(A, B \subset \cP (\Omega^m)\), we define its Hausdorff distance by
\begin{align*}
	\mathsf{h} (A, B) := \max \Big\{ \sup_{x \in A} \p (x, B), \, \sup_{y \in B} \p (y, A) \Big\}, 
\end{align*}
where \(\p (x, B) := \inf_{y \in B} \p (x, y)\) denotes the distance function. The map \(\mathsf{h}\) is called {\em Hausdorff metric} on the space of all subsets of \(\cP (\Omega^m)\). This name is slightly misleading as \(\mathsf{h}\) is only an extended pseudometric, i.e., a pseudometric that may take the value \(+ \infty\). 
 In general, Hausdorff metric convergence may depend on the metric of the underlying space. In the following theorem, which constitutes the main result of this paper, we present a convergence result in terms of \(\mathsf{h}\) that is independent of the particular choice of \(\mathsf{p}\). 
 
 To fix some terminology, for a set \(\mathscr{P}\) of probability measures, we say that a property holds {\em \(\mathscr{P}\)-almost surely}, written \(\mathscr{P}\)-a.s., if it holds \(P\)-a.s. for every \(P \in \mathscr{P}\). 
 
 We say that a sequence \((\varphi^n)_{n = 1}^\infty\) of real-valued functions on \(\Omega^m\) converges to a function \(\varphi \colon \Omega^m \to \bR\) {\em boundedly and uniformly on compacts (b-uc)} if 
 \[
 \sup_{n \in \mathbb{N}} \| \varphi^n \|_\infty < \infty, 
 \]
 and \(\varphi^n \to \varphi\) uniformly on all compact subsets of \(\Omega^m\). 

\begin{theorem}  \label{theo: main1}
Suppose that the Standing Assumptions~\ref{SA: 1}, \ref{SA: 2} and \ref{SA: 3} are in force.
\begin{enumerate}
	\item[\textup{(i)}] For every \(x \in \bR^m\), the sets \(\cD (x)\) and \(\cC^h (x)\) are nonempty and \(\cD (x)\) is a compact subset of \(\cP(\Omega^m)\).
\end{enumerate}
If Condition~\ref{cond: main1} holds additionally, we also have the following:
\begin{enumerate}
	\item[\textup{(ii)}] For every sequence \((h_n)_{n = 1}^\infty \subset (0, \infty)\) with \(h_n \searrow 0\), and every sequence \((x_n)_{n = 0}^\infty \subset \bR^m\) with \(x_n \to x_0\), 
	\[
	\mathsf{h} (\cC^{h_n} (x_n) , \cD (x_0)) \to 0, \quad n \to \infty, 
	\]
            \item[\textup{(iii)}]
            	and, for every bounded \(\cD (x_0)\)-a.s. continuous function \(\varphi \colon \Omega^{m} \to \bR\), and any sequence \((\varphi^n)_{n = 1}^\infty\) of upper semianalytic functions \(\Omega^m \to \bR\) such that \(\varphi^n \to \varphi\) b-uc, 
	\[
	\sup_{Q \in \cC^{h_n} (x_n)} E^Q \big[ \varphi^n \big] \to \sup_{Q \in \cD (x_0)} E^Q \big[ \varphi \big], \quad n \to \infty.
	\]
\end{enumerate}
\end{theorem}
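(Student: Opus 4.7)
I would decompose (ii) into an upper direction (any weak subsequential limit of elements of \(\cC^{h_n}(x_n)\) lies in \(\cD(x_0)\)) and a lower direction (every element of \(\cD(x_0)\) is the weak limit of elements of \(\cC^{h_n}(x_n)\)), and deduce (iii) from (ii) by a Berge-type maximum-theorem argument adapted to upper semianalytic test functions. Part (i) serves as preparation: nonemptiness of \(\cC^h(x)\) comes from iterated measurable selection (Kuratowski--Ryll-Nardzewski) applied to the continuous compact-valued map \(\psi^h\); nonemptiness of \(\cD(x)\) comes from classical weak existence for SDEs with bounded continuous coefficients (fix any \(\lambda_0\in\Lambda\) and solve the corresponding SDE); and compactness of \(\cD(x)\) follows from tightness via Aldous's criterion (using the uniform bounds on \((b,\sigma\sigma^*)\)) together with closedness obtained by recasting \(\cD(x)\) as a martingale problem expressed through the support function of the convex set \(\Theta(X)\), a formulation preserved under weak convergence.

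\textbf{Part (ii).} For the upper direction I fix \(Q^n\in\cC^{h_n}(x_n)\) and a subsequential weak limit \(Q\). Tightness of \((Q^n)\) and path continuity of \(Q\) both come from Standing Assumption~\ref{SA: 2}: the uniform bounds on \(b_h,a_h\) yield Aldous-type control of the increments of \(\X(h_n)\), while \(\Delta^\varepsilon_h\to 0\) rules out jumps in the limit. To identify \(Q\), I construct discrete-time approximate martingales for the first and second moments of the canonical process on the \(h_n\)-grid with coefficients \(b_{h_n}\) and the lower-right block of \(a_{h_n}\), pass to the limit using the uniform-on-compacts convergence \((b_h,a_h)\to(\ob,\oa)\) of Standing Assumption~\ref{SA: 3}(iii), and invoke convexity of \(\Theta\) to keep the limiting (projected) characteristics inside \(\Theta(X)\) a.s. The lower direction is the main obstacle: given \(P\in\cD(x_0)\), I would first approximate \(P\) in distribution by a strong solution of the controlled SDE \(dZ_t=b(\lambda_t,Z_t)dt+\sigma(\lambda_t,Z_t)dW_t\) driven by a piecewise-constant-in-time strong control rule with continuous weights, in the spirit of \cite{SV,DNS12,LTT23,PTZ21}; density of such controls in \(\cD(x_0)\) and stability of the SDE both rely on Condition~\ref{cond: main1}. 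I would then discretize by recursively sampling \(Y^n_k\sim\Pi_{h_n}(\lambda_{kh_n}(Y^n_{k-1}),Y^n_{k-1},\cdot)\), which is an admissible measurable selector of \(\psi^{h_n}\), and argue that the linear interpolations converge in distribution to the target strong solution by combining Lipschitz stability with the moment convergence of Standing Assumption~\ref{SA: 3}(iii).

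\textbf{Part (iii).} When \(\varphi^n\equiv\varphi\) is bounded and \(\cD(x_0)\)-a.s.\ continuous, \(Q\mapsto E^Q[\varphi]\) is continuous at every \(Q\in\cD(x_0)\) (Portmanteau), so convergence of the suprema is immediate from Hausdorff convergence, in the spirit of Berge's maximum theorem. For general upper semianalytic \(\varphi^n\to\varphi\) b-uc, universal measurability ensures \(E^Q[\varphi^n]\) is well-defined for every \(Q\in\cC^{h_n}(x_n)\). I would prove the two one-sided bounds by a standard sandwich: pick near-maximizers \(Q^n\in\cC^{h_n}(x_n)\) of \(E^{\cdot}[\varphi^n]\), extract a weak subsequential limit \(Q\in\cD(x_0)\) using the upper part of (ii), and combine b-uc convergence with Portmanteau at \(Q\) to control the \(\limsup\); conversely, given a near-maximizer \(P\in\cD(x_0)\) of \(E^{\cdot}[\varphi]\), use the lower part of (ii) to produce approximants \(P^n\in\cC^{h_n}(x_n)\) close to \(P\) in distribution, and apply b-uc convergence once more to obtain the matching \(\liminf\).
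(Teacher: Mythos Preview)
Your proposal is correct and follows essentially the same architecture as the paper: the upper/lower hemicontinuity decomposition for (ii) is exactly Propositions~\ref{prop: upper hemi} and~\ref{prop: lower hemi}, and your sandwich argument for (iii) via near-maximizers is precisely Theorems~\ref{theo: upper} and~\ref{theo: lower}.

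Two points where the paper's execution differs from your sketch are worth flagging. First, in the lower direction the paper does \emph{not} identify the limit by a direct Lipschitz/Gronwall estimate (``Lipschitz stability''); Standing Assumption~\ref{SA: 2} only controls truncated moments, so instead the paper shows tightness of the full \(d\)-dimensional interpolated chain, passes to a subsequential limit, verifies that the limit solves the martingale problem for the \((W,Z)\)-SDE, and then invokes uniqueness in law from Condition~\ref{cond: main1}. Second, and relatedly, your recursive sampling rule must feed the control \(\alpha\) the interpolated first \(r\) coordinates of the chain (the approximate Brownian path), not just the single point \(Y^n_{k-1}\): the strong control \(\alpha\) lives on \(\Omega^r\), and it is precisely this coupling of control to driver that makes the limiting law identifiable. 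With these two details corrected, your outline coincides with the paper's proof.
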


	\begin{remark}[On time-dependent coefficients]
		Our main Theorem~\ref{theo: main1} is formulated for a time-homogeneous framework. As the diffusion coefficient is not assumed to be non-degenerate, time-dependence can be incorporated into our framework by considering the time-space process (see \cite[Exercise~6.7.2]{SV} for this approach in the uncontrolled setting). More precisely, to treat time-dependent coefficients \(\tilde{b} \colon \Lambda \times \bR_+ \times \bR^m \to \bR^m\) and \(\tilde{\sigma} \colon \Lambda \times \bR_+ \times \bR^m \to \mathbb{R}^{m \times r}\) consider the auxiliary coefficients \(\hat{b} \colon \Lambda \times \bR^{m + 1} \to \bR^{m + 1}\) and \(\hat{\sigma} \colon \Lambda \times \bR^{m + 1} \to \bR^{m + 1 \times r}\) given by   
		\begin{align*}
			\hat{b} (\lambda, x_1, x_2,  \dots, x_{m + 1}) &:= \big(1, \tilde{b}\, (\lambda, x_1 \vee 0, x_2, \dots, x_{m + 1}) \big), \\ 
			\hat{\sigma} (\lambda, x_1, x_2, \dots, x_{m + 1}) &:= 
			\begin{pmatrix} 0  \\  \tilde{\sigma} (\lambda, x_1 \vee 0, x_2, \dots, x_{m + 1}) \end{pmatrix}.
		\end{align*} 
		This approach captivates through its simplicity but it does not lead to optimal assumptions, because Condition~\ref{cond: main1} requires Lipschitz continuity in the time domain. Going through the proofs below, it becomes evident that mere continuity in time is sufficient. We leave the details to the reader.
	\end{remark}

\begin{example}
	Provided the coefficients are Lipschitz continuous, Theorem~\ref{theo: main1} shows that the robust Cox--Ross--Rubinstein model from Example~\ref{ex: CRR} converges to the random \(G\)-expectation model from Example~\ref{rem: GBM}. This resembles the well-known fact that the classical Cox--Ross--Rubinstein model converges to the Bachelier (or Black--Scholes, when formulated via log-returns) model.
\end{example}

\begin{example}[Binomial Approximation of Example~\ref{rem: GBM}] \label{ex: BinM}
	We now discuss a binomial approximation of the market model from Example~\ref{rem: GBM}.
	Let \(b^*, b_* \colon \bR \to \bR\) and \(a^*, a_* \colon \bR \to (0, \infty)\) be Lipschitz continuous functions such that, for a constant \(C > 0\), 
	\[
	- C \leq b_* \leq b^* \leq C, \quad \frac{1}{C} \leq a_* \leq a^* \leq C.
	\]
	We take the action space
	\begin{align*}
		\Lambda := [0, 1] \times [0, 1] \times [0, 1] \times [0, 1].
	\end{align*}
	For \((\lambda = (\lambda_1, \dots, \lambda_4), x = (x_1, x_2)) \in \Lambda \times \bR^2\) and \(h > 0\), define 
	\begin{align*}
		\Phi_h (\lambda, x, dy) :=p (\lambda, x) \, &\delta_{x + ( \sqrt{h} \, v (\lambda, x),\, u_h (\lambda, x))} (dy) \\&\quad+ (1 - p (\lambda, x))\, \delta_{x + (- \sqrt{h} / v (\lambda, x),\, d_h (\lambda, x))} (dy), 
	\end{align*}
	where 
	\begin{align*}
		p (\lambda, x) &:= \frac{\sqrt{a_* (x_2)  + \lambda_4 (a^* (x_2) - a_* (x_2))}}{\sqrt{a_* (x_2)  + \lambda_4 (a^* (x_2) - a_* (x_2))} + \sqrt{a_* (x_2)  + \lambda_3 (a^* (x_2) - a_* (x_2))}}, \\
		v (\lambda, x) & := \sqrt{\frac{\sqrt{a_* (x_2)  + \lambda_3 (a^* (x_2) - a_* (x_2))}}{\sqrt{a_* (x_2)  + \lambda_4 (a^* (x_2) - a_* (x_2))}}},  
		\\
		u_h (\lambda, x) &:=  h\, ( b_* (x_2) + \lambda_1 (b^* (x_2) - b_* (x_2))) + \sqrt{h} \, \sqrt{a_* (x_2)  + \lambda_3 (a^* (x_2) - a_* (x_2))}, \phantom{\int^a}
		\\
		d_h (\lambda, x) &:=  h\, ( b_* (x_2) + \lambda_2 (b^* (x_2) - b_* (x_2))) -  \sqrt{h} \, \sqrt{a_* (x_2) + \lambda_4 (a^* (x_2) - a_* (x_2))}. \phantom{\int^a}
	\end{align*}
	We have 
	\begin{align*}
		\Big\{ (u_h (\lambda, x), d_h (\lambda, x)) \colon \lambda \in \Lambda \Big\} = \Big[ h b_* (x) &+ \sqrt{h a_* (x) }, h b^* (x) + \sqrt{h a^* (x)} \, \Big] 
		\\&\times \Big[ h b_* (x) - \sqrt{h a^* (x) }, h b_* (x) - \sqrt{h a_* (x) }\,\Big].
	\end{align*}
	Putting this in perspective, the above model considers up and down jumps with uncertain jump sizes taken from the above intervals.
	A short computation yields that 
	\begin{align*}
		\frac{1}{h} \int (x_1 - y_1) \, \Pi_h (\lambda, x, dy) &= 0, 
		\\
		\frac{1}{h} \int (x_2 - y_2) \, \Pi_h (\lambda, x, dy) & = p (\lambda, x) ( b_* (x_2) + \lambda_1 (b^* (x_2) - b_* (x_2))) 
		\\&\hspace{0.75cm}+ (1 - p (\lambda ,x))(b_* (x_2) + \lambda_2 (b^* (x_2) - b_* (x_2))),
		\\
		\frac{1}{h}	\int (x_1 - y_1)^2\, \Pi_h (\lambda, x, dy) &= 1,
		\\ 
		\frac{1}{h}	\int (x_1 - y_1)(x_2 - y_2)\, \Pi_h (\lambda, x, dy) &\xrightarrow{h \, \searrow \, 0} \sqrt{w (\lambda, x)}, 
		\\
		\frac{1}{h}	\int (x_2 - y_2)^2\,  \Pi_h (\lambda, x, dy) &\xrightarrow{h \, \searrow \, 0} w (\lambda, x),
	\end{align*}
	where
	\begin{align*}
		w (\lambda, x) :=\sqrt{ (a_* (x_2)  + \lambda_3 (a^* (x_2) - a_* (x_2)))(a_* (x_2)  + \lambda_4 (a^* (x_2) - a_* (x_2)))}.
	\end{align*}
	Therefore, noting that 
	\begin{align*}
		\Big\{ p (\lambda, x) ( &b_* (x_2) + \lambda_1 (b^* (x_2) - b_* (x_2))) 
		\\&+ (1 - p (\lambda , x))(b_* (x_2) + \lambda_2 (b^* (x_2) - b_* (x_2))) \colon \lambda \in \Lambda \Big\} = \big[ b_* (x_2), b^* (x_2) \big], \\
		&\hspace{5.55cm}\Big\{ w (\lambda, x)  \colon \Lambda \in \Lambda \Big\} = \big[ a_* (x_2), a^* (x_2) \big], 
	\end{align*}
	it follows in a straightforward manner from Theorem~\ref{theo: main1} that the above discrete time market model is an approximation of the continuous market model with uncertainty set~\eqref{eq: US G BM}.
\end{example}

Part (i) from Theorem~\ref{theo: main1} follows from \cite[Theorem~4.4]{CN22a} and standard extension arguments.
The proof for part (ii) is given in Section~\ref{sec: pf} below. Part (iii) follows from the following (slightly) stronger results, whose proofs can be found in Section~\ref{sec: pf iii} below. 
\begin{theorem} \label{theo: upper}
	Suppose that the Standing Assumptions~\ref{SA: 1} -- \ref{SA: 3} are in force.
	For every sequence \((x_n)_{n = 0}^\infty \subset \bR^m\) with \(x_n \to x_0\), every bounded \(\cD (x_0)\)-a.s. upper semicontinuous function \(\varphi \colon \Omega^{m} \to \bR\), every sequence \((\varphi^n)_{n = 1}^\infty\) of upper semianalytic functions \(\Omega^m \to \bR\) with \(\varphi^n \to \varphi\) b-uc, and every sequence \((h_n)_{n = 1}^\infty \subset (0, \infty)\) with \(h_n \searrow 0\), 
	\begin{align*}
		\limsup_{n \to \infty} \sup_{P \in \cC^{h_n} (x_n)} E^{P} \big[ \varphi^n \big] \leq \sup_{P \in \cD (x_0)} E^P \big[ \varphi \big].
	\end{align*}
\end{theorem}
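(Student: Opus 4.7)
My plan is to run the standard invariance-principle scheme, adapted to the robust setting. Fix $\eta > 0$ and, for each $n$, choose $Q_n \in \cC^{h_n}(x_n)$ with $E^{Q_n}[\varphi^n] \geq \sup_{Q \in \cC^{h_n}(x_n)} E^Q[\varphi^n] - \eta$; letting $\eta \downarrow 0$ at the end, it suffices to show $\limsup_n E^{Q_n}[\varphi^n] \leq \sup_{Q \in \cD(x_0)} E^Q[\varphi]$. By construction each $Q_n$ is the law, under some $P_n \in \ccP^{h_n}(x_n)$, of the linear interpolation $\X(h_n)$ defined in \eqref{eq: interpol}. Passing to a subsequence along which the limsup is attained, the claim reduces to three items: (a) tightness of $(Q_n)$ in $\cP(\Omega^m)$, (b) identifying every weak subsequential limit $Q^*$ as an element of $\cD(x_0)$, and (c) showing $\limsup_k E^{Q_{n_k}}[\varphi^{n_k}] \leq E^{Q^*}[\varphi]$.

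Step (a) should be routine: by Standing Assumption~\ref{SA: 2} and the product form \eqref{eq: P prod} of $P_n$, the one-step conditional drift and quadratic variation of the underlying chain $Y$ are uniformly bounded, and $\Delta^\varepsilon_{h_n}\to 0$ provides a vanishing large-jump rate, from which an Aldous-type modulus-of-continuity argument yields tightness of the interpolated process $\X(h_n)$. Step (c) is also relatively standard: writing $E^{Q_{n_k}}[\varphi^{n_k}] = E^{Q_{n_k}}[\varphi^{n_k} - \varphi] + E^{Q_{n_k}}[\varphi]$, tightness of $(Q_{n_k})$ combined with the b-uc convergence $\varphi^{n_k}\to\varphi$ and the uniform bound on $(\varphi^n)$ kills the first summand in the limit; for the second I would replace $\varphi$ by its bounded upper semicontinuous envelope $\widetilde\varphi$, which dominates $\varphi$ pointwise and agrees with $\varphi$ on the $Q^*$-full set of USC-points, so that the Portmanteau theorem for bounded USC functions gives $\limsup_k E^{Q_{n_k}}[\widetilde\varphi]\leq E^{Q^*}[\widetilde\varphi]=E^{Q^*}[\varphi]$.

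The heart of the argument, and the step I expect to be most delicate, is (b). My plan is a martingale-problem characterization. For $f\in C^2_c(\bR^m)$, a careful Taylor expansion of $f(\pi_m(Y_k))-f(\pi_m(Y_{k-1}))$ combined with a kernel selector $\lambda_k^n\in\Lambda$ encoded in the product representation of $P_n$ expresses its one-step conditional mean as $h_n$ times the second-order operator with coefficients $\pi_m(b_{h_n}(\lambda_k^n,Y_{k-1}))$ and the bottom-right $m\times m$ block of $a_{h_n}(\lambda_k^n,Y_{k-1})$, applied to $f$ at $\pi_m(Y_{k-1})$, plus an $o(h_n)$ error controlled by the truncation in the definition of $b_{h_n},a_{h_n}$ together with $\Delta^\varepsilon_{h_n}\to 0$. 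Standing Assumption~\ref{SA: 3}(iii) makes these coefficients converge locally uniformly to $b(\lambda,\pi_m(y))$ and $\sigma\sigma^*(\lambda,\pi_m(y))$. Summing and passing to the weak limit should yield continuous $Q^*$-martingales; convexity of $\Theta$ (Standing Assumption~\ref{SA: 3}(ii)) together with a measurable selection performed on the limiting process, in the spirit of Chapter~11 of \cite{SV}, then forces the absolutely continuous characteristics of $X$ under $Q^*$ to lie in $\Theta(X_t)$, i.e.\ $Q^*\in\cD(x_0)$. The initial condition $Q^*(X_0=x_0)=1$ follows from $x_n\to x_0$. The main subtlety is that the discrete uncertainty sets $\cC^{h_n}(x_n)$ are not assumed convex, so no relaxed-control representation is available on the discrete side and the selection has to be carried out only on the limiting level.
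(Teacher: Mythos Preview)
Your proposal is correct and follows essentially the same approach as the paper: the paper also selects near-maximizers, invokes a separately proved proposition (Proposition~\ref{prop: upper hemi}) for your steps (a) and (b), and then combines tightness with b-uc convergence and the Portmanteau theorem for a.s.\ upper semicontinuous functions to obtain (c). For the identification step (b) the paper lifts to the full \(d\)-dimensional chain and offers two variants---one via set-valued analysis on the joint law of process and compensators (close to your martingale-problem sketch) and one via the relaxed-control representation of \(\cD(x_0)\)---but the underlying mechanism (convexity of \(\Theta\) plus selection on the limiting level) is exactly what you describe.
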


\begin{theorem} \label{theo: lower}
		Suppose that the Standing Assumptions~\ref{SA: 1} -- \ref{SA: 3} and Condition~\ref{cond: main1} are in force.
	For every sequence \((x_n)_{n = 0}^\infty \subset \bR^m\) with \(x_n \to x_0\), every bounded \(\cD(x_0)\)-a.s. lower semicontinuous function \(\varphi \colon \Omega^{m} \to \bR\), every sequence \((\varphi^n)_{n = 1}^\infty\) of upper semianalytic functions \(\Omega^m \to \bR\) with \(\varphi^n \to \varphi\) b-uc, and every sequence \((h_n)_{n = 1}^\infty \subset (0, \infty)\) with \(h_n \searrow 0\),
	\begin{align*}
		\liminf_{n \to \infty} \sup_{P \in \cC^{h_n} (x_n)} E^{P} \big[ \varphi^n \big] \geq \sup_{P \in \cD (x_0)} E^P \big[ \varphi \big].
	\end{align*}
\end{theorem}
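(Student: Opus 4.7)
The plan is to follow the strategy announced in the introduction: approximate any $P^\star \in \cD(x_0)$ by the law of a controlled SDE with a piecewise constant strong control whose weights depend continuously on the path, build discrete counterparts in $\ccP^{h_n}(x_n)$ by plugging these weights into the approximating kernels $\Pi_{h_n}$, show that the interpolated laws converge weakly in $\cC^{h_n}(x_n)$, and finish by a Portmanteau-type argument that accommodates $\varphi^n \to \varphi$ b-uc.

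First, fix $\epsilon > 0$ and pick $P^\star \in \cD(x_0)$ with $E^{P^\star}[\varphi] \geq \sup_{P \in \cD(x_0)} E^P[\varphi] - \epsilon$. Using the strong/relaxed control representation of $\cD$ to be developed in Section~\ref{sec: MP relation}, and adapting density arguments from \cite[Chapter~11]{SV} and \cite{DNS12, LTT23, PTZ21}, I would produce $P' \in \cD(x_0)$ arising from a control of the form $\lambda_t = \sum_j \lambda^j(Z_{\cdot \wedge t_j})\,\1_{[t_j, t_{j+1})}(t)$ on a deterministic grid $0 = t_0 < t_1 < \cdots$, with each $\lambda^j \colon \Omega^m \to \Lambda$ continuous, and such that $|E^{P'}[\varphi] - E^{P^\star}[\varphi]| < \epsilon$. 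Condition~\ref{cond: main1} enters via pathwise uniqueness (Yamada--Watanabe), which permits mollification of measurable control weights into continuous ones without altering the induced law.

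Next, for each $n$ and each time step $k$, let $j(n, k)$ be the unique index with $t_{j(n, k)} \leq k h_n < t_{j(n, k) + 1}$, and define the discrete selector $\lambda^n_k(\omega_0, \dots, \omega_{k-1}) := \lambda^{j(n, k)}(\iota_{n,k}(\omega_0, \dots, \omega_{k-1}))$, where $\iota_{n,k}$ denotes the piecewise linear interpolation of $(\pi_m(\omega_0), \dots, \pi_m(\omega_{k-1}))$ frozen at $t_{j(n, k)}$. The product construction \eqref{eq: P prod} with kernels $\Pi_{h_n}(\lambda^n_k, \cdot, \cdot) \in \psi^{h_n}(\cdot)$ produces $P'_n \in \ccP^{h_n}(x_n)$; let $Q_n \in \cC^{h_n}(x_n)$ be the law of $\X(h_n)$ under $P'_n$. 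The key convergence step is $Q_n \to P'$ weakly in $\cP(\Omega^m)$. Tightness follows from standard increment estimates using the uniform bounds on $b_{h_n}, a_{h_n}$ from Standing Assumption~\ref{SA: 2} and the rarefaction $\Delta^\varepsilon_{h_n} \to 0$; identification of the weak limit proceeds through the martingale problem, exploiting the uniform convergence $b_{h_n} \to \ob$, $a_{h_n} \to \oa$ of Standing Assumption~\ref{SA: 3}, continuity of the weights $\lambda^j$, and Lipschitz uniqueness from Condition~\ref{cond: main1}.

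Finally, since $P' \in \cD(x_0)$ and $\varphi$ is $\cD(x_0)$-a.s.\ lower semicontinuous, the Portmanteau theorem yields $\liminf_n E^{Q_n}[\varphi] \geq E^{P'}[\varphi]$. The difference $E^{Q_n}[\varphi^n - \varphi]$ vanishes because $(Q_n)$ is tight, $\sup_n \|\varphi^n\|_\infty < \infty$, and $\varphi^n \to \varphi$ uniformly on compacts. Combining these with $\sup_{P \in \cC^{h_n}(x_n)} E^P[\varphi^n] \geq E^{Q_n}[\varphi^n]$ and letting $\epsilon \searrow 0$ gives the theorem. The main obstacle is the density step: strong controls need not be dense among relaxed controls without further structure, and producing \emph{continuous} weights from merely measurable ones while respecting adaptedness is delicate; Condition~\ref{cond: main1} is crucial here to invoke pathwise uniqueness. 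The invariance principle $Q_n \to P'$, while technically involved, follows familiar Stroock--Varadhan patterns once the control has been discretized.
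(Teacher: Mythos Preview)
Your proposal follows the same architecture as the paper's proof: the paper packages the approximation \(\cC^{h_n}(x_n) \ni Q_n \to Q \in \cD(x_0)\) into a separate ``lower hemi'' statement (Proposition~\ref{prop: lower hemi}) and then deduces the theorem by a short Portmanteau/b-uc argument, while you merge both steps into a single \(\epsilon\)-argument. The substantive content---density of piecewise constant strict controls with continuous weights, construction of the discrete chain by plugging these weights into the kernels \(\Pi_{h_n}\), tightness plus martingale-problem identification under Lipschitz uniqueness (Condition~\ref{cond: main1})---is identical.

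Two technical remarks. First, the density results you invoke (in particular \cite[Lemma~5.7]{PTZ21}) produce piecewise constant controls that are adapted to the \emph{Brownian driver} on \(\Omega^r\), not to the solution on \(\Omega^m\); accordingly the paper runs the whole construction on the extended space \(\Omega^d\), applies the continuous weights to the first \(r\) coordinates via \(\pi_r\), and only projects to \(\Omega^m\) at the end. Your choice \(\lambda^j \colon \Omega^m \to \Lambda\) with \(\iota_{n,k}\) built from \(\pi_m\) would require a separate density argument for solution-dependent feedback controls, which the cited references do not directly supply; the simple fix is to work on \(\Omega^d\) as the paper does. Second, the two-sided bound \(|E^{P'}[\varphi] - E^{P^\star}[\varphi]| < \epsilon\) is not in general available for a merely \(\cD(x_0)\)-a.s.\ lower semicontinuous \(\varphi\); Portmanteau along the approximating sequence only gives \(E^{P'}[\varphi] \geq E^{P^\star}[\varphi] - \epsilon\) for large enough indices, but that one-sided inequality is all your final chain of estimates actually needs.
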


\begin{remark}
\begin{enumerate}
	\item[\textup{(a)}]
Part (iii) from Theorem~\ref{theo: main1} implies that the map 
\[
x \mapsto \sup_{Q \in \cD (x)} E^Q \big[ \varphi \big] 
\]
is continuous from \(\bR^m\) into \(\bR\). This fact is (essentially) known (cf., e.g., \cite{CN22b,CN22a}) but we provide a new proof.
\item[\textup{(b)}] A class of functions that might only be almost surely continuous (or a.s. semi-continuous), but not continuous everywhere, is given by \(\varphi = g (X_{\tau_D})\), where \(\tau_D := \inf \{t \geq 0 \colon X_t \not \in D\}\) for a domain \(D \subset \bR^m\). In general, \(\tau_D\) is only lower semicontinuous but, under suitable assumptions on the domain \(D\) and the coefficients \(b\) and \(a\), \(\tau_D\) turns out to be \(\cD (x_0)\)-a.s. finite and continuous, which then transfers to \(g (X_{\tau_D})\) under suitable assumptions on the function \(g \colon \bR^m \to \bR\).
\end{enumerate}
\end{remark}

	\begin{discussion} 
		\label{diss: MCAM}
It is interesting to understand how Theorem~\ref{theo: main1} relates to relaxed control rules, which can be seen as a convex- and compactification of our setting. 
 By \cite[Theorem~5.1]{CN23a}, the set \(\cD\) has a relaxed control representation under Standing Assumption~\ref{SA: 3}, see Lemma~\ref{lem: relaxed controls} below for a restatement of this result. This is not necessarily the case for \((\ccP^h)_{h > 0}\), because these sets might not be convex. In this regard, part (ii) from Theorem~\ref{theo: main1} has no direct translation into the language of relaxed controls. 
Nevertheless, combining our methods with ideas from~\cite{K90}, one can prove a version of Theorem~\ref{theo: main1}~(ii) in a relaxed control setting, which appears to be a new perspective for such frameworks.\footnote{We again stress that in relaxed frameworks the uncertainty sets (in particular the analogs of \((\ccP^h)_{h > 0}\)) are necessarily convex.}

Let us also comment on the relation of (ii) and (iii) from Theorem~\ref{theo: main1}. 
We start with a simple fact (see \cite[Lemma~12.1.7]{SV} for a related result). 
\begin{lemma} \label{lem: max continuous}
Let \(E\) be a Polish space, denote its compact subsets by \(\on{comp} (E)\) and endow this space with the Hausdorff metric topology. Further, take a continuous function \(g \colon E \to \bR\). Then, 
	\[
	K \mapsto \max_{x \in K} g (x)
	\] 
	is continuous from \(\on{comp} (E)\) into \(\bR\).
\end{lemma}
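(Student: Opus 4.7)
The plan is to prove sequential continuity directly, by establishing upper and lower bounds for the limit of $\max_{K_n} g$ along an arbitrary sequence $K_n \to K$ in $\on{comp}(E)$. Since each $K_n$ and $K$ is compact and $g$ is continuous, the maxima are attained, which makes the two one-sided arguments symmetric in spirit but separate in execution.

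For the upper bound, I would pick $x_n \in K_n$ attaining $\max_{K_n} g$. Hausdorff convergence gives $\sup_{x \in K_n} d(x, K) \to 0$, so by compactness of $K$ there exist $z_n \in K$ with $d(x_n, z_n) \to 0$. A convergent subsequence $z_{n_k} \to z \in K$ then forces $x_{n_k} \to z$, and continuity of $g$ yields $g(x_{n_k}) \to g(z) \leq \max_K g$. Applying this subsequence argument to any subsequence of $(g(x_n))_{n \geq 1}$ produces $\limsup_{n \to \infty} \max_{K_n} g \leq \max_K g$.

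For the lower bound, I would fix $x^* \in K$ with $g(x^*) = \max_K g$. The other direction of Hausdorff convergence, $\sup_{y \in K} d(y, K_n) \to 0$, together with compactness of each $K_n$, yields points $y_n \in K_n$ realizing $d(x^*, y_n) = d(x^*, K_n) \to 0$. Continuity of $g$ then gives $g(y_n) \to g(x^*)$, and since $\max_{K_n} g \geq g(y_n)$, we conclude $\liminf_{n \to \infty} \max_{K_n} g \geq \max_K g$. Combining the two bounds settles the claim.

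There is no serious obstacle here: the key mild point is the need to pass to a subsequence in the upper-bound step, handled by the standard ``subsequence of subsequence'' argument, and the attainment of minimal distances via compactness of $K$ and $K_n$ respectively. Everything else is direct from the definition of the Hausdorff metric and continuity of $g$.
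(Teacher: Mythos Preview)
Your argument is correct: the two one-sided estimates via maximizers and the Hausdorff distance characterization are clean, and the subsequence-of-subsequence step handles the \(\limsup\) bound properly.

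The paper takes a genuinely different route. Rather than arguing sequentially, it works topologically: it shows that preimages of suitable open (resp.\ closed) sets \(A \subset \bR\) under \(K \mapsto \max_K g\) are open (resp.\ closed) in \(\on{comp}(E)\), by rewriting \(\{K : \max_K g \in A\}\) in terms of the ``hit'' and ``miss'' sets \(\{K : K \subset G\}\) and \(\{K : K \cap G \neq \emptyset\}\) for appropriate \(G = \{g \in A\}\), and then invoking the standard fact (cited from \cite{SV}) that such sets are open or closed in the Hausdorff metric topology. Your approach is more elementary and fully self-contained, using only compactness and the metric definition of Hausdorff convergence; the paper's approach is shorter but relies on external structural results about the Vietoris topology on \(\on{comp}(E)\).
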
 
\begin{proof}
	By virtue of \cite[Theorem~3.91]{charalambos2013infinite}, it suffices to understand that \(\{ K \colon \max_{K} g \in A \}\) is closed (resp. open) in case \(A \subset \bR\) is closed (resp. open). Because
	\[
	\Big\{ K \in \on{comp} (E) \colon \max_K g \in A \Big\} = \Big\{ K \in \on{comp} (E)\colon K \subset \{ g \in A \} \Big\}
	\]
	and \(\{g \in A\}\) is open or closed according whether \(A\) is open or closed (because \(g\) is continuous), this follows from \cite[Lemma~12.1.2]{SV}. 
\end{proof}

In case each \(\cC^{h_n} (x_n)\) is compact,\footnote{This holds if (in addition to our other assumptions) the set  \(\psi^h (x) = \{ \Pi_h (\lambda, x, dy) \colon \lambda \in \Lambda\}\) is convex. Indeed, under this hypothesis our setting has a relaxed control formulation that entails compactness.} part (ii) from Theorem~\ref{theo: main1} and Lemma~\ref{lem: max continuous} yield that 
\[
\max_{P \in \cC^{h_n} (x_n)} E^P \big[ \varphi \big] \to \max_{P \in \cD(x_0)} E^P \big[ \varphi \big] 
\]
for every bounded continuous function \(\varphi \colon \Omega^m \to \bR\). This resembles a version of part (iii) from Theorem~\ref{theo: main1}. 

In the relaxed control framework the uncertainty sets are typically compact and, via Lemma~\ref{lem: max continuous}, Hausdorff metric convergence propagates naturally to value functions. This explains how a control version of Theorem~\ref{theo: main1}~(ii) relates naturally to results that are of interest in control theory.
	\end{discussion}

\section{Computation and Convergence of Robust Superhedging Prices}

In this section, we present two applications of Theorem~\ref{theo: main1}. First, we discuss how it yields a simple algorithm to compute robust superhedging prices and second, we prove that the binomial approximation in the spirit of Example~\ref{ex: BinM} propagates to the robust superhedging prices.

\subsection{Computation of Robust Superhedging Prices}  \label{sec: app robust SHP}
Kushner's celebrated Markov chain approximation method (\cite{K77,K90,KD}) is a powerful numerical approach for Hamilton--Jacobi--Bellman PDEs. The basic idea is to consider a control solution in terms of a controlled diffusion, to approximate latter by a controlled Markov chain, to solve the associated discrete time problem and finally, to show that the solutions converge to each other. 
	
	In the following, in terms of a simple recursive schemes for robust superhedging prices, we adapt this idea to our financial context. 
	At this point, we stress that our discussion seeks only for an illustration of the idea and not for a comprehensive numerical study.

\smallskip 

To ease our presentation, we consider the one-dimensional setting from Example~\ref{rem: GBM}. To recall the assumptions, let \(b^*, b_* \colon \bR \to \bR\) and \(a^*, a_* \colon \bR \to (0, \infty)\) be continuous functions such that, for \(C > 0\), 
\[
- C \leq b_* \leq b^* \leq C, \quad \frac{1}{C} \leq a_* \leq a^* \leq C.
\]
The uncertainty set of the model under consideration is given by 
\begin{align*}
		\mathscr{V} := \Big\{ P \in \fPas^1 \colon & P (X_0 = x_0) = 1,  (\llambda \otimes P)\text{-a.e. } \\& dB^P/ d \llambda \in [b_* (X), b^*(X)],\, d C^P / d \llambda \in [a_* (X), a^* (X)] \Big\},
\end{align*}
where \(x_0 \in \bR\) is an arbitrary initial value.
We denote the corresponding set of ``equivalent local martingale measures'' by
\begin{align*}
	\mathscr{M} := \Big\{ Q \in \mathcal{P} (\Omega^1) \colon \exists \, P \in \mathscr{V} \text{ with } P \sim_\textup{loc} Q \text{ and \(X\) is a local \(Q\)-martingale} \Big\}.
\end{align*}
We are interested in computing the corresponding robust superhedging price of a contingent claim with payoff
\begin{align} \label{eq: payoff f}
	f := \int_0^T g (X_s) ds + \ell (X_T),
\end{align}
where \(T \in (0, \infty)\) is a fixed time horizon and \(g, \ell \in C_b (\bR; \bR)\). The integral \(\int_0^T g (X_s) ds\) is a running payoff and \(\ell (X_T)\) is a terminal payoff. We restrict our attention to such payoffs, because they allow us to use classical dynamic programming algorithms.

The robust superhedging price is given by the formula
\begin{align*}
	\pi (f) 
	:= \inf \Big\{ x \in \bR \colon \exists \, H \in \mathcal{H} \text{ with } x + \int_0^T H_s d X_s \geq f \ \mathscr{M}\text{-a.s.} \Big\},
	\end{align*}
where \(\mathcal{H}\) denotes the set of all admissible integrands, i.e., all predictable integrands \(H\) such that \(\int_0^{\cdot \wedge T} H_s d X_s\) is a well-defined as a \(P\)-supermartingale for all \(P \in \mathscr{M}\), cf.~\cite[p.~4550]{nutz15}.

\begin{proposition} \label{prop: computation RSP}
	Assume that \(a_*\) and \(a^*\) are Lipschitz continuous. Then, 
\begin{align} \label{eq: approx formula} 
	\pi (f) = \lim_{h \searrow 0} \,  S_h^{\lfloor T / h \rfloor} (\ell) (x_0),
\end{align} 
where \(S^N_h\) denotes the \(N\)-times composition of the operator
\[
S_h (J) (x) := h g (x) + \frac{1}{2} \, \sup_{\lambda \in [0, 1]} \Big\{ J (x + \sqrt{h}\, \sigma (\lambda, x) ) \, + J (x - \sqrt{h}\, \sigma (\lambda, x)) \Big\}
\]
with \(\sigma (\lambda, x) := \sqrt{ a_* (x) + \lambda\, (a^* (x) - a_* (x))}\).
\end{proposition}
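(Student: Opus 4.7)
The strategy is to combine the dual representation of \(\pi(f)\) with Theorem~\ref{theo: main1}(iii) applied to a suitably constructed binomial approximation, and then to identify the discrete worst-case expectation with the recursive operator \(S_h\) via dynamic programming.

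First, by the duality result of \cite{nutz15}, \(\pi(f) = \sup_{Q \in \mathscr{M}} E^Q[f]\). Standard Girsanov/martingale arguments show that \(\mathscr{M}\) coincides with \(\cD(x_0)\) in the sense of Section~\ref{sec: MC Di}, provided we take the martingale-type coefficients
\[
b \equiv 0, \quad \sigma(\lambda, x) := \sqrt{a_*(x) + \lambda(a^*(x) - a_*(x))}, \quad \Lambda = [0, 1].
\]
Standing Assumption~\ref{SA: 3} holds trivially, and Condition~\ref{cond: main1} follows from the hypothesized Lipschitz continuity of \(a_*, a^*\) together with the uniform lower bound \(a_* \geq 1/C\), which renders \(\sqrt{\cdot}\) Lipschitz on the relevant range.

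Second, introduce the driftless Rademacher kernel from Example~\ref{ex: CRR},
\[
\Pi_h(\lambda, x, dy) := \tfrac{1}{2}\delta_{x + (\sqrt{h},\, \sqrt{h}\,\sigma(\lambda, x_2))}(dy) + \tfrac{1}{2}\delta_{x - (\sqrt{h},\, \sqrt{h}\,\sigma(\lambda, x_2))}(dy),
\]
for which the Standing Assumptions are readily verified. For a sequence \(h_n \searrow 0\) set \(N_n := \lfloor T/h_n \rfloor\), and define
\[
\varphi^n(\omega) := h_n \sum_{k=0}^{N_n - 1} g(\omega_{kh_n}) + \ell(\omega_{N_n h_n}), \qquad \varphi(\omega) := \int_0^T g(\omega_s)\,ds + \ell(\omega_T).
\]
Both functions are bounded and Borel, \(\varphi\) is continuous on \(\Omega^1\), and the equicontinuity of paths in any compact subset of \(\Omega^1\) together with the Riemann-sum approximation of \(\int_0^T g(\omega_s)ds\) yields \(\varphi^n \to \varphi\) b-uc. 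Applying Theorem~\ref{theo: main1}(iii) (with any fixed initial value \(x_n \equiv x_0\)) then gives
\[
\sup_{P \in \cC^{h_n}(x_0)} E^P[\varphi^n] \xrightarrow{\quad} \sup_{P \in \cD(x_0)} E^P[\varphi] = \pi(f).
\]

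Third, identify the left-hand side with \(S_{h_n}^{N_n}(\ell)(x_0)\). Since \(\X(h_n)_{kh_n} = \pi_m(Y_k)\) and \(\varphi^n\) has been chosen so that the terminal and running evaluations occur only at grid points, for \(P = P_0 \circ \X(h_n)^{-1}\) with \(P_0 \in \ccP^{h_n}(x_0)\) we have
\[
E^P[\varphi^n] = E^{P_0}\!\left[h_n \sum_{k=0}^{N_n - 1} g(\pi_m(Y_k)) + \ell(\pi_m(Y_{N_n}))\right].
\]
Because, conditionally on a selector \(\lambda\) at time \(k\), the transition of \(\pi_m(Y)\) is \(\pi_m(Y_k) \pm \sqrt{h_n}\,\sigma(\lambda, \pi_m(Y_k))\) with equal probability, since the reward is additive and Markovian, and since the admissible kernel selectors \(P^k\) depend on the history only through the set-valued constraint \(\psi^{h_n}(Y_{k-1})\), the standard dynamic-programming principle for controlled Markov chains with feedback controls (cf.~\cite{bauerle_rieder, bershre, KD}) implies that the supremum over \(P_0 \in \ccP^{h_n}(x_0)\) equals exactly \(S_{h_n}^{N_n}(\ell)(x_0)\). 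Combining with the previous display yields \eqref{eq: approx formula}.

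The main obstacle is the dynamic-programming identification: although selectors \(P^k\) in \(\ccP^h\) may be genuinely history-dependent, the Markov structure of \(\Pi_h\) together with the additive Markovian reward permits a reduction to Markovian selectors through a backward-induction/measurable-selection argument. A secondary technical nuisance is that \(T\) need not be a multiple of \(h_n\); the choice to truncate \(\varphi^n\) at \(N_n h_n\) (rather than at \(T\)) ensures that \(E^P[\varphi^n]\) matches the discrete Bellman value exactly while still granting b-uc convergence \(\varphi^n \to \varphi\), since \(T - N_n h_n < h_n \to 0\) and \(\ell\) is uniformly continuous on compacts.
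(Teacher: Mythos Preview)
Your proposal is correct and follows essentially the same route as the paper: pass to the dual via \cite{nutz15}, identify \(\mathscr{M}\) with \(\cD(x_0)\) for the driftless parameterization \(b\equiv 0,\ \sigma(\lambda,x)=\sqrt{a_*(x)+\lambda(a^*(x)-a_*(x))}\), set up the Rademacher kernel of Example~\ref{ex: CRR}, verify b-uc convergence of the discretized payoffs, invoke Theorem~\ref{theo: main1}(iii), and close with a dynamic-programming identification of the discrete value with \(S_h^{\lfloor T/h\rfloor}(\ell)(x_0)\). The only cosmetic difference is that the paper cites a concrete reference (an extension of \cite[Corollary~2.13]{CN_MAFE}) for the identification of \(\mathscr{M}\), whereas you appeal to ``standard Girsanov/martingale arguments''; the dynamic-programming step is handled in both cases by the results in \cite{bauerle_rieder,bershre}.
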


\begin{remark}
	The value
	\(
	S_h^{\lfloor T / h \rfloor} (\ell) (x_0)
	\)
	in the formula \eqref{eq: approx formula} can be computed by backward iteration (see also the algorithm on p.~24 in \cite{bauerle_rieder}). 
	
	The parameterization \(\sigma (\lambda, x) = \sqrt{ a_* (x) + \lambda\, (a^* (x) - a_* (x))}\) in Proposition~\ref{prop: computation RSP} is taken for convenience. It will be clear from the proof that any parameterization \(\sigma (\lambda, x)\) that satisfies Condition~\ref{cond: main1} would do the job.
\end{remark}

\begin{proof}[Proof of Proposition~\ref{prop: computation RSP}]
	The robust superhedging theorem \cite[Theorem~3.2]{nutz15} allows us to pass to the dual formulation of the robust superhedging price that is given by
	\begin{equation} \label{eq: superhedging duality}
		\begin{split}
			\pi (f) = \sup_{Q \in \mathscr{M}} E^Q \big[ f \big].
		\end{split}
	\end{equation}
Next, we translate the r.h.s. into our framework from the previous section. By a minor extension of \cite[Corollary~2.13]{CN_MAFE} to an infinite time horizon setting, we obtain that 
\begin{equation} \label{eq: concrete}
	\begin{split}
		\mathscr{M} = \Big\{ P \in \fPas^1 \colon & P (X_0 = x_0) = 1,  (\llambda \otimes P)\text{-a.e. } \\& dB^P/ d \llambda = 0,\, d C^P / d \llambda \in [a_* (X), a^* (X)] \Big\}.
	\end{split}
\end{equation}
With this representation at hand, we can follow Kushner's approximation method and set up an approximation scheme for the worst case expectation in~\eqref{eq: superhedging duality}. 

We use part (iii) from Theorem~\ref{theo: main1}. In order to do so, we have to translate \eqref{eq: concrete} into the framework from the previous section. 
Similar to Example~\ref{rem: GBM}, take \(\Lambda := [0, 1], b \equiv 0\) and
\begin{align*}
	\sigma (\lambda, x) &:= \sqrt{ a_* (x) + \lambda \, (a^* (x) - a_*(x)) }.
\end{align*}
It is easy to see that \(\mathscr{M} = \cD (x_0)\). 

Next, we introduce the approximation scheme. 
We start with the payoff function \(f\). Set 
\begin{align} \label{eq: payoff approx}
	f^h := \sum_{i = 0}^{\lfloor T / h \rfloor - 1} h g(X_{i h}) + \ell (X_{\lfloor T / h \rfloor h} ), \quad h > 0.
\end{align}
It is evident that \(\|f^h\|_\infty \leq  T\,\|g\|_\infty + \|\ell\|_\infty\). 
Further, the Arzel\`a--Ascoli theorem shows that 
\begin{align*}
	\big| f - f^h \big| \leq \sum_{i = 0}^{\lfloor T / h \rfloor - 1} &\int_{i h}^{(i + 1) h} \big| g (X_s) - g(X_{ih}) \big| ds \\&+ \int_{\lfloor T / h \rfloor h}^T | g (X_s) | ds + \big| \ell (X_T) - \ell (X_{ \lfloor T / h \rfloor h}) \big| \xrightarrow{\quad} 0
\end{align*}
uniformly on compacts as \(h \searrow 0\).
Hence, \(f^h \to f\) b-uc as \(h \searrow 0\). 

As in Example~\ref{ex: CRR}, let \(\Pi_h( \lambda, x, dy)\) be the distribution of \eqref{ex: 2.6, rade} with \(b \equiv 0\) and \(\xi\) such that \(\xi = \pm 1\) with equal probability.
It is straightforward to check that the standing assumptions and Condition~\ref{cond: main1} are satisfied. Thus, we can apply Theorem~\ref{theo: main1} and conclude that 
\begin{align*}
	\sup_{P \in \cC^{h} (x_0)} E^P \big[ f^h (\on{pr}_{\Omega^1}) \big] \to \pi (f), \quad h \searrow 0,
\end{align*}
where \(\on{pr}_{\Omega^1} \colon \Omega^2 \to \Omega^1\) denotes the projection to the second coordinate, i.e., \(\on{pr}_{\Omega^1}  (\omega^1, \omega^2) = \omega^2\) for all \(\omega = (\omega^1, \omega^2) \in \Omega^2\).

It remains to show that this coincides with the formula \eqref{eq: approx formula}.
We deduce from \cite[Theorems~2.2.3, 2.3.7]{bauerle_rieder} (or see \cite[Propositions~8.1, 8.2]{bershre}) that  
\begin{align*}
	\sup_{P \in \cC^{h} (x_0)} E^P \big[ f^h (\on{pr}_{\Omega^1}) \big] = S_h^{\lfloor T / h \rfloor} (\ell) (x_0). 
\end{align*}
This completes the proof.
\end{proof}
	\begin{figure}
				\centering
		\includegraphics[scale=.45]{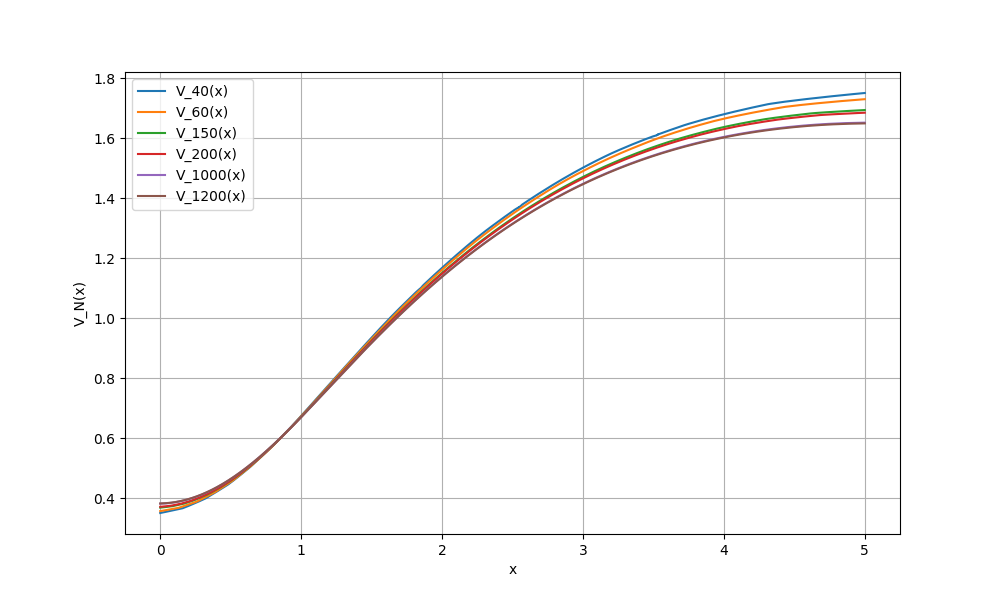} 
		\caption{Numerical experiment}
		\label{fig: num_ex}
	\end{figure}
	\begin{numexperiment}
		We consider the unit time horizon \(T = 1\), running payoff \(g \equiv 0\) and a terminal ``cut-off type call payoff'' \(\ell (x) := \min \{ \max \{ x - 0.5, 0\}, 20\}\). For the market model, we take 
		\[
	 a_* (x) := \max \{ \min \{ x, 30 \}, 1 \}, \quad a^* (x) := \big(\max \{ \min \{ x, 30 \}, 1 \} \big)^2.
		\] 
		Notice that 
		\begin{align*}
			[a_* (x), a^* (x)] = \Big\{  \max \{ \min \{ x, 30 \}, 1 \}^\lambda \colon \lambda \in [1, 2] \Big\},
		\end{align*}
		which means we consider a type of cut-off CEV model with uncertainty in the power parameter (in the classical CEV model power parameters larger than \(1\) occur for example in commodity markets; for instance, \cite[p.~83]{GS07} find a parameter around \(1.4\) for copper and gold). 
		Figure~\ref{fig: num_ex} shows numerical approximations of the iterations \(V_N (x) := S^N_{1/N} (x)\) for \(N = 40, 60, 150, 200, 1000, 1200\) computed on the grid \((0, 5, 5000 \text{ increments})\). It illustrates that the iteration convergences. Indeed, the last two curves have almost no visible difference.
	\end{numexperiment}
\begin{remark}
	The formula \eqref{eq: approx formula} resembles numerical schemes for Hamilton--Jacobi--Bellman PDEs as studied for example in \cite{Camilli_Falcone,Menaldi_89}. To be more specific, it corresponds to the time discretization scheme from \cite[Remark~2.2]{Camilli_Falcone}. Convergence rates of such schemes were studied in~\cite{Barles_Jakobsen}.
	
	Further, it is also interesting to make the connection to Chernoff approximations for convex semigroups that were considered in \cite{BKN23}.
	In principle, the Chernoff approximation provides the formula \eqref{eq: approx formula} for a terminal payoff \(f = \ell (X_T)\). A recent study of convergence rates can be found in \cite{blessing_etal_23}.
\end{remark}

\subsection{Convergence of Robust Superhedging Prices} \label{sec: app2 conv SHP}
A natural question in the context of market model convergence is whether it preserves some structure. 
For single prior models such properties were discussed in \cite{DP92} or Section~6.4 from \cite{BK}. To give an example, it is known that binomial approximations lead to the convergence of price processes, cf. \cite[Proposition~6.4.1]{BK}. In this section, we establish a similar result for our robust framework.

We consider a binomial approximation in the spirit of Example~\ref{ex: BinM}. Let \(b^*, b_* \colon \bR \to \bR\) and \(a^*, a_* \colon \bR \to (0, \infty)\) be Lipschitz continuous functions such that, for a constant \(C > 0\), 
\[
- C \leq b_* \leq b^* \leq C, \quad \frac{1}{C} \leq a_* \leq a^* \leq C.
\]
We consider the action space
\begin{align*}
	\Lambda := [0, 1] \times [0, 1] \times [0, 1] \times [0, 1],
\end{align*}
and, for \((\lambda = (\lambda_1, \dots, \lambda_4), x) \in \Lambda \times \bR\) and \(h > 0\), we define 
\begin{align*}
	\Phi_h (\lambda, x, dy) :=p (\lambda, x) \, &\delta_{x + u_h (\lambda, x)} (dy) + (1 - p (\lambda, x))\, \delta_{x + d_h (\lambda, x)} (dy), 
\end{align*}
where 
\begin{align*}
	p (\lambda, x) &:= \frac{\sqrt{a_* (x)  + \lambda_4 (a^* (x) - a_* (x))}}{\sqrt{a_* (x)  + \lambda_4 (a^* (x) - a_* (x))} + \sqrt{a_* (x)  + \lambda_3 (a^* (x) - a_* (x))}}, \\
	u_h (\lambda, x) &:=  h\, ( b_* (x) + \lambda_1 (b^* (x) - b_* (x))) + \sqrt{h} \, \sqrt{a_* (x)  + \lambda_3 (a^* (x) - a_* (x))}, \phantom \int
	\\
	d_h (\lambda, x) &:=  h\, ( b_* (x) + \lambda_2 (b^* (x) - b_* (x))) -  \sqrt{h} \, \sqrt{a_* (x) + \lambda_4 (a^* (x) - a_* (x))}. \phantom \int
\end{align*}
For a fixed initial value \(x_0 \in \bR\), let \(\mathscr{G}^h\) be the set of all probability measures \(P\) on \((\Sigma^1, \cG^1)\) such that 
\begin{align*} 
	P \Big( \underset{k = 0}{\overset{\infty} \Cross}  d \omega_k \Big) = \delta_{x_0} (d \omega_0) \, \prod_{k = 1}^\infty P^k (\omega_0, \dots, \omega_{k - 1}, d \omega_k), 
\end{align*}
where \(P^n\) is a measurable selector of \((x_1, \dots, x_{n - 1}) \mapsto \{\Phi_h (\lambda, x_{n - 1}, dy) \colon \lambda \in \Lambda\}\). 
We now define the corresponding discrete robust superhedging price. 

For a set \(\mathscr{P}\) of probability measures, we say that a property holds {\em \(\mathscr{P}\)-quasi surely}, written \(\mathscr{P}\)-q.s., if it holds outside a \(\mathscr{P}\)-polar set, i.e., outside a set \(A\) that satisfies \(\sup_{P \in \mathscr{P}} P (A) = 0\).
Let \(\mathcal{H}_{\on{d}}\) be the set of all predictable processes \(H = (H_k)_{k = 1}^\infty\) on the setup \((\Sigma^1, \cG^1, (\cG^1_n)_{n = 0}^\infty)\).

As in the previous section, to ease our presentation, we restrict our attention to the payoff function \(f\) from \eqref{eq: payoff f}. Let \((f^h)_{h > 0}\) be the approximation payoff functions as in \eqref{eq: payoff approx} defined on the discrete time path space \(\Sigma^1\), i.e., 
\begin{align*}
	f^h := h\sum_{i = 0}^{\lfloor T / h \rfloor - 1} g(Y_i) + \ell (Y_{\lfloor T / h \rfloor}).
\end{align*}
The discrete robust superhedging price is given by 
\begin{align*}
	\pi_{\on{d}}^h (f^h) := \inf \Big\{ x \in \bR \colon \exists \, H &\in \mathcal{H}_{\on{d}} \text{ with } 
	x + \sum_{k = 1}^{\lfloor T / h \rfloor - 1} H_k (Y_k - Y_{k -1}) \geq f^h \ \, \mathscr{G}^h\text{-q.s.} \Big\}.
\end{align*}
Moreover, let \(\pi (f)\) be the continuous robust superhedging price from \eqref{eq: superhedging duality}.
We now have the following convergence result.
\begin{theorem} \label{theo: conv superhed}
As \(h \searrow 0\),
\[
\pi^h_{\on{d}} (f^h) \to \pi (f). 
\]
\end{theorem}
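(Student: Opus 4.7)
The approach is to pass to dual representations on both sides and apply Theorem~\ref{theo: main1}(iii). For the continuous side, the robust superhedging theorem \cite[Theorem~3.2]{nutz15}, combined with the extension of \cite[Corollary~2.13]{CN_MAFE} to infinite horizon used in the proof of Proposition~\ref{prop: computation RSP}, gives
\[
\pi(f) = \sup_{Q \in \mathscr{M}} E^Q[f].
\]
The set $\mathscr{M}$ coincides with $\cD(x_0)$ in our framework under the zero-drift parameterization $b \equiv 0$, $\sigma(\lambda, x) := \sqrt{a_*(x) + \lambda(a^*(x) - a_*(x))}$, with $\Lambda' := [0,1]$.

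For the discrete side, I would establish the analogous duality
\[
\pi^h_{\on{d}}(f^h) = \sup_{Q \in \mathscr{M}^h_{\on{d}}} E^{Q}[f^h],
\]
where $\mathscr{M}^h_{\on{d}}$ denotes the set of probability measures on $(\Sigma^1, \cG^1)$ whose one-step conditional distributions are two-point martingale measures supported on $\{Y_{k-1} + u_h(\lambda, Y_{k-1}),\, Y_{k-1} + d_h(\lambda, Y_{k-1})\}$ for some measurable selector $\lambda$, i.e.\ with weight $q_h(\lambda, x) := -d_h(\lambda, x)/(u_h(\lambda, x) - d_h(\lambda, x))$ on the up-move. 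For small $h$, the uniform bounds on $b_\pm, a_\pm$ force $u_h > 0 > d_h$ and hence $q_h \in (0,1)$, so the local no-arbitrage condition is automatic. Since the primal family $\{\Phi_h(\lambda, x, \cdot) : \lambda \in \Lambda\}$ is non-convex (mixtures of two-point distributions are generally not two-point), \cite[Theorem~4.9]{BN15} does not apply directly, and I would re-derive the duality by solving each one-period superhedging problem explicitly via the binomial two-point structure and then iterating through the dynamic programming and measurable-selection machinery of \cite[Theorem~4.5]{BN15}.

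With both dualities at hand, the task reduces to proving $\sup_{\mathscr{M}^h_{\on{d}}} E^{Q}[f^h] \to \sup_{\mathscr{M}} E^Q[f]$, which I would obtain from Theorem~\ref{theo: main1}(iii) after augmenting to two dimensions to meet Standing Assumption~\ref{SA: 3}. Define
\[
\tilde\Phi_h(\lambda, \tilde x, \cdot) := q_h\, \delta_{(\tilde x_1 + \alpha_h,\, \tilde x_2 + u_h)} + (1 - q_h)\, \delta_{(\tilde x_1 + \beta_h,\, \tilde x_2 + d_h)},
\]
with $\alpha_h := \sqrt{h(1-q_h)/q_h}$ and $\beta_h := -\sqrt{h q_h/(1-q_h)}$, chosen so that the first coordinate is itself a two-point martingale with variance exactly $h$ and is perfectly correlated with the second coordinate. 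A direct computation shows that first-order moments vanish and
\[
\tfrac{1}{h} \int (y - \tilde x)(y - \tilde x)^{\top} \tilde\Phi_h(\lambda, \tilde x, dy) \longrightarrow \begin{pmatrix} 1 & \sqrt{\chi(\lambda, x_2)} \\ \sqrt{\chi(\lambda, x_2)} & \chi(\lambda, x_2) \end{pmatrix},
\]
where $\chi(\lambda, x_2) := \sqrt{(a_*(x_2) + \lambda_3(a^*(x_2) - a_*(x_2)))(a_*(x_2) + \lambda_4(a^*(x_2) - a_*(x_2)))}$ ranges over $[a_*(x_2), a^*(x_2)]$. Thus Standing Assumption~\ref{SA: 3} is met with target coefficients $b \equiv 0$, $\sigma := \sqrt{\chi}$, and Condition~\ref{cond: main1} is inherited from the Lipschitz hypothesis on $a_\pm$. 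The image of $\tilde\Phi_h$ under projection to the second coordinate coincides with $\mathscr{M}^h_{\on{d}}$, and $f^h \to f$ b-uc follows from the Arzel\`a-Ascoli argument in the proof of Proposition~\ref{prop: computation RSP}. Since $f$ is continuous on $\Omega^1$, Theorem~\ref{theo: main1}(iii) yields the desired convergence.

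The principal obstacle is the discrete duality of the second paragraph: although no-arbitrage is automatic, the non-convexity of the primal family prevents a direct appeal to \cite{BN15} and forces a careful adaptation of their one-period superhedging lemma and the subsequent measurable-selection arguments to the binomial two-point setting; the remaining steps, in particular the verification of Standing Assumption~\ref{SA: 3} for $\tilde\Phi_h$, are essentially computational.
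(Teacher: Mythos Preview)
Your strategy matches the paper's: pass to dual worst-case expectations on both sides, lift the one-dimensional binomial martingale family to a two-dimensional kernel meeting Standing Assumption~\ref{SA: 3}, and apply Theorem~\ref{theo: main1}(iii). Two execution points differ and are worth noting.

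For the discrete duality, the paper does not rebuild the one-period theory from scratch. Instead it observes that a family \(\mathscr{K}\) and its convex hull share the same polar sets (hence the same quasi-sure inequalities), and then proves---by an explicit mass-redistribution algorithm on two-point measures---that the martingale measures dominated by \(\on{co}(\{\Phi_h(\lambda, x, \cdot):\lambda\in\Lambda\})\) are precisely the convex hull of those dominated by \(\{\Phi_h(\lambda, x, \cdot):\lambda\in\Lambda\}\). This permits a direct appeal to \cite[Lemmas~4.10, 4.13]{BN15} in the convexified setting, with the conclusion transferred back; no re-derivation of the binomial superhedging lemma is needed.

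For the projection step you assert that the second-coordinate image of the two-dimensional uncertainty set \emph{coincides} with \(\mathscr{M}^h_{\on{d}}\). The paper is more cautious: it records only the inclusion \(\mathscr{M}^h_{\on{d}} \subset \{Q \circ \on{pr}^{-1} : Q \in \ccC^h(x_0)\}\), remarks that the converse ``seems not to hold in general'' because two-dimensional controls may depend on the auxiliary first coordinate, and then closes the gap at the level of suprema via dynamic programming \cite[Theorem~2.2.3]{bauerle_rieder}, producing for each two-dimensional rule a \(Q^* \in \mathscr{M}^h_{\on{d}}\) with the same expected payoff. In your particular construction the auxiliary coordinate is in fact adapted to the second (the joint increment is a deterministic function of \(\lambda\), the state, and the up/down outcome), so your equality claim can be justified by induction---but that argument is absent from your proposal, and the paper's route via dynamic programming sidesteps the issue entirely.
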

\begin{proof}
	The tactic is to use robust superhedging dualities and to investigate the convergence of the corresponding worst-case expectations. 
	
	Recall from the proof of Proposition~\ref{prop: computation RSP} that 
	\[
	\pi (f) = \sup_{Q \in \mathscr{M}} E^Q \big[ f \big], 
	\]
	where \(\mathscr{M}\) is given by \eqref{eq: concrete}. In the following we establish a similar representation for the discrete robust superhedging price.
	We define the (now discrete time) set of equivalent martingale measures by 
	\begin{align*}
		\mathscr{M}^h_{\on{d}} := \Big\{ Q \in \mathcal{P} (\Sigma^1) \colon \exists \, P \in \mathscr{G}^h \text{ with } P \sim_\textup{loc} Q \text{ and \(Y\) is a \(Q\)-martingale} \Big\}.
	\end{align*}
	The following is a version of the robust superhedging theorem from \cite{BN15}. Notice that we cannot apply the result from \cite{BN15} directly, as our domain of uncertainty is not convex. Still, the arguments from \cite{BN15} can be adapted to our setting. A proof of the following lemma is given after the proof of Theorem~\ref{theo: conv superhed} is complete.
\begin{lemma} \label{lem: superhedging duality discrete}
	For \(h > 0\) small enough, 
	\begin{align}\label{eq: superhedging duality discrete}
		\pi^h_{\on{d}} (f^h) = \sup_{Q \in \mathscr{M}^h_{\on{d}}} E^Q \big[ f^h \big].	
		\end{align}
		\end{lemma} 
	Our next goal is to get a tractable description of the right hand side in \eqref{eq: superhedging duality discrete}.
	Define the set \(\ccC^h (x_0)\) as in Section~\ref{sec: MC Di} with
	\begin{align*}
	\Pi_h (\lambda, x, dy) &:= \frac{d_h (\lambda, x_2)}{ d_h (\lambda, x_2) - u_h (\lambda, x_2) } \, \delta_{x + (-v (\lambda, x_2) h /d_h (\lambda, x_2) , u_h (\lambda, x_2))} (dy) 
	\\&\quad\qquad+ \frac{u_h (\lambda, x_2)}{u_h (\lambda, x_2) - d_h (\lambda, x_2)}\, \delta_{x + (-v (\lambda, x_2) h / u_h (\lambda, x_2), d_h (\lambda, x_2))} (dy),
	\end{align*} 
	where \(h > 0\), small enough, \((\lambda = (\lambda_1, \dots, \lambda_4), x = (x_1, x_2)) \in \Lambda \times \bR^2\) and 
	\[
	v (\lambda, x_2) := \Big[ (a_* (x_2)  + \lambda_3 (a^* (x_2) - a_* (x_2))) (a_* (x_2)  + \lambda_4 (a^* (x_2) - a_* (x_2))) \Big]^{\frac{1}{4}}.
	\]
	Here, we emphasize that the setting is now two-dimensional.
			
			Recall that \(\on{pr}_{\Omega^1} \colon \Omega^2 \to \Omega^1\) denotes the projection to the second coordinate.
	It is not hard to see that 
	\begin{align} \label{eq: inclusion}
	\mathscr{M}^h_{\on{d}} \subset \Big\{ Q \circ \on{pr}_{\Omega^1}^{-1} \colon Q \in \ccC^h (x_0) \Big\},
	\end{align}
	and consequently, 
	\begin{align*} 
	\sup_{Q \in \mathscr{M}^h_{\on{d}}} E^Q \big[ f^h \big] \leq \sup_{Q \in \ccC^h (x_0)} E^Q \big[ f^h (\on{pr}_{\Omega^1}) \big].
	\end{align*} 
	The converse inclusion in \eqref{eq: inclusion} seems not to hold in general. 
	Roughly speaking, the difference between the sets in \eqref{eq: inclusion} are the admissible controls, which are allowed to have more flexibility for the right hand set. On the level of the worst-case expectations, this flexibility is not crucial. 
	To wit, by \cite[Theorem~2.2.3]{bauerle_rieder} (or see \cite[Proposition~8.1]{bershre}), for every \(Q \in \ccC^h (x_0)\) there exists a measure \(Q^* \in \mathscr{M}^h_{\on{d}}\) such that 
	\begin{align*}
		 E^{Q^*} \big[ f^h\big] =E^Q \big[ f^h (\on{pr}_{\Omega^1}) \big].
	\end{align*} In summary, 
	\begin{align*}
			\sup_{Q \in \mathscr{M}^h_{\on{d}}} E^Q \big[ f^h \big] = \sup_{Q \in \ccC^h (x_0)} E^Q \big[ f^h (\on{pr}_{\Omega^1}) \big].
	\end{align*}
	Notice that 
	\begin{align*}
		\int (x_i - y_i) \, \Pi_h (\lambda, x, dy) = 0, \quad i = 1, 2, 
	\end{align*}
	and that 
	\begin{align*}
	\frac{1}{h}	\int (x_1 - y_1)^2\, \Pi_h (\lambda, x, dy) &= \frac{- [v (\lambda, x_2)]^2}{ w_h (\lambda, x_2)} \xrightarrow{h \, \searrow \, 0} 1,
	\\ 
	\frac{1}{h}		\int (x_1 - y_1)(x_2 - y_2)\, \Pi_h (\lambda, x, dy) &= v (\lambda, x_2), \\
	\frac{1}{h}		\int (x_2 - y_2)^2\,  \Pi_h (\lambda, x, dy) &= - \frac{u_h (\lambda, x) \, d_h (\lambda, x)}{h}  \xrightarrow{h \, \searrow \, 0} \big[ v (\lambda, x_2) \big]^2,
	\end{align*}
	where
	\begin{align*}
	w_h (\lambda, x_2) :=  \big[\sqrt{h}\, ( &b_* (x) + \lambda_1 (b^* (x) - b_* (x))) + \sqrt{a_* (x)  + \lambda_3 (a^* (x) - a_* (x))}\, \big] \\&\cdot \big[  \sqrt{h} \, ( b_* (x) + \lambda_2 (b^* (x) - b_* (x))) - \sqrt{a_* (x)  + \lambda_4 (a^* (x) - a_* (x))}\, \big].
	\end{align*}
	Consequently, straightforward considerations show that Theorem~\ref{theo: main1} implies
	\begin{align*}
	\sup_{Q \in \ccC^h (x_0)} E^Q \big[ f^h (\on{pr}_{\Omega^1}) \big] \to \sup_{Q \in \mathscr{M}} E^Q \big[ f \big], \quad h \searrow 0.
	\end{align*} 
	This proves our claim.
\end{proof}

\begin{proof}[Proof of Lemma~\ref{lem: superhedging duality discrete}]
		For a set \(G\), we denote its convex hull by \(\on{co} \, (G)\). 
		Moreover, for a probability measure \(Q\) and a set \(\mathscr{K}\) of probability measure on the same underlying measurable space, we write \(Q \lll \mathscr{K}\) if there exists a \(P \in \mathscr{K}\) such that \(Q \ll P\).
		
		Let us start with two observations. 
	
	\smallskip
	{\em Observation 1:} 
	For any set \(\mathscr{K}\) of probability measures and any bounded universally measurable function~\(g\), it is easy to see that 
	\begin{align*}
		\sup_{Q \in \mathscr{K}} E^Q \big[ g \big] = \sup_{Q \in \on{co} (\mathscr{K})} E^Q \big[ g \big].
	\end{align*}
	As a consequence, \(\mathscr{K}\) and \(\on{co} (\mathscr{K})\) have the same polar sets. 
	
	\smallskip
	{\em Observation 2:} For \(d_* \leq d^* < 0 < u_* \leq u^*\), define 
		\begin{align*}
		\mathscr{P} := \Big\{\, p\, \delta_{u} + (1 - p) \, \delta_{d}  \colon 0 < p = p(u, d) < 1, \
		 u \in \big [ u_*, u^* \big], \,
		d \in \big[ d_*, d^* \big]  \Big\},
	\end{align*}
	and 
	\begin{align*}
		\mathscr{Q} &:=	\Big\{ Q \in \mathcal{P} (\bR) \colon Q \lll \mathscr{P} \text{ and } \int_{- \infty}^\infty x \, Q (dx) = 0 \Big\}, \\
		\mathscr{Q}^{\on{co}} &:=	\Big\{ Q \in \mathcal{P} (\bR) \colon Q \lll \on{co} \, (\mathscr{P}) \text{ and } \int_{- \infty}^\infty x\, Q (dx) = 0 \Big\}.
	\end{align*}
	The set \(\mathscr{P}\) is not defined in an unambiguous manner, as we do not describe how the weight \(p(u, d)\) depends on \(u\) and \(d\). The crucial properties we need are that the points \(u\) and \(d\) are taken from a product set and that both Dirac measures are ``active'' in the sense that they contribute to the support.
	We claim that
	\begin{align} \label{eq: identity convex hull}
	\on{co} \, (\mathscr{Q}) = \mathscr{Q}^{\on{co}}.
	\end{align} 
	The inclusion \(\subset\) is easy to see. To prove the converse, take \(Q \in \mathscr{Q}^{\on{co}}\), i.e., 
	\[
	Q = \sum_{i = 1}^n \Big( \lambda_i \delta_{u_i} + \mu_i \delta_{d_i}\Big), \quad \int_{- \infty}^\infty x \, Q(dx) = 0, 
	\]
	with
	\[
	\lambda_i, \mu_i > 0, \ \sum_{i = 1}^n (\lambda_i + \mu_i) = 1, \ u_1 \leq u_2 \leq \dots  \leq u_n < 0 < d_n \leq d_{n - 1} \leq \dots \leq d_1.
	\]
	We now use the following algorithm: Take \((u_1, d_1)\) and \((\lambda_1, \mu_1)\). 
\begin{enumerate}
\item[-]	If \(\lambda_1 u_1 + \mu_1 d_1 = 0\), set \[Q_1 := \frac{\lambda_1 \delta_{u_1} + \mu_1 \delta_{d_1}}{\lambda_1 + \mu_1}.\]
Then, proceed with \((u_2, d_2)\) and \((\lambda_2, \mu_2)\).
\item[-]
	If  \(\lambda_1 u_1 + \mu_1 d_1 < 0\), take \(\tilde{\mu}_1 := - \lambda_1 u_1 / d_1 \in (0, \mu_1)\) such that \(\lambda_1 u_1 + \tilde{\mu}_1 d_1 = 0\) and set 
	\[Q_1 := \frac{\lambda_1 \delta_{u_1} + \tilde{\mu}_1 \delta_{d_1}}{\lambda_1 + \tilde{\mu}_1}.\]
	Then, proceed with \((u_2, d_1)\) and \((\lambda_1, \mu_1 - \tilde{\mu}_1)\).
\item[-]
	If  \(\lambda_1 u_1 + \mu_1 d_1 > 0\), take \(\tilde{\lambda}_1 := - \mu_1 d_1 / u_1 \in (0, \lambda_1)\) such that \(\tilde{\lambda}_1 u_1 + \mu_1 d_1 = 0\) and set 
	\[Q_1 := \frac{\tilde{\lambda}_1 \delta_{u_1} + \mu_1 \delta_{d_1}}{\tilde{\lambda}_1 + \mu_1}.
	\]
	Then, proceed with \((u_1, d_2)\) and \((\lambda_1 - \tilde{\lambda}_1, \mu_2)\).
\end{enumerate}
Because \(\int_{- \infty}^\infty x\, Q (dx) = 0\), this algorithm produces a representation
\[
Q = \sum_{i = 1}^m \gamma_i \, Q_i, \quad m \geq n, \ \gamma_i \geq 0, \ \sum_{i = 1}^m \gamma_i = 1, \ Q_i \in \mathscr{Q},
\] 
and consequently, we get that \(Q \in \on{co} \, (\mathscr{Q})\). This finishes the proof of \eqref{eq: identity convex hull}.

\smallskip 
By virtue of \cite[Theorem~4.5]{BN15}, the ``no arbitrage'' condition from \cite[Definition~1.1]{BN15} holds for the uncertainty set build with \(\on{co} \, (\{\Phi_h (\lambda, x, dy) \colon \lambda \in \Lambda\})\) instead of \(\{\Phi_h (\lambda, x, dy) \colon\) \(\lambda \in \Lambda\}\). Consequently, the prerequisites of \cite[Lemma~4.10]{BN15} are satisfied (for the ``convexified setting''). Using both of the above observations, we can apply \cite[Lemma~4.10]{BN15} also to our non-convex setting. Then, arguing as in the proof of \cite[Lemma~4.13]{BN15}, the robust superhedging duality follows also for our discrete market model. We omit the details for brevity.
\end{proof} 
 
\section{Martingale and Control Representations} \label{sec: MP relation}

In the first part of this section we explain equivalent characterizations of the uncertainty sets \(\ccP^h\) and \(\cD\) that adapt ideas from \cite{C23b}. In the second part we recall the relaxed control representation of the set \(\cD\) that was established in \cite{CN23a}.
These observations are used in the following proof-sections. We stress that our Standing Assumptions~\ref{SA: 1} -- \ref{SA: 3} are in force in this section.

\subsection{Martingale Characterizations} \label{sec: characterization 1}

The pregenerator of a Markov chain with transition kernel \(\Pi_h (\lambda, \, \cdot\,, \, \cdot\,)\) is given by 
\[
A_h (f) (\lambda, x) := \int (f (y) - f (x))\, \Pi_h (\lambda, x, dy),
\]
for \(f \in C_b (\bR^{d}; \bR)\) and \((\lambda, x) \in \Lambda \times \bR^{d}\). We restrict ourselves to a countable class of test functions that is given by
\begin{align*} 
	\mathcal{T}^d := \Big\{ \sin ( \langle u, \, \cdot\,\rangle) \colon u \in \mathbb{Q}^{d} \Big\} \cup  \Big\{ \cos (\langle u, \, \cdot \,\rangle) \colon u \in \mathbb{Q}^{d} \Big\}\subset C^2_b (\bR^{d}; \bR).
\end{align*}
Let \((\t_1, \t_2, \dots )\) be an enumeration of \(\mathcal{T}^d\) and define 
\begin{align*}
	\Xi^h (x) := \Big\{ \big( A_h (\t_k) (\lambda, x) \big)_{k = 1}^\infty \colon \lambda \in \Lambda \Big\} \subset \bR^\N
\end{align*}
for \(h > 0\) and \(x \in \bR^{d}\). 
For a sufficiently integrable process \(Z = (Z_n)_{n = 0}^\infty\), we set
\[
\A^P_0 (Z) := 0, \quad \A^P_n (Z) := \sum_{i = 1}^n E^P \big[ Z_i - Z_{i-1} \mid \cG_{i - 1} \big], \ \ n \geq 1.
\]
Evidently, the process \(Z - \A^P (Z)\) is a local martingale, meaning that \(\A^P (Z)\) is the predictable compensator of \(Z\).

Now, for \(x \in \bR^m\), we define 
\begin{align*}
	\cR^h (x) := \Big\{ P \in \mathcal{P} (\Sigma^d) \colon & P (Y_0 = (0_r, x)) = 1, P\text{-a.s. for all } k \in \N \\& \big( \A^P_{k} (\t_n (Y)) - \A^P_{k-1}(\t_n(Y)) \big)_{n = 1}^\infty \in \Xi^h (Y_{k - 1}) \, \Big\}. 
\end{align*}
The set \(\cR^h (x)\) admits a feedback control representation, which explains its connection to \(\cC^h (x)\).
 
	Using Filippov's implicit function theorem (\cite[Theorem~18.17]{charalambos2013infinite}), it can be shown that for every \(P \in \cR^h (x)\) there exists a \(\Lambda\)-valued adapted process \(\f = (\f_n)_{n = 0}^\infty\) such that, for all \(k \in \mathbb{N}\), the process
	\[
	\Big( \t_k (Y_n) - \sum_{i = 0}^{n - 1} A_h (\t_k) (\f_i, Y_i) \Big)_{n = 0}^\infty 
	\]
	is a \(P\)-martingale, cf. the proof of \cite[Proposition~2.4]{C23b} for some details. Consequently, by virtue of the definition of \(\mathcal{T}^d\), it follows that \(P\)-a.s.
	\begin{align*}
		E^P \Big[ e^{\mathsf{i} \langle u, Y_k \rangle } \mid \cG_{k - 1} \Big] = \int \, e^{\mathsf{i} \langle u, y\rangle}\, \Pi_h (\f_{k- 1}, Y_{k - 1}, dy)
	\end{align*}
	for all \(k \in \mathbb{N}\) and \(u \in \mathbb{Q}^{d}\). Here, \(\mathsf{i}\) denotes the imaginary number.
	A density argument shows that this identity also holds for \(u \in \bR^{d}\) instead of \(u \in \mathbb{Q}^{d}\). In other words, \(P\)-a.s.
	\begin{align} \label{eq: formula cond prob}
		P ( Y_k \in dy \mid \cG_{k - 1} ) = \Pi_h (\f_{k - 1}, Y_{k - 1}, dy), \quad k \in \mathbb{N}.
	\end{align}
As a corollary of this discussion, we immediately get the following:
\begin{lemma} \label{coro: connection}
	\(\ccP^h (x) = \cR^h (x)\) for all \(h > 0\) and \(x \in \bR^m\).
\end{lemma}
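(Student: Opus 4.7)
The plan is to prove the two inclusions separately, with the heavy lifting for one direction already done in the paragraph immediately preceding the lemma.

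For the inclusion \(\cR^h(x)\subset \ccP^h(x)\), I would simply appeal to the Filippov-type argument sketched in the discussion preceding the lemma. Indeed, for a given \(P\in\cR^h(x)\), that discussion produces a \(\Lambda\)-valued, \((\cG^d_n)\)-adapted process \(\f=(\f_n)_{n=0}^\infty\) with \(P\)-a.s.\ \(P(Y_k\in dy\mid \cG^d_{k-1}) = \Pi_h(\f_{k-1},Y_{k-1},dy)\) for every \(k\in\mathbb{N}\). Setting \(P^0:=\delta_{(0_r,x)}\) and, via regular conditional distributions on the Polish space \(\bR^d\), \(P^k(\omega_0,\dots,\omega_{k-1},d\omega_k):=\Pi_h(\f_{k-1}(\omega_0,\dots,\omega_{k-1}),\omega_{k-1},d\omega_k)\), this is a (universally) measurable selector of \(\psi^h(\omega_{k-1})\) and the Ionescu--Tulcea-type product representation \eqref{eq: P prod} holds, so \(P\in\ccP^h(x)\).

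For the reverse inclusion \(\ccP^h(x)\subset\cR^h(x)\), take \(P\in\ccP^h(x)\) with decomposition as in \eqref{eq: P prod}. By construction \(P(Y_0=(0_r,x))=1\). For each \(\omega\) and each \(k\), the selector property gives some (not necessarily measurable) \(\lambda_k(\omega)\in\Lambda\) with \(P^k(\omega_0,\dots,\omega_{k-1},\cdot)=\Pi_h(\lambda_k(\omega),Y_{k-1}(\omega),\cdot)\). A direct computation of the compensator increment yields, \(P\)-a.s.,
\[
\A^P_k(\t_n(Y)) - \A^P_{k-1}(\t_n(Y)) = \int\bigl(\t_n(y)-\t_n(Y_{k-1})\bigr)\,\Pi_h(\lambda_k,Y_{k-1},dy) = A_h(\t_n)(\lambda_k,Y_{k-1}),
\]
so the whole sequence \(\bigl(\A^P_k(\t_n(Y))-\A^P_{k-1}(\t_n(Y))\bigr)_{n=1}^\infty\) equals \(\bigl(A_h(\t_n)(\lambda_k,Y_{k-1})\bigr)_{n=1}^\infty\), which belongs to \(\Xi^h(Y_{k-1})\) by the very definition of \(\Xi^h\). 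Hence \(P\in\cR^h(x)\).

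The only subtle point, and thus the main obstacle, is the selection step in the first inclusion: one must produce a jointly measurable \(\f\) out of the purely pointwise membership \(P(Y_k\in\cdot\mid\cG^d_{k-1})(\omega)\in\psi^h(Y_{k-1}(\omega))\). This is where Filippov's implicit function theorem enters, and it is applicable because Standing Assumption~\ref{SA: 1} makes \((\lambda,x)\mapsto\Pi_h(\lambda,x,\cdot)\) continuous with \(\Lambda\) compact, so the graph of the relevant set-valued map is measurable in the required sense. The analogous continuous-time statement in \cite{C23b} can be cited to keep the argument short.
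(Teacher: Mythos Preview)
Your proposal is correct and follows essentially the same route as the paper: the paper places the Filippov-based argument for \(\cR^h(x)\subset\ccP^h(x)\) in the discussion immediately preceding the lemma and then states the lemma as an immediate corollary, while treating the reverse inclusion as evident. You reproduce this exactly, with the only addition being that you spell out the easy direction \(\ccP^h(x)\subset\cR^h(x)\) explicitly via the compensator computation.
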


Next, we also explain a similar representation of the set \(\cD (x)\).
For a given probability measure \(P\) on \((\Omega^{m}, \cF^{m})\), let \(\Sc (P)\) be the set of all real-valued continuous (in time and path) semimartingales on the stochastic basis \((\Omega^{m}, \cF^{m}, (\cF^{m}_t)_{t \geq 0}, P)\). For \(Z \in \Sc (P)\), let \(\A^P (Z)\) be the unique (up to a \(P\)-null set) predictable compensator of \(Z\), that is presumed to start in the origin by convention.
Let \(\Sac (P)\) be the set of all \(Z \in \Sc (P)\) such that \(\A^P (Z)\) is \(P\)-a.s. absolutely continuous w.r.t. the Lebesgue measure.
For \(f \in C^2 (\bR^{m}; \bR)\), define
\[
L (f) (\lambda, x) := \langle \nabla f (x), b (\lambda, x) \rangle + \tfrac{1}{2} \on{tr} \big[ \nabla^2 f (x) \, \sigma \sigma^* (\lambda, x) \big], \quad (\lambda, x) \in \Lambda \times \bR^m,
\] 
and, for an enumeration \(\s_1, \s_2, \dots\) of the set \(\mathcal{T}^m\), we define
\begin{align*}
	\Psi   (x) := \Big\{ \big( L (\s_k) (\lambda, x) \big)_{k = 1}^\infty \colon \lambda \in \Lambda \Big\} \subset \mathbb{R}^\mathbb{N}, \quad x \in \bR^{m}.
\end{align*}
We have the following:
\begin{lemma}  \label{lem: MP chara d set}
	For every \(x \in \bR^m\), 
	\begin{equation*} 
		\begin{split}
			\cD (x) = \Big\{ P \in \mathcal{P} (\Omega^{m}) \colon & P (X_0 = x) = 1, \, \{ \s_n (X) \colon n \in \N \} \subset \Sac (P), \\ 
			& (\llambda \otimes P)\text{-a.e. } \big( \A^P (\s_k) / d \llambda \big)_{k = 1}^\infty \in \Psi (X) \Big\}.
		\end{split}
	\end{equation*}
\end{lemma}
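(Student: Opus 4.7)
The plan is to prove the two inclusions of the claimed set equality separately, using in both directions Filippov's implicit function theorem \cite[Theorem~18.17]{charalambos2013infinite} to pass between the parameterized sets \(\Theta\), \(\Psi\) and explicit \(\Lambda\)-valued selectors, together with Itô's formula and the exponential-martingale characterization of continuous semimartingales via their characteristics.

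For the forward inclusion ``\(\subset\)'', take \(P \in \cD(x)\). Since \(X\) is a continuous \(P\)-semimartingale with absolutely continuous characteristics \((B^P, C^P)\) satisfying \((dB^P/d\llambda, dC^P/d\llambda) \in \Theta(X_t)\) a.e., Filippov's theorem, applied to the continuous joint parameterization \((\lambda, x) \mapsto (b(\lambda,x), \sigma\sigma^*(\lambda,x))\) and its multivalued image \(\Theta\), produces a predictable \(\Lambda\)-valued selector \(\f\) with \(dB^P/d\llambda = b(\f_t, X_t)\) and \(dC^P/d\llambda = \sigma\sigma^*(\f_t, X_t)\) simultaneously. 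Itô's formula applied to \(\s_k \in \mathcal{T}^m \subset C^2_b(\bR^m;\bR)\) then shows that \(\s_k(X) \in \Sac(P)\) with predictable compensator \(\A^P(\s_k(X))_t = \int_0^t L(\s_k)(\f_s, X_s)\,ds\), hence \((d\A^P(\s_k)/d\llambda)_{k=1}^\infty = (L(\s_k)(\f_t, X_t))_{k=1}^\infty \in \Psi(X_t)\) a.e.

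For the reverse inclusion ``\(\supset\)'', take \(P\) satisfying the right-hand-side conditions. Applying Filippov's theorem to the measurable stochastic process \((d\A^P(\s_k)/d\llambda(t))_{k=1}^\infty\), now seen as a selection of the continuous multifunction \(x \mapsto \Psi(x)\), yields a single predictable \(\Lambda\)-valued process \(\f\) such that \(d\A^P(\s_k)/d\llambda(t) = L(\s_k)(\f_t, X_t)\) holds for every \(k \in \N\) simultaneously. In particular, for each \(u \in \mathbb{Q}^m\), combining the resulting local martingales corresponding to \(\s_k = \cos(\langle u, \cdot\rangle)\) and \(\s_{k'} = \sin(\langle u, \cdot\rangle)\) shows that
\[
M^u_t := e^{\mathsf{i}\langle u, X_t\rangle} - \int_0^t \Big(\mathsf{i}\langle u, b(\f_s, X_s)\rangle - \tfrac{1}{2}\langle u, \sigma\sigma^*(\f_s, X_s) u\rangle\Big) e^{\mathsf{i}\langle u, X_s\rangle}\,ds
\]
is a complex-valued local \(P\)-martingale. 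A density and dominated-convergence argument (the integrand is uniformly bounded in \(u\) on compacts thanks to the boundedness of \(b\) and \(\sigma\) from Standing Assumption~\ref{SA: 3}) extends this local martingale property to every \(u \in \bR^m\). By the standard martingale-problem characterization of continuous semimartingales, this is equivalent to \(X\) being a continuous \(P\)-semimartingale with absolutely continuous characteristics given by \(dB^P/d\llambda = b(\f_t, X_t)\) and \(dC^P/d\llambda = \sigma\sigma^*(\f_t, X_t)\), so \((dB^P/d\llambda, dC^P/d\llambda) \in \Theta(X_t)\) a.e., and hence \(P \in \cD(x)\).

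The main obstacle is in the reverse direction: one must obtain a \emph{single} \(\Lambda\)-valued selector \(\f\) that simultaneously matches the compensator density of \(\s_k(X)\) for every \(k\), rather than a different selector per test function; this is precisely what Filippov delivers when applied to the countable sequence \((L(\s_k))_{k=1}^\infty\) at once, and it is what allows the joint drift/volatility membership in \(\Theta\). Once this selector is in hand, the remainder is a routine transcription of the classical Stroock--Varadhan martingale-problem equivalence, facilitated by the fact that the trigonometric family \(\mathcal{T}^m\) is rich enough to recover the exponential test martingales.
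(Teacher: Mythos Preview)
Your proposal is correct and follows essentially the same route the paper has in mind: the paper declares the forward inclusion ``clear'' and for the converse defers to Section~3.1 of \cite{C23b}, whose argument is precisely the one you sketch---apply Filippov's implicit function theorem to obtain a single predictable \(\Lambda\)-valued selector matching all compensator densities at once, assemble the complex exponentials from the cosine/sine pairs in \(\mathcal{T}^m\), extend from \(\mathbb{Q}^m\) to \(\bR^m\) by density and dominated convergence, and invoke the martingale-problem characterization of continuous semimartingale characteristics.

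One minor observation: in the forward direction you do not actually need Filippov. It\^o's formula applied to \(\s_k(X)\) gives the compensator density directly as \(\langle \nabla \s_k(X_t), dB^P_t/d\llambda\rangle + \tfrac{1}{2}\operatorname{tr}[\nabla^2 \s_k(X_t)\, dC^P_t/d\llambda]\), and since \((dB^P_t/d\llambda, dC^P_t/d\llambda) \in \Theta(X_t)\) means there is \emph{some} \(\lambda \in \Lambda\) realizing this pair, the whole sequence automatically lies in \(\Psi(X_t)\). No measurable selector is required for pointwise set membership; this is why the paper simply calls this inclusion ``clear''. Your argument is not wrong, just slightly more than needed there.
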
 
\begin{proof}
	The inclusion ``\(\subset\)'' is clear. The converse follows as in Section~3.1 from \cite{C23b}. We skip the details for brevity.
\end{proof}

\subsection{Relaxed Control Representation} \label{sec: characterization 2}
The purpose of this section is to recall the relation of the set \(\cD\) to relaxed control rules as used in \cite{nicole1987compactification,EKNJ88,ElKa15}.
	Let \(\m\) to be the set of all Radon measures on \(\bR_+ \times \Lambda\) whose projections to \(\bR_+\) coincide with the Lebesgue measure. 
We endow \(\m\) with the vague topology, which turns it into a compact metrizable space (\cite[Theorem~2.2]{EKNJ88}). The coordinate map on \(\m\) is denoted by \(M\).
Define the \(\sigma\)-field \[\M := \sigma (M_t (\phi); t \in \bR_+, \phi \in C_{c} (\bR_+ \times \Lambda; \bR)),\] where
\[
M_t (\phi) := \int_0^t \int \phi (s, \lambda) M(ds, d\lambda).
\]
Relaxed control rules will be defined as probability measures on the product space \((\Omega^d \times \m, \cF^d \otimes \M)\). Adapting our standard notation, we denote the coordinate process on this space by \((X, M)\). 
For \(f \in C^2_b (\bR^d; \bR)\), define
\begin{align} \label{eq: C martingale relax}
C (f) :=f  (X) - \int_{0}^{\cdot} \int \bar{L} (f) (\lambda, X_s) M (ds , d \lambda),
\end{align} 
with 
\begin{align} \label{eq: bar L}
\bar{L} (f) (\lambda, x) := \langle \nabla f (x), \ob (\lambda, x) \, \rangle + \tfrac{1}{2} \on{tr} \big[ \nabla^2 f (x) \, \oa (\lambda, x) \, \big], \quad (\lambda, x) \in \Lambda \times \bR^d.
\end{align} 
A \emph{relaxed control rule} with initial value \(x \in \bR^m\) is a probability measure \(P\) on the product space \((\Omega^d \times \m, \mathcal{F}^d \otimes \M)\) such that \(P (X_0 = (0_r, x) ) = 1\) and such that the processes \(C (f)\), with \(f \in C^2_b (\bR^d; \bR)\), are \(P\)-martingales for the filtration
\[
\cH_s := \sigma (X_r, M_r (\phi); r \leq s, \phi \in C_c (\bR_+ \times \Lambda; \bR)), \quad s \in \bR_+.
\]
Finally, for \(x \in \bR^m\), we define 
\begin{align*}
	\cK (x)&:= \Big\{ \text{all relaxed control rules with initial value } x \Big \}.
\end{align*}
With little abuse of notation,\footnote{We already defined \(\on{pr}_{\Omega^1}\) as the projection \(\Omega^2 \to \Omega^1\) to the second coordinate.} we define the projection \(\on{pr}_{\Omega^m} \colon \Omega^d \times \m \to \Omega^m\) by 
\[
\on{pr}_{\Omega^m} (\omega, m) := (\omega^{r + 1}, \dots, \omega^d)
\] 
for \((\omega, m) \in \Omega^d  \times \m\) with \(\omega = (\omega^1, \dots, \omega^d)\).

Recalling that Standing Assumption~\ref{SA: 3} is in force, the following representation for the set \(\cD (x)\) is a consequence of \cite[Theorem~5.1]{CN23a}.
\begin{lemma} \label{lem: relaxed controls}
	For all \(x \in \bR^m\), it holds that
	\[
	\cD (x) = \Big\{ P \circ \on{pr}_{\Omega^m}^{-1} \colon P \in \cK (x) \Big\}.
	\]
\end{lemma}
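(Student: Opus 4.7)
The plan is to appeal directly to [CN23a, Theorem~5.1], which provides exactly this kind of relaxed-control representation for our $d$-dimensional extended framework with coefficients $(\ob, \oa)$. Once the relaxed characterization on $\Omega^d \times \m$ is in hand, the identity in the lemma amounts to observing that the map $\on{pr}_{\Omega^m}$ picks out precisely the ``controlled process'' coordinates $Z$ of the bi-variate driver-state pair $(W, Z)$ encoded by $(\ob, \oa)$, so that the projection recovers $\cD(x)$.

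First, I would verify that the hypotheses of [CN23a, Theorem~5.1] are satisfied. Boundedness and continuity of $\ob$ and $\oa$ follow from Standing Assumption~\ref{SA: 3}(i). The essential structural input is Standing Assumption~\ref{SA: 3}(ii): the set $\{(b(\lambda, y), \sigma \sigma^*(\lambda, y)) \colon \lambda \in \Lambda\}$ is convex. This is exactly the convexity condition under which the feedback uncertainty set built from $(b, \sigma)$ agrees with its relaxed convex-compactification at the level of the bi-variate process $(W, Z)$, as formulated in [CN23a].

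For the inclusion ``$\supseteq$'', I would take $P \in \cK(x)$ and test the martingale property of $C(f)$ from \eqref{eq: C martingale relax} against functions $f \in \mathcal{T}^d$ that depend only on the last $m$ coordinates of the state. For such $f$, the operator $\bar{L}(f)$ in \eqref{eq: bar L} only sees the lower-right $m \times m$ block of $\oa$ and the last $m$ entries of $\ob$, namely $\sigma \sigma^*(\lambda, \pi_m(\cdot))$ and $b(\lambda, \pi_m(\cdot))$. A Fubini-type argument applied to the integral against $M$, combined with the martingale characterization from Lemma~\ref{lem: MP chara d set}, then shows that $P \circ \on{pr}_{\Omega^m}^{-1} \in \cD(x)$.

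For the reverse inclusion, I would begin with $P \in \cD(x)$, use Filippov's implicit function theorem (as in Section~\ref{sec: characterization 1}) to extract a $\Lambda$-valued progressively measurable process $\lambda_\cdot$ realizing the prescribed characteristics of $X$, and then enlarge the probability space to carry an $r$-dimensional Brownian motion $W$ coupled to $X$ via $\sigma(\lambda_\cdot, X)$. The joint law of $(W, X)$ on $\Omega^d$ defines a strict control rule with characteristics $(\ob, \oa)$, and lifting $\lambda_\cdot$ to the random measure $M(dt, d\lambda) := \delta_{\lambda_t}(d\lambda)\, dt$ produces a relaxed control rule $\tilde{P} \in \cK(x)$ with $\tilde{P} \circ \on{pr}_{\Omega^m}^{-1} = P$. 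The delicate point here — and the reason we invoke [CN23a, Theorem~5.1] rather than reprove the equality from scratch — is this driver-extension step: one must construct $W$ on an enlargement without disturbing the prescribed characteristics, which relies crucially on the convexity of $\Theta$ together with a mimicking-type argument packaged cleanly by the cited theorem.
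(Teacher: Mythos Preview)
Your proposal is correct and takes essentially the same approach as the paper: both invoke \cite[Theorem~5.1]{CN23a} as the source of the relaxed-control representation, with Standing Assumption~\ref{SA: 3} supplying the required convexity and regularity hypotheses. Your additional sketch of the two inclusions is helpful exposition of what that cited theorem accomplishes, but the paper simply records the lemma as a direct consequence of the citation without reproducing these details.
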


We are in the position to prove our main theorems.

\section{Proof of Theorem~\ref{theo: main1}~(ii)} \label{sec: pf}

The Hausdorff metric convergence can be proved by a similar strategy as continuity of set-valued maps with compact values. Namely, we establish properties that are related to the sequential characterizations of upper and lower hemicontinuity of set-valued maps. Lastly, we will combine these observations to establish the convergence from Theorem~\ref{theo: main1}~(ii). 

\smallskip
Throughout this section, we assume that the Standing Assumptions~\ref{SA: 1} -- \ref{SA: 3} are in force.

\subsection{The ``Upper'' Part} \label{sec: upper}
The main observation in this section is the following.
\begin{proposition} \label{prop: upper hemi}
	Let \((h_n)_{n = 1}^\infty \subset (0, \infty)\) be a sequence such that \(h_n \searrow 0\), and let \((x_n)_{n = 0}^\infty \subset \bR^m\) be a sequence such that \(x_n \to x_0\). Further, take a sequence \((Q^n)_{n = 1}^\infty\) such that \(Q^n \in \cC^{h_n} (x_n)\). 
	Then, the following hold:
	\begin{enumerate}
	\item[\textup{(i)}] 
	The family \(\{Q^n \colon n \in \N\}\) is relatively compact in \(\cP(\Omega^{m})\). 
	\item[\textup{(ii)}]
	Any accumulation point of \(\{ Q^n\colon n \in \mathbb{N}\}\) is contained in \(\cD (x_0)\).
\end{enumerate}
\end{proposition}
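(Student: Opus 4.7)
I would prove Proposition~\ref{prop: upper hemi} by lifting the discrete uncertainty to the relaxed control framework of Section~\ref{sec: characterization 2} and invoking Lemma~\ref{lem: relaxed controls}. This reduces the identification of the limit to verifying the martingale problem characterizing \(\cK(x_0)\), which is robust under weak convergence on the compactified space \(\Omega^d\times\m\).

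\textbf{Lifting and tightness.} Let \(P^n\in \ccP^{h_n}(x_n)\) be the discrete law giving rise to \(Q^n\). As in the derivation preceding Lemma~\ref{coro: connection}, Filippov's implicit function theorem yields a \(\Lambda\)-valued \((\cG^d_k)\)-adapted sequence \((\lambda^n_k)_{k\ge 0}\) with
\[
P^n\bigl(Y_k\in dy\mid \cG^d_{k-1}\bigr) = \Pi_{h_n}(\lambda^n_{k-1},Y_{k-1},dy).
\]
Linearly interpolate \(Y^n\) to a continuous path \(\tilde\X^n\) in \(\Omega^d\), define the piecewise constant control \(\lambda^n_t:=\lambda^n_{\lfloor t/h_n\rfloor}\), and set \(M^n(ds,d\lambda):=ds\otimes\delta_{\lambda^n_s}(d\lambda)\in\m\). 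Let \(\tilde Q^n\) be the joint law of \((\tilde\X^n,M^n)\) on \(\Omega^d\times\m\); by construction \(Q^n = \tilde Q^n\circ \on{pr}_{\Omega^m}^{-1}\). The \(\m\)-marginals are automatically tight since \(\m\) is compact. For the \(\Omega^d\)-marginals, the uniform bounds on \(b_{h_n}, a_{h_n}\) from Standing Assumption~\ref{SA: 2} together with the tail decay \(\Delta^\varepsilon_{h_n}\to 0\) allow a decomposition of the chain into a martingale part plus a compensator of order \(h_n\). Standard fourth-moment or Aldous-type estimates, in the spirit of \cite[Chapter~11]{SV}, then yield a uniform modulus-of-continuity bound for \(\tilde\X^n\) and hence tightness of \(\{\tilde Q^n\}\). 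Part (i) follows by continuous-mapping.

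\textbf{Identification of the limit.} Along any subsequence with \(\tilde Q^{n_k}\to \tilde Q\), I would show \(\tilde Q\in\cK(x_0)\), which by Lemma~\ref{lem: relaxed controls} gives \(Q=\tilde Q\circ \on{pr}_{\Omega^m}^{-1}\in\cD(x_0)\). Fix \(f\in C^2_b(\bR^d;\bR)\). A second-order Taylor expansion around \(Y^n_k\), combined with Standing Assumption~\ref{SA: 3}(iii) (\(b_{h_n}\to\ob\), \(a_{h_n}\to\oa\) locally uniformly) and Standing Assumption~\ref{SA: 2} (the tail control \(\Delta^\varepsilon_{h_n}\to 0\) bounds the third-order remainder), yields
\[
\frac{1}{h_n}\, A_{h_n}(f)(\lambda,x) = \bar L(f)(\lambda,x) + \rho^n(\lambda,x),
\]
with \(\rho^n\to 0\) uniformly on compact subsets of \(\Lambda\times\bR^d\). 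Hence \(f(Y^n_k)-\sum_{j<k} h_n\,\bar L(f)(\lambda^n_j,Y^n_j)\) is a \(P^n\)-martingale up to a uniformly \(o(1)\) error. Rewriting this at the times \(s,t\) of the continuous-time process, this translates into
\[
E^{\tilde Q^{n_k}}\bigl[(C(f)_t-C(f)_s)\,\Phi\bigr] \xrightarrow{\ k\to\infty\ } 0
\]
for any \(\cH_s\)-measurable, bounded continuous \(\Phi\) on \(\Omega^d\times\m\). Since \(\bar L(f)\) is bounded and continuous and the map \((\omega,m)\mapsto \int_0^t\!\!\int \bar L(f)(\lambda,\omega_u)\,m(du,d\lambda)\) is continuous on \(\Omega^d\times\m\) (using that the \(\bR_+\)-marginal of elements of \(\m\) is Lebesgue measure), weak convergence transmits the identity to \(\tilde Q\). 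With the initial-value condition \(\tilde\X^n_0=(0_r,x_n)\to(0_r,x_0)\), this establishes \(\tilde Q\in\cK(x_0)\) and finishes (ii).

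\textbf{Main obstacle.} The principal technical work lies in controlling the Taylor remainder \(\rho^n\) uniformly in \((\lambda,x)\) on compact sets: one splits the integration against \(\Pi_{h_n}(\lambda,x,dy)\) over \(\{\|y-x\|\le\varepsilon\}\) and its complement, bounds the first contribution via the uniform second-moment control from Standing Assumption~\ref{SA: 2}, and absorbs the second via \(\Delta^\varepsilon_{h_n}\to 0\) together with boundedness of \(f\); the interplay with the truncation at scale one in the definitions of \(b_{h_n},a_{h_n}\) requires some care. A secondary subtlety is ensuring that the test functionals \(\Phi\), built from coordinates \(X_r\) and integrals \(M_r(\phi)\) with \(\phi\in C_c\), yield bounded continuous maps on \(\Omega^d\times\m\) so that weak convergence may be applied directly to the martingale identity; this relies on the Lebesgue-marginal constraint defining \(\m\).
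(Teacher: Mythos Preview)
Your proposal is correct and essentially coincides with the paper's second proof of part~(ii), which lifts to the relaxed control space \(\Omega^d\times\m\), passes the discrete martingale identity to the limit via the locally uniform convergence \(h_n^{-1}A_{h_n}(f)\to\bar L(f)\) (this is \eqref{eq: conv gen}, obtained from \cite[Lemma~11.2.1]{SV}), and concludes through Lemma~\ref{lem: relaxed controls}. For (i) the paper invokes the Stroock--Varadhan tightness criterion \cite[Theorem~1.4.11]{SV} directly (combining Lemma~\ref{lem: uni bdd comp dis} with the tail bound \(\Delta^\varepsilon_{h_n}\to 0\)) rather than fourth-moment estimates, and it also records an alternative first proof of (ii) via set-valued analysis and the martingale characterization of Lemma~\ref{lem: MP chara d set} that bypasses relaxed controls entirely.
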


We will first prove part (i) of Proposition~\ref{prop: upper hemi} by means of the Stroock--Varadhan (\cite{SV}) martingale criterion for tightness. For part (ii) we present two different proofs that we think are both of independent interest. The first proof uses arguments from set-valued analysis, while the second proof relies on the relaxed control framework. 

We start with some technical observations. 
Let \(\delta > 0\) be as in Standing~Assumption~\ref{SA: 2} and let \(\theta \in (0, \delta)\) be small enough such that 
\begin{align} \label{eq: theta bounded}
\sup \Big\{ \tfrac{1}{h} \, \Pi_h (\lambda, x, \{y \colon \|y - x\| > 1\}) \colon h < \theta, \, (\lambda, x) \in \Lambda \times \bR^{d} \Big\} < \infty.
\end{align}
Such a constant \(\theta\) exists due to the final part of Standing Assumption~\ref{SA: 2}. 
	Without loss of generality, we always assume that \(h_1 < \theta\). 
	
\begin{lemma} \label{lem: uni bdd comp dis}
	For \(g \in C^2_b (\bR^{d}; \bR)\), there exists a constant \(C = C (g) > 0\) such that 
	\begin{align*}
		\sup \Big\{ \tfrac{1}{h} | A_h (g) (\lambda, x) | \colon h < \theta, \ (\lambda, x) \in \Lambda \times \bR^{d} \Big\}  \leq C. 
	\end{align*}
	Furthermore, we can take the constant such that \(C (g) = C (g + \ell)\) for all \(\ell \in \bR\).
\end{lemma}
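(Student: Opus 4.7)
The plan is to estimate the integrand $g(y) - g(x)$ separately on the small increment region $\{\|y-x\| \leq 1\}$ (via Taylor expansion) and on the large increment region $\{\|y-x\| > 1\}$ (via boundedness of $g$), and to note that in both regimes the bound is linear in $h$ so that $\tfrac{1}{h}|A_h(g)(\lambda,x)|$ stays uniformly controlled.

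More precisely, I would first split
\[
A_h(g)(\lambda, x) = \int_{\|y-x\|\leq 1}(g(y)-g(x))\,\Pi_h(\lambda,x,dy) + \int_{\|y-x\|>1}(g(y)-g(x))\,\Pi_h(\lambda,x,dy).
\]
For the second piece, I would use $|g(y)-g(x)|\leq 2\|g\|_\infty$ together with \eqref{eq: theta bounded}, which yields
\[
\tfrac{1}{h}\Big|\int_{\|y-x\|>1}(g(y)-g(x))\,\Pi_h(\lambda,x,dy)\Big| \leq 2\|g\|_\infty\cdot \sup_{h<\theta,\,\lambda,x}\tfrac{1}{h}\Pi_h(\lambda,x,\{\|y-x\|>1\}),
\]
and the right-hand side is finite by the choice of $\theta$.

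For the first piece, I would apply the second-order Taylor formula $g(y)-g(x)=\langle \nabla g(x), y-x\rangle + \tfrac{1}{2}(y-x)^\top \nabla^2 g(\xi)(y-x)$ for a point $\xi$ on the segment between $x$ and $y$. Integrating the linear term against $\Pi_h(\lambda,x,dy)$ on $\{\|y-x\|\leq 1\}$ produces exactly $h\,\langle \nabla g(x), b_h(\lambda,x)\rangle$, which is bounded by $\|\nabla g\|_\infty\|b_h\|_\infty\,h$; by Standing Assumption~\ref{SA: 2}, $\|b_h\|_\infty$ is uniformly bounded for $h<\delta$. The quadratic term is bounded in absolute value by $\tfrac{1}{2}\|\nabla^2 g\|_\infty\|y-x\|^2$, and integrating $\|y-x\|^2$ over $\{\|y-x\|\leq 1\}$ yields $h\,\operatorname{tr}(a_h(\lambda,x))$, which is again $O(h)$ uniformly by Standing Assumption~\ref{SA: 2}. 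Collecting the constants gives $\tfrac{1}{h}|A_h(g)(\lambda,x)|\leq C(g)$ for some $C(g)$ depending only on $\|g\|_\infty$, $\|\nabla g\|_\infty$, $\|\nabla^2 g\|_\infty$, and on the suprema appearing in Standing Assumption~\ref{SA: 2} together with \eqref{eq: theta bounded}.

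The final invariance claim $C(g) = C(g + \ell)$ for $\ell \in \bR$ is immediate once one observes that $A_h(g+\ell)(\lambda,x) = A_h(g)(\lambda,x)$, since the constant $\ell$ cancels in $(g+\ell)(y)-(g+\ell)(x)$, so the same bound applies verbatim. There is no genuine obstacle here; the only mild care needed is to notice that one must not try to Taylor-expand over the whole of $\bR^d$ (the increments are unbounded under $\Pi_h$), which is precisely why the truncation into $\{\|y-x\|\leq 1\}$ and its complement, mirroring the definitions of $b_h$ and $a_h$, is the natural splitting to carry out.
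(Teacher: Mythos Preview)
Your proof is correct and follows essentially the same route as the paper's: split at \(\|y-x\|=1\), use second-order Taylor on the near part to recover the quantities \(b_h\) and \(a_h\) controlled by Standing Assumption~\ref{SA: 2}, and invoke \eqref{eq: theta bounded} on the far part. Your justification of the translation invariance via \(A_h(g+\ell)=A_h(g)\) is in fact slightly cleaner than the paper's remark that it is ``obvious from the above estimate.''
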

\begin{proof}
	Using Taylor's theorem, we get that 
	\begin{align*}
		| A_h (g) (\lambda, x) | &\leq \int_{\|x - y\| \leq 1} \big| g (y) - g (x) - \langle \nabla g (x), y - x\rangle \big| \Pi_h (\lambda, x, dy) 
		\\&\hspace{2cm}+ \Big| \Big \langle \nabla g (x), \int_{\|x - y\| \leq 1} (y - x) \Pi_h (\lambda, x, dy) \Big \rangle \Big|
		\\&\hspace{2cm}+ \int_{\|x - y\| > 1} | g (y) - g (x)| \Pi_h (\lambda, x, dy)
		\\&\leq_c \int_{\|x - y\| \leq 1} \|y - x\|^2\, \Pi_h (\lambda, x, dy) + \Big\| \int_{\|x - y\| \leq 1} (y - x) \Pi_h (\lambda, x, dy) \Big\| 
		\\&\hspace{2cm}+ \Pi_h (\lambda, x, \{y \colon \|x - y\| > 1\}) \phantom \int
		\\&\leq_c h \big( \|a_h (\lambda, x)\| + \|b_h (\lambda, x)\| \big) + \Pi_h (\lambda, x, \{y \colon \|x - y\| > 1\}), \phantom \int
	\end{align*}
	where \(\leq_c\) denotes an inequality up to a multiplicative positive constant (that is independent of \(\lambda\) and \(x\)). 
	Now, the existence of the constant \(C\) follows from Standing Assumption~\ref{SA: 2} and the definition of \(\theta\).
	Further, that the final constant is invariant under translation is obvious from the above estimate. 
	The proof is complete.
\end{proof}

\begin{proof}[Proof of Proposition \ref{prop: upper hemi} (i)]
	First, we ``extend'' our setting and consider all \(d\) coordinates from the sets~\(\cR^h (x)\). Define the linear interpolation map
\begin{align} \label{eq: interpol complete}
	\Y_t (h) := \Big( \Big\lfloor \, \frac{t}{h}\, \Big \rfloor + 1 - \frac{t}{h} \Big)  \, Y_{\lfloor t/h\rfloor} + \Big( \frac{t}{h} - \Big\lfloor \, \frac{t}{h}\, \Big \rfloor\Big) \,  Y_{\lfloor t/h \rfloor + 1}, \quad t \geq 0,
\end{align}
which maps \(\Sigma^d\) into \(\Omega^d\). By definition of \(\cC^h (x)\) and Lemma~\ref{coro: connection}, there exists a sequence \((R^n)_{n = 1}^\infty\) with \(R^n \in \cR^{h_n} (x_n)\) and \(Q^n = R^n \circ \X (h_n)^{-1}\). Set \(P^n := R^n \circ \Y (h_n)^{-1} \in \cP (\Omega^d)\). In the following, we show that \(\{P^n \colon n \in \N\}\) is relatively compact in \(\cP (\Omega^d)\). As \(Q^n = P^n \circ \on{pr}_{\Omega_m}^{-1}\), where \(\on{pr}_{\Omega_m} \colon \Omega^d \to \Omega^m\) denotes the projection \(\on{pr}_{\Omega^m} (\omega^1, \dots, \omega^d) := (\omega^{r + 1}, \dots, \omega^d)\), which is clearly continuous, this implies relative compactness of \(\{Q^n \colon n \in \N\}\). 

\smallskip 
To show relative compactness of \(\{P^n \colon n \in \N\}\), we use \cite[Theorem~1.4.11]{SV}. 
	By virtue of the formula \eqref{eq: formula cond prob}, we get that
	\begin{align*}
	\sum_{j = 0}^{\lfloor T / h_n \rfloor} P^n ( \| &X_{(j + 1) h_n} - X_{j h_n} \| \geq \varepsilon )
	\\&= \sum_{j = 0}^{\lfloor T / h_n \rfloor} E^{P^n} \big[ P^n ( \| X_{(j + 1) h_n} - X_{j h_n} \| \geq \varepsilon \mid \sigma (X_{ih_n}, i = 1, \dots, j)) \big]
	\\& =\sum_{j = 0}^{\lfloor T / h_n \rfloor} E^{P^n} \big[ \Pi_{h_n} (\f^n_{j h_n}, X_{j h_n}, \{ y \colon \|y - X_{jh_n}\| \geq \varepsilon \} ) \big] 
	\\&\leq \frac{(T + 1)}{h_n} \sup \Big\{ \Pi_{h_n} (\lambda, x, \{y \colon \|y - x\| \geq \varepsilon \} ) \colon (\lambda, x) \in \Lambda \times \bR^d\Big\} \phantom {\sum_{j = 0}^{\lfloor T / h_n \rfloor}}
	\\&= (T + 1) \Delta^\varepsilon_{h_n} \to 0, \phantom {\sum_{j = 0}}
	\end{align*}
	where the convergence follows from Standing Assumption~\ref{SA: 2}. Now, thanks to Lemma~\ref{lem: uni bdd comp dis} and the definition of the sets \(\cR^{h_n} (x_n)\), we can apply \cite[Theorem~1.4.11]{SV} and conclude that \(\{P^n \colon n \in \N\}\) is tight  or equivalently, relatively compact in \(\cP(\Omega^{d})\).
	\end{proof} 
\begin{proof}[First proof of Proposition~\ref{prop: upper hemi}~(ii)] 
	%(ii). 
	Without loss of generality, we may assume that \(Q^n \to Q^0\) in \(\cP (\Omega^{m})\). We need to show that \(Q^0 \in \cD (x_0)\). Notice that \(Q^0 (X_0 = x_0) = 1\) by the continuity of \(\omega \mapsto \omega (0)\).
	
	To establish the remaining properties of \(Q^0\), we pass again to the extended setting that was used in the proof of part (i). Let \(R^n \in \cR^{h_n} (x_n)\) be such that \(Q^n = R^n \circ \X (h_n)^{-1}\) and set \(P^n := R^n \circ \Y (h_n)^{-1}\). As \((P^n)_{n = 1}^\infty\) is tight by part (i), possibly passing to a convergent subsequence, we may assume that \(P^n \to P^0\) in \(\cP(\Omega^d)\). Evidently, we have \(Q^0 = P^0 \circ \on{pr}_{\Omega^m}^{-1}\), which shows that we may identify properties of \(Q^0\) via the measure \(P^0\).
	
	In the following we work on the path space \(\Omega^d\) whose coordinate process is denoted by~\(X\). Recall that \(\pi_m (x_1, \dots, x_d) = (x_{r+1}, \dots, x_{d})\) for \((x_1, \dots, x_d) \in \bR^d\).
	Define \(\phi^n_t := \lfloor t / h_n \rfloor + 1, {^n}\hspace{-0.05cm}Z := X_{\phi^n h_n}\) and 
	\begin{align*}
		\nk \A_t := \sum_{i = 1}^{\phi^n_t} E^{P^n} \big[ \s_k (\pi_m (X_{i h_n})) - \s_k (\pi_m(X_{(i - 1)h_n})) \mid \cF^{d}_{(i - 1)h_n} \big], \quad t \geq 0.
	\end{align*}
	Let \(\D\) be the Skorokhod space of \cadlag functions from \(\bR_+\) into \(\bR\). As usual, we endow \(\D\) with the Skorokhod \(J_1\) topology. 
	Evidently, each \(\nk \A\) has paths in \(\D\).
	
	\begin{lemma} \label{lem: L-tightness}
		For all \(k \in \N\), the family \(\{P^n \circ \nk \A^{-1} \colon n \in \mathbb{N}\}\) is relatively compact in~\(\cP(\D)\). Furthermore, any of its accumulation points is supported on the space of Lipschitz continuous functions \(\bR_+ \to \bR\). 
	\end{lemma}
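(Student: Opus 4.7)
The strategy is to turn Lemma~\ref{lem: uni bdd comp dis} into a uniform ``almost Lipschitz'' bound on the paths of $\nk\A$. Since $\s_k \circ \pi_m \in C^2_b(\bR^d;\bR)$, Lemma~\ref{lem: uni bdd comp dis} supplies a constant $C_k > 0$ with
\[
|A_{h_n}(\s_k \circ \pi_m)(\lambda,x)| \leq C_k h_n
\]
uniformly in $(\lambda,x) \in \Lambda \times \bR^d$ for all sufficiently large $n$. Working on the extended path space $\Omega^d$ under $P^n$ as in the proof of part~(i), and invoking the feedback representation \eqref{eq: formula cond prob} via some adapted $\Lambda$-valued process $\f^n$, each discrete conditional increment in the definition of $\nk\A$ equals $A_{h_n}(\s_k \circ \pi_m)(\f^n_{i-1}, X_{(i-1)h_n})$ and is therefore bounded in absolute value by $C_k h_n$.

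Telescoping yields the key estimate
\[
|\nk\A_t - \nk\A_s| \leq C_k h_n (\phi^n_t - \phi^n_s) \leq C_k (t-s) + C_k h_n, \quad 0 \leq s \leq t,
\]
together with $|\nk\A_0| \leq C_k h_n$. The paths of $\nk\A$ are thus \cadlag step functions jumping only at the grid points $ih_n$ with jump sizes at most $C_k h_n$. On each compact $[0,T]$ they are uniformly bounded by $C_k(T+h_1)$, and their Skorokhod modulus of continuity is dominated by $C_k(\delta + h_n)$. A standard tightness criterion for measures on the Skorokhod space combined with Prokhorov's theorem then gives relative compactness of $\{P^n \circ (\nk\A)^{-1} \colon n \in \N\}$ in $\cP(\D)$.

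For the Lipschitz property of any weak accumulation point $\mu$, the fact that the maximal jump of $\nk\A$ under $P^n$ is bounded by $C_k h_n \searrow 0$ forces $\mu$ to be concentrated on the subspace of continuous paths, since $J_1$-convergence of \cadlag paths with vanishing jump sizes to a continuous path automatically strengthens to the local uniform topology. Passing the inequality $|\nk\A_t - \nk\A_s| \leq C_k(t-s) + C_k h_n$ to the limit by the portmanteau theorem (applied to the lower semicontinuous functional $\omega \mapsto |\omega(t)-\omega(s)|$) yields $|\omega(t)-\omega(s)| \leq C_k(t-s)$ for $\mu$-a.e. $\omega$ and all rational $0 \leq s \leq t$; continuity of the paths then extends this to all such pairs and establishes the $C_k$-Lipschitz property.

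The main obstacle is more bookkeeping than computation: the coordinate process on $\Omega^m$ does not directly inherit the feedback representation \eqref{eq: formula cond prob}, so one must carry out the estimates on the enlarged space $\Omega^d$, exploiting that $\s_k \circ \pi_m$ depends on the path only through its last $m$ coordinates. Once this lift is made explicit, the remaining ingredients are routine applications of standard tightness criteria and modulus-of-continuity manipulations for processes whose jumps vanish in the limit.
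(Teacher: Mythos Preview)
Your proof is correct and rests on the same core ingredient as the paper's: the uniform increment bound $|E^{P^n}[\s_k(\pi_m(X_{ih_n}))-\s_k(\pi_m(X_{(i-1)h_n}))\mid\cF^d_{(i-1)h_n}]|\le C_k h_n$ from Lemma~\ref{lem: uni bdd comp dis}. The two arguments differ only in the packaging of this bound. For tightness, you control the Skorokhod modulus directly, whereas the paper observes that the total variation of $\nk\A$ on $[0,T]$ is bounded by $TC_k$ and then invokes \cite[Propositions~VI.3.35, VI.3.36]{JS}, which simultaneously yields relative compactness and support on continuous paths. For the Lipschitz limit, you pass the inequality $|\nk\A_t-\nk\A_s|\le C_k(t-s)+C_kh_n$ through portmanteau at rational pairs $(s,t)$ and extend by continuity; the paper instead introduces the global Lipschitz functional $\iii\omega\iii=\sup_{s\ne t}|\omega(t)-\omega(s)|/|t-s|$, shows it is lower semicontinuous at continuous paths, and applies portmanteau once to the closed set $\{\iii\omega\iii\le C_k\}$ (using the slightly sharper bound $|\nk\A_t-\nk\A_s|\le C_k(t-s)$, which in fact holds since $(\phi^n_t-\phi^n_s)h_n=(\lfloor t/h_n\rfloor-\lfloor s/h_n\rfloor)h_n$ and one can sharpen your estimate, though your version with the $+C_kh_n$ slack is entirely sufficient). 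Your route is more hands-on and avoids the \cite{JS} citation; the paper's is more streamlined but relies on heavier machinery.
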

	\begin{proof}
		By Lemma~\ref{lem: uni bdd comp dis}, for every \(T > 0\), we have \(P^n\)-a.s.
		\begin{equation*} \begin{split}
			\on{Var}_T \, (\nk \A) &= \sum_{i = 1}^{\phi^n_T} \big| E^{P^n} \big[ \s_k (\pi_m (X_{i h_n})) - \s_k (\pi_m(X_{(i - 1)h_n})) \mid \cF^{d}_{(i - 1)h_n} \big] \big| 
			\\&\leq (\phi^n_T - 1) \, h_n \, C \leq T\, C,  \phantom \int
		\end{split} \end{equation*}
		where \(C = C (\s_k)\) is the constant from Lemma~\ref{lem: uni bdd comp dis}, and \(\on{Var}_T \, (\, \cdot\,)\) denotes the variation on the time interval \([0, T]\). Thanks to this observation, we may conclude from \cite[Propositions~VI.3.35, VI.3.36]{JS} that \(\{ P^n \circ \nk \A^{-1} \colon n \in \N\}\) is relatively compact in \(\cP(\D)\) and that any of its accumulation points is supported on \(\Omega^{1} \equiv \Omega\). 
		
		\smallskip 
		It remains to show that any accumulation point is not only supported on \(\Omega\) but even on the space of Lipschitz continuous functions. For \(\omega \in \D\), define 
		\[
		\iii \omega \iii := \sup \Big\{ \frac{|\omega (t) - \omega (s)|}{|t - s|} \colon s \not = t \Big\}.
		\]
		We shortly explain that \(\omega \mapsto \iii \omega\iii\) is lower semicontinuous from \(\D\) into \([0, \infty]\) at every \(\omega \in \Omega\). Take \((\omega^n)_{n = 1}^\infty \subset \D\) and \(\omega^0 \in \Omega\) such that \(\omega^n \to \omega^0\) in the Skorokhod \(J_1\) topology. 
		As \(\omega^0 \in \Omega\), the convergence \(\omega^n \to \omega^0\) also holds in the local uniform topology (cf. \cite[Proposition~VI.1.17]{JS}).
		Hence, we obtain that 
		\begin{align*}
			\iii \omega^0 \iii &= \sup \Big\{ \liminf_{n \to \infty} \, \frac{|\omega^n (t) - \omega^n (s)|}{|t - s|} \colon s \not = t \Big\}
			\leq \liminf_{n \to \infty} \iii \omega^n\iii.
		\end{align*}
		Let \((K^n)_{n = 1}^\infty\) be a convergent subsequence of \((P^n \circ \nk \A^{-1})_{n = 1}^\infty\) with limit~\(K^0\). Because \(K^0 (\Omega) = 1\), we showed that \(\omega \mapsto \iii \omega\iii\) is \(K^0\)-a.s. lower semicontinuous. Consequently, a version of the Portmanteau theorem (see \cite[Example~17, p. 73]{pollard}) yields that 
		\begin{align} \label{eq: portm}
			\limsup_{n \to \infty} K^n (\{ \omega \colon \iii \omega \iii \leq C \}) \leq K^0 (\{ \omega \colon \iii \omega \iii \leq C \}).
		\end{align} 
		Recalling Lemma~\ref{lem: uni bdd comp dis}, since \(P^n\)-a.s., for all \(s < t\), 
		\begin{align*}
			| \nk \A_t - \nk \A_s | &\leq \sum_{i = \phi^n_s + 1}^{\phi^n_t}  \big| E^{P^n} \big[ \s_k (\pi_m(X_{i h_n})) - \s_k (\pi_m(X_{(i - 1)h_n})) \mid \cF^{d}_{(i - 1)h_n} \big] \big| \\&\leq (\phi^n_t - \phi^n_s - 1) \, h_n \, C
			\leq (t - (\lfloor s / h_n \rfloor + 1) h_n ) \, C
			\leq (t - s) \, C, \phantom \int
		\end{align*}
		we have \(K^n (\{ \omega \colon \iii \omega \iii \leq C \}) = 1\) and consequently, \eqref{eq: portm} implies \(K^0 (\{ \omega \colon \iii \omega \iii \leq C \}) = 1\). We conclude that \(K^0\) is supported on the space of Lipschitz continuous functions (with Lipschitz constant~\(C\)). 
		The proof is complete. 
	\end{proof}
	
Define the product space
	\(
	\Theta := \Omega^{d} \times \D^\N
	\)
	and endow it with the product topology. Moreover, set 
	\begin{align*}
		D^n := P^n \circ ({^n}\hspace{-0.05cm}Z, \nl \A, \nm \A, \dots)^{-1}, \quad n \in \N.
	\end{align*}
By the Arzel\`a--Ascoli theorem, \(\omega^n \to \omega^0\) in \(\Omega^{d}\) implies \(\omega^n (\phi^n h_n) \to \omega^0\) locally uniformly. Hence, thanks to Skorokhod's coupling theorem, we see that \(P^n \circ {^n}\hspace{-0.05cm} Z^{-1} \to P^0\). 
	Using this observation and Lemma~\ref{lem: L-tightness}, we conclude that \(\{D^n \colon n \in \mathbb{N}\}\) is relatively compact in \(\cP(\Theta)\). In particular, from the second coordinate on, the marginal distributions of all accumulation points are supported on the space of Lipschitz continuous functions. 
	
	\smallskip 
	Let \(D^0\) be an accumulation point of \(\{D^n \colon n \in \N\}\) and denote the coordinate map on \(\Theta\) by \((Y^0, Y^1, \dots)\). Notice that the first marginal of \(D^0\) coincides with \(P^0\).
	We deduce from \cite[Proposition~IX.1.4]{JS}, where hypothesis (iv) follows readily from the Arzel\`a--Ascoli theorem, that the processes
	\[
	\s_k (\pi_m (Y^0)) - Y^k, \quad k \in \N,  
	\]
	are \(D^0\)-martingales for the natural filtration generated by \(Y\). 
	Let \((\cG^0_t)_{t \geq 0}\) be the natural filtration of \(\pi_m(Y^0)\). We get from \cite[Theorem~9.19, Proposition~9.24]{jacod79} that \(\s_k (\pi_m(Y^0))\) is a \(D^0\)-\((\cG^0_t)_{t \geq 0}\)-semimartingale and that its predictable compensator is given by 
	\[
	\int_0^\cdot E^{D^0} \big[ d Y^k_s / d \llambda \mid \cG^0_{s-} \big] ds.
	\]
	Consequently, by standard rules on changes of probability spaces (see \cite[Section~10.2.a]{jacod79}), to conclude \(Q^0 \in \cD (x_0)\) it remains to show that \(D^0\)-a.s., for \(\llambda\)-a.a. \(s \in \bR_+\),
	\begin{align*}
		\big( E^{D^0} \big[ d Y^k_s / d \llambda \mid \cG^0_{s-} \big] \big)_{k = 1}^\infty \in \Psi (\pi_m(Y^0_s)).
	\end{align*}
	By Standing Assumption~\ref{SA: 3}~(ii), the set \(\Psi (x)\) is convex for every \(x \in \bR^m\).
	Using this convexity and \cite[Theorems II.4.3, II.6.2]{sion}, it suffices to prove that \(D^0\)-a.s., for \(\llambda\)-a.a. \(s \in \bR_+\), 
	\[
	d Y^k_s / d \llambda \in \Psi (\pi_m(Y^0_s)).
	\] 
	This is the program for the remainder of this proof.
	
	Thanks to Skorokhod's coupling theorem, there are realizations \((\n Y^0, \n Y^1, \n Y^2, \dots)\) of \((Y^0, Y^1, Y^2, \dots)\) under \(D^n\) such that a.s. \((\n Y^0, \n Y^1, \n Y^2, \dots) \to (\o Y^0, \o Y^1, \o Y^2, \dots)\). 
	For \((h, \omega, s) \in (0, \infty) \times \Omega^{d} \times \bR_+\), 
	define 
	\[
	\Phi (h, \omega, s) :=  \Big\{ \big( \tfrac{1}{h}A_h (\s_k \circ \pi_m) (\lambda, \omega (s)) \big)_{k = 1}^\infty \colon \lambda \in \Lambda \Big\}
	\]
	and
	\[
	\Phi (0, \omega, s) := \Psi (\pi_m(\omega (s))). 
	\]
	Take \(N \in \N\). Using once again \cite[Theorems II.4.3, II.6.2]{sion}, we get a.s., for every \(t \in \bR_+\), 
	\begin{equation*} \begin{split}
		 \big(  (\n Y^k_{t + 1 / N} - \n Y^k_{t}) / (\phi^n_{t + 1/ N} &- 1 - \phi^n_t) h_n \big)_{k = 1}^\infty 
		 \\&\in \co \, \Big( \bigcup \Big\{ \Phi (h_n, \n Y^0, s) \colon s \in [\phi^n_t h_n, (\phi^n_{t + 1/ N} - 1) h_n] \Big\} \Big), 
	\end{split}
\end{equation*}
	where \(\co\) denotes the closure of the convex hull. We now investigate how the sets on the r.h.s. behave when \(n \to \infty\).
	
	\begin{lemma} \label{lem: upper hemi}
		The set-valued map
		\begin{align*}
		(\{h_n \colon n \in \N\} \cup \{0\})\hspace{0.02cm} \times &\, \Omega^{d} \times \bR_+ \times \bR_+
		\\&\ni (h, \omega, a, b) \mapsto \co \, \Big( \bigcup \Big\{ \Phi (h, \omega, s) \colon s \in [a\wedge b, a \vee b] \Big\} \Big)
		\end{align*}
		is upper hemicontinuous and compact-valued.
	\end{lemma}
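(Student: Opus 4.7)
The plan is to verify both properties via a single key ingredient: the uniform-on-compacts convergence
\[
\tfrac{1}{h} A_h(\s_k \circ \pi_m)(\lambda, x) \xrightarrow[h \searrow 0]{\quad} \bar L(\s_k \circ \pi_m)(\lambda, x) = L(\s_k)(\lambda, \pi_m(x))
\]
on \(\Lambda \times \bR^d\), for each test function \(\s_k\). Granted this, compact-valuedness follows easily: Lemma~\ref{lem: uni bdd comp dis} together with the boundedness of \(b, \sigma\) and the fact that \(\s_k \in C^2_b\) yields constants \(M_k > 0\) bounding \(\tfrac{1}{h}|A_h(\s_k \circ \pi_m)|\) and \(|L(\s_k)(\lambda, \pi_m(x))|\) uniformly in \((\lambda, x) \in \Lambda \times \bR^d\) and in \(h \in \{h_n\} \cup \{0\}\). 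Hence every \(\Phi(h, \omega, s)\) lies in the compact convex box \(\mathsf{K} := \prod_k [-M_k, M_k]\) in \(\bR^\N\) with product topology, and so does the closed convex hull appearing in the lemma, which is therefore closed in \(\mathsf{K}\) and compact.

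The uniform convergence I would obtain by a second-order Taylor expansion of \(\s_k \circ \pi_m\) integrated against \(\Pi_h(\lambda, x, dy)\). Splitting at \(\{\|y - x\| \leq 1\}\), the linear and quadratic terms contribute \(h \langle \nabla(\s_k \circ \pi_m)(x), b_h(\lambda, x)\rangle + \tfrac{h}{2}\on{tr}[\nabla^2(\s_k \circ \pi_m)(x) a_h(\lambda, x)]\), whose limit is identified via Standing Assumption~\ref{SA: 3}(iii); here we use that \(\s_k \circ \pi_m\) is independent of the first \(r\) coordinates, so the inner product and trace only see the last \(m\) components of \(\bar b\) and the lower-right \(m \times m\) block of \(\bar a\). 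The Taylor remainder on \(\{\|y - x\| \leq \varepsilon\}\) is dominated by \(\eta_k(\varepsilon) \cdot h \|a_h(\lambda, x)\|\), where \(\eta_k\) is the modulus of continuity of \(\nabla^2 \s_k\) (vanishing at \(0\) since \(\s_k \in C^2_b\) has uniformly continuous Hessian); the remainder on \(\{\varepsilon < \|y - x\| \leq 1\}\) is dominated by \(c_k \cdot h \Delta_h^\varepsilon\) for a constant \(c_k\); and \(\{\|y - x\| > 1\}\) contributes at most \(2\|\s_k\|_\infty \cdot h \Delta_h^1\). Dividing by \(h\) and sending first \(h \searrow 0\) and then \(\varepsilon \searrow 0\), the Standing Assumptions force each error term to vanish, uniformly in \((\lambda, x)\) on compacts.

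Upper hemicontinuity is then verified via the sequential characterization (cf.\ \cite[Chapter~17]{charalambos2013infinite}), applicable because \(\mathsf{K}\) is metrizable. The isolated case \(h^0 > 0\) reduces to continuity of \((\lambda, x) \mapsto \tfrac{1}{h^0} A_{h^0}(\s_k \circ \pi_m)(\lambda, x)\), which follows from Standing Assumption~\ref{SA: 1}. The delicate case is \(h^\nu \to 0\), \(\omega^\nu \to \omega^0\) locally uniformly, \(a^\nu \to a^0\), \(b^\nu \to b^0\); for a selection \(y^\nu\) in the value at level \(\nu\) with cluster point \(y^0\), I would exploit that the topological dual of \(\bR^\N\) with product topology consists of finite coordinate combinations \((y_k) \mapsto \sum_{k = 1}^N c_k y_k\). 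For each such functional the estimate
\[
\sum_{k = 1}^N c_k y^\nu_k \leq \sup_{\lambda \in \Lambda,\, s \in [a^\nu \wedge b^\nu,\, a^\nu \vee b^\nu]}\sum_{k = 1}^N c_k \tfrac{1}{h^\nu} A_{h^\nu}(\s_k \circ \pi_m)(\lambda, \omega^\nu(s))
\]
can be passed to the limit. Using the uniform convergence above (note that \(\omega^\nu(s)\) stays in a fixed compact of \(\bR^d\) for \(s\) in a neighborhood of the intervals, by Arzel\`a--Ascoli), the joint continuity of \(L(\s_k)\) from Standing Assumption~\ref{SA: 3}(i), and a standard near-maximizer compactness argument on both sides, the right-hand supremum tends to \(\sup_{\lambda, s}\sum_k c_k L(\s_k)(\lambda, \pi_m(\omega^0(s)))\). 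The bipolar theorem in \(\bR^\N\) then places \(y^0\) in the claimed closed convex hull. The main obstacle in this plan is the Taylor expansion itself: the scale \(\varepsilon\) has to be coupled with \(h\) so that \(\eta_k(\varepsilon)\) and \(\Delta_h^\varepsilon\) vanish simultaneously, which is possible precisely because \(\Delta_h^\varepsilon \to 0\) for every fixed \(\varepsilon > 0\) and \(\nabla^2 \s_k\) is uniformly continuous on \(\bR^m\); once this is in place, the bipolar assembly is soft.
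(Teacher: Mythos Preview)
Your proof is correct and takes a genuinely different route from the paper's. The paper first records the same uniform-on-compacts convergence \(\tfrac{1}{h_n}A_{h_n}(f) \to \bar L(f)\) (citing \cite[Lemma~11.2.1]{SV} rather than expanding by Taylor) and then dispatches the lemma by a chain of black-box results from set-valued analysis: continuity of \((h,\omega,s)\mapsto\Phi(h,\omega,s)\) via \cite[Proposition~1.4.14]{AF}, continuity and compact-valuedness of the union over \(s\) via \cite[Theorem~17.23]{charalambos2013infinite}, and finally preservation under the closed convex hull via \cite[Theorems~5.35, 17.35]{charalambos2013infinite}. You instead argue everything by hand: compact-valuedness via a product-box envelope in \(\bR^\N\), and upper hemicontinuity via the sequential criterion combined with Hahn--Banach separation against finitely supported linear functionals. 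Your approach is more self-contained and makes explicit where each standing assumption enters, at the price of length; the paper's proof is two lines once the references are in place but presupposes familiarity with the Aubin--Frankowska and Aliprantis--Border toolkit. One small remark: in your ``isolated case \(h^0>0\)'' you still need the separation step to pass through the union over \(s\) and the closed convex hull, not merely continuity of \((\lambda,x)\mapsto A_{h^0}(\s_k\circ\pi_m)(\lambda,x)\); this case is indeed easier only in that no limit in \(h\) is required.
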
 
	\begin{proof}
		First of all, by virtue of \cite[Lemma~11.2.1]{SV}, the Standing Assumptions~\ref{SA: 2} and \ref{SA: 3} imply that
		\begin{align} \label{eq: conv gen}
		\frac{1}{h_n} A_{h_n} (f) \to \langle \nabla f, \ob\, \rangle +\tfrac{1}{2} \on{tr} \big[ \nabla^2 f \, \oa\, \big], \quad f \in C^2_b (\bR^{d}; \bR), 
		\end{align}
		uniformly on compacts.
		Using this observation, we can conclude that the set-valued map \(\Phi\) is continuous by our Standing Assumptions~\ref{SA: 1} -- \ref{SA: 3} and \cite[Proposition~1.4.14]{AF}.
		Hence, by \cite[Theorem~17.23]{charalambos2013infinite}, \((h, \omega, a, b) \mapsto \bigcup\, \{ \Phi (h, \omega, s) \colon s \in [a \wedge b, a \vee b]\}\) is continuous and compact-valued. Finally, \cite[Theorems~5.35, 17.35]{charalambos2013infinite} yield the claim of the lemma. 
	\end{proof}
	As all \(\o Y^1, \o Y^2, \dots\) are a.s. continuous, we have a.s. \(\n Y^k \to \o Y^k\) locally uniformly (and not only in the Skorokhod \(J_1\) topology). 
	Now, by \cite[Theorem~17.25]{charalambos2013infinite} and Lemma~\ref{lem: upper hemi}, letting \(n \to \infty\), we conclude that a.s., for all \(t \in \bR_+\), 
	\[
	\big( N (\o Y^k_{t + 1/N} - \o Y^k_t) \big)_{k =1}^\infty \in \co \, \Big( \bigcup \Big\{ \Psi (\pi_m (\o Y^0_s)) \colon s \in [t, t + 1 / N] \Big\} \Big).
	\]
	Reusing this argument, letting \(N \to \infty\) shows that a.s., for \(\llambda\)-a.a. \(t \in \bR_+\), 
	\[
	\big( d \o Y^k_t / d \llambda \big)_{k = 1}^\infty \in \co \,\big( \Psi (\pi_m (\o Y^0_t)) \big) = \Psi (\pi_m (\o Y^0_t)),
	\]
	where we use the convexity hypothesis on \(\Psi\) for the final equality, see Standing~Assumption~\ref{SA: 3}~(ii).
	By virtue of Lemma~\ref{lem: MP chara d set}, we conclude that \(Q^0 \in \cD (x_0)\). 
\end{proof}

\begin{proof}[Second proof of Proposition~\ref{prop: upper hemi}~(ii)]
	Let \(P^n = R^n \circ \Y (h_n)^{-1} \in \mathcal{P}(\Omega^d)\) be as in the proof of part (i). 
	Recall from Section~\ref{sec: characterization 1} (more precisely, see \eqref{eq: formula cond prob}) that there exists a \(\Lambda\)-valued \((\cG_k)_{k = 0}^\infty\)-adapted process \(\f^n = (\f^n_k)_{k = 0}^\infty\) on \(\Sigma^d\) such that 
	\[
	R^n ( Y_k \in dy \mid \cG_{k - 1} ) = \Pi_{h_n} (\f^n_{k - 1}, Y_{k - 1}, dy), \quad k \in \mathbb{N}.
	\]
	As a consequence, for every \(f \in C_b (\bR^d; \bR)\), the process 
	\begin{align} \label{eq: martingale}
f (Y_k) - \sum_{i = 0}^{k - 1} A (f) (\f_i^n, Y_i), \quad k \in \mathbb{Z}_+, 
	\end{align} 
	is an \(R^n\)-martingale. Notice that \(\Y_{kh_n} (h_n) = Y_k\). Hence, we can reformulate \eqref{eq: martingale} as
	\begin{equation} \label{eq: martigale into relaxed}
		\begin{split}
		f (Y_k) &- \sum_{i = 0}^{k - 1} A (f) (\f_i^n, Y_i)
		\\&= f (\Y_{k h_n} (h_n)) - \sum_{i = 0}^{k - 1} A (f) (\f_i^n, \Y_{k h_n} (h_n)) 
		\\&= f (\Y_{k h_n} (h_n)) - \int_0^{k} A (f) (\f^n_{\lfloor z \rfloor h_n}, \Y_{\lfloor z \rfloor h_n} (h_n)) dz
		\\&= f (\Y_{k h_n} (h_n)) - \int_0^{kh_n} \tfrac{1}{h_n} \, A (f) (\f^n_{\lfloor z /h_n \rfloor h_n}, \Y_{\lfloor z /h_n \rfloor h_n} (h_n)) dz
		\\&= f (\Y_{k h_n} (h_n)) - \int_0^{kh_n} \hspace{-0.15cm} \int \tfrac{1}{h_n} \, A (f) (\lambda, \Y_{\lfloor z /h_n \rfloor h_n} (h_n)) \mathfrak{m}^n (d \lambda, dz),
	\end{split}
	\end{equation} 
	where 
	\[
	\mathfrak{m}^n (d \lambda, dz) := \delta_{\f^n_{\lfloor z / h_n \rfloor h_n}} (d \lambda) dz \in \m.
	\]
	Define \(Q^n := R^n \circ (\Y(h_n), \mathfrak{m}^n)^{-1} \in \mathcal{P} (\Omega^d \times \m)\). By the compactness of \(\m\) and the tightness of \((P^n)_{n = 1}^\infty\), which we proved in part (i), the sequence \((Q^n)_{n = 1}^\infty\) is tight in the space \(\mathcal{P}(\Omega^d \times \m)\). Possibly passing to a convergent subsequence, we may assume that \(Q^n \to Q^0\) in \(\mathcal{P}(\Omega^d \times \m)\). The aim for the remainder of this proof is to show that \(Q^0 \circ \on{pr}_{\Omega^m}^{-1} \in \cD(x^0)\). By virtue of Lemma~\ref{lem: relaxed controls}, this is implied by \(Q^0 \in \cK (x^0)\), where \(\cK (x^0)\) is the set of relaxed control rules as defined in Section~\ref{sec: characterization 2}.
	
	To show this, first notice that \(Q^0 (X_0 = (0_r, x^0)) = 1\) by the continuity of \((\omega, m) \mapsto \omega (0)\).
	Next, we prove that the processes in \eqref{eq: C martingale relax} are \(Q^0\)-martingales. 
	Take \(f \in C^2_b (\bR^d; \bR)\), two times \(s < t\) and an \(\cH_s\)-measurable function \(\mathfrak{z} \in C_b (\Omega^d \times \m; \bR)\). Using our usual notation \(\phi^n_s = \lfloor s / h_n \rfloor + 1\) and \(\phi^n_t = \lfloor t / h_n \rfloor + 1\), it follows from the \(R^n\)-martingale property of \eqref{eq: martingale} and the formula \eqref{eq: martigale into relaxed} that 
	\begin{align*}
		I_n := E^{Q^n} \Big[ \Big( f (X_{\phi^n_t h_n}) - f (X_{\phi^n_sh_n}) - \int_{\phi^n_s h_n}^{\phi^n_t h_n} \hspace{-0.15cm} \int \tfrac{1}{h_n} \, A (f) (\lambda, X_{\lfloor z /h_n \rfloor h_n}) M (d \lambda, dz) \Big) \, \mathfrak{z}\, \Big] = 0.
	\end{align*}
	With \(\bar{L} (f)\) as defined in \eqref{eq: bar L}, we get from the Arzel\`a--Ascoli theorem, \eqref{eq: conv gen} and the continuity of \(\bar{b}\) and \(\bar{a}\) (see Standing Assumption~\ref{SA: 3}~(i)) that 
	\begin{align*}
		f (X_{\phi^n_t h_b}) - f (X_{\phi^n_s h_n}) - \int_{\phi^n_s h_n}^{\phi^n_t h_n} \hspace{-0.15cm} \int &\, \tfrac{1}{h_n} \, A (f) (\lambda, X_{\lfloor z /h_n \rfloor h_n}) M (d \lambda, dz) \\&\to f (X_t) - f (X_s) - \int_s^t \hspace{-0.075cm} \int \bar{L} (f) (\lambda, X_r) M (d \lambda, dr)
	\end{align*}
	uniformly on compact subsets of \(\Omega^d \times \m\). 
	Further, by \cite[Theorem~8.10.61]{bogachev}, the map
	\[
	(\omega, m) \mapsto f (\omega (t)) - f (\omega(s)) - \int_s^t \hspace{-0.075cm} \int L (f) (\lambda, \omega (r)) m (d \lambda, dr) \, \mathfrak{z} (\omega, m)
	\]
	is bounded and continuous on \(\Omega^d \times \m\) (where we use again the continuity assumptions from Standing Assumption~\ref{SA: 3}~(i)).
	Hence, by a variant of \cite[Problem~2.4.12]{KaraShre}, where the uniform boundedness hypothesis follows from Lemma~\ref{lem: uni bdd comp dis}, we conclude that
	\begin{align*}
	E^{Q^0} \Big[ \Big( & \,f (X_t) - f (X_s) - \int_s^t \hspace{-0.075cm} \int L (f) (\lambda, X_r) M (d \lambda, dr) \Big) \, \mathfrak{z} \, \Big] 
	= \lim_{n \to \infty} I_n= 0.
	\end{align*} 
	In summary, we proved that \(Q^0 \in \cK (x^0)\), which completes the proof. 
\end{proof} 
\subsection{The ``Lower'' Part}
In this section we establish a property that is closely connected to the sequential characterization of a lower hemicontinuous set-valued map.

\begin{proposition} \label{prop: lower hemi}
	Assume that Condition~\ref{cond: main1} holds.
	Take a sequence \((h_n)_{n = 1}^\infty \subset (0, \infty)\) such that \(h_n \searrow 0\), a sequence \((x_n)_{n = 0}^\infty \subset \bR^m\) such that \(x_n \to x_0\), and a measure \(Q \in \cD (x_0)\). Then, there exists a subsequence \((h_{N_n})_{n = 1}^\infty \subset (h_n)_{n = 1}^\infty\) and a sequence \(Q^{h_{N_n}} \in \cC^{h_{N_n}} (x_{N_n})\) such that \(Q^{h_{N_n}} \to Q\) weakly. 
\end{proposition}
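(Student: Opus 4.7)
The plan is to invoke a density reduction and then build an explicit Markov-chain approximation for a restricted class of strong control rules. Specifically, I would first show that it suffices to prove the statement for laws of strong SDE solutions driven by finitely piecewise-constant feedback controls with continuous weights, and then construct such an approximation directly from the kernels $\Pi_{h_n}$, extracting the desired subsequence via the tightness and martingale-problem arguments already developed for the ``upper'' part in Section~\ref{sec: upper}.

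\medskip

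\emph{Step 1 (Density).} By Lemma~\ref{lem: relaxed controls}, $Q$ is the $\on{pr}_{\Omega^m}$-marginal of some relaxed control rule $\widetilde Q \in \cK (x_0)$. Adapting the density arguments from \cite{DNS12, LTT23, PTZ21} and \cite[Chapter~11]{SV} to our setting (this is where Condition~\ref{cond: main1} first enters, through pathwise uniqueness and stability of the SDE), $Q$ can be approximated in the weak topology of $\cP(\Omega^m)$ by laws of strong solutions of
\[
dZ_t = b(\lambda_t, Z_t)\, dt + \sigma(\lambda_t, Z_t)\, dW_t, \qquad Z_0 = x_0,
\]
where $W$ is an $r$-dimensional standard Brownian motion and
\[
\lambda_t \;=\; \sum_{k = 0}^{K - 1} \1_{[s_k, s_{k + 1})}(t)\, \alpha_k(Z_{s_k})
\]
for a finite grid $0 = s_0 < s_1 < \cdots < s_K$ and continuous functions $\alpha_k \colon \bR^m \to \Lambda$. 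By a diagonal extraction, it is then enough to approximate one such $Q$ with a fixed choice of $K$ and continuous $\alpha_k$.

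\medskip

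\emph{Step 2 (Discrete selector).} Pass to the ``extended'' $d$-dimensional setting in which $Y = (W^{(n)}, Z^{(n)})$ tracks both the driver and the state, as used in the proofs of Proposition~\ref{prop: upper hemi}. On the grid $jh_n$, $j \in \mathbb{Z}_+$, define the $\Lambda$-valued feedback selector
\[
\f^n_j \;:=\; \alpha_{k(j, n)}\bigl( \pi_m (Y_{j^\star(j, n)}) \bigr),
\]
where $k(j, n)$ is the unique index with $j h_n \in [s_{k(j, n)}, s_{k(j, n) + 1})$ and $j^\star(j, n) := \lfloor s_{k(j, n)} / h_n \rfloor$. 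Let $R^n$ be the measure on $\Sigma^d$ determined by the initial condition $(0_r, x_n)$, the selector $\f^n$ and the kernels $\Pi_{h_n}$; by Lemma~\ref{coro: connection}, $R^n \in \ccP^{h_n}(x_n)$, and we set $P^{h_n} := R^n \circ \X(h_n)^{-1} \in \cC^{h_n}(x_n)$.

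\medskip

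\emph{Step 3 (Identification of the limit).} Apply Proposition~\ref{prop: upper hemi}(i) (whose proof uses only the Standing Assumptions) to extract a weakly convergent subsequence $P^{h_{N_n}} \to P^0$ in $\cP(\Omega^m)$. Lifting $R^{N_n}$ to the relaxed setting via the random measures
\[
\mathfrak{m}^{N_n}(d\lambda, dz) \;:=\; \delta_{\f^{N_n}_{\lfloor z / h_{N_n} \rfloor}}(d\lambda)\, dz,
\]
and rerunning the martingale-convergence calculation from the second proof of Proposition~\ref{prop: upper hemi}(ii), one identifies any weak accumulation point of the joint laws on $\Omega^d \times \m$ as a relaxed control rule whose control acts as $\lambda_t = \alpha_k(\pi_m (X_{s_k}))$ on $[s_k, s_{k+1})$; the piecewise constancy in time and the continuity of the $\alpha_k$ both survive the limit on the dense time grids $h_n \mathbb{Z}_+$. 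Pathwise uniqueness of the SDE (Condition~\ref{cond: main1} together with the continuity of the $\alpha_k$) then forces this limiting relaxed rule to coincide with the strong rule representing $Q$, so that $P^0 = Q$ after projection via $\on{pr}_{\Omega^m}$.

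\medskip

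The main obstacle is the density claim of Step~1: starting from a general $Q \in \cD(x_0)$ one must produce approximations with the rigid structure of strong, piecewise-constant-in-time, continuous-in-state feedback controls while controlling the weak topology. The Lipschitz Condition~\ref{cond: main1} enters crucially both here (for stability under control approximation and for SDE wellposedness) and in Step~3 (to pin down the weak limit uniquely). The remaining ingredients---tightness, martingale-problem identification through the relaxed formulation, handling of $x_n \to x_0$ via SDE stability estimates, and a diagonal extraction combining the density approximation with the discrete approximations $P^{h_n}$---are largely standard once Step~1 is in place.
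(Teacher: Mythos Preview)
There is a genuine gap in Step~1. Your density claim---that laws of SDEs with piecewise-constant \emph{state-feedback} controls $\lambda_t = \alpha_k(Z_{s_k})$, for continuous $\alpha_k \colon \bR^m \to \Lambda$, are dense in $\cD(x_0)$---is not supported by the cited references and fails in general. Take $\sigma \equiv 0$, $m=1$, $b(\lambda, x) = \lambda$, $\Lambda = [-1,1]$ (this satisfies the Standing Assumptions and Condition~\ref{cond: main1}): since $Z_0 = x_0$ is deterministic and each weight $\alpha_k(Z_{s_k})$ is a deterministic function of the state, the controlled process $Z$ is itself deterministic, so your class of controls reaches only Dirac measures. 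Yet $\cD(x_0)$ contains genuinely random laws---for instance the law of $t \mapsto x_0 + \int_0^t \operatorname{sign}(W_s)\, ds$ for an external Brownian motion $W$---so the density is false. More broadly, the single point $Z_{s_k} \in \bR^m$ carries strictly less information than the full driving noise on $[0,s_k]$, and no adaptation of the arguments in \cite{DNS12, LTT23, PTZ21} will close this gap.

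The paper circumvents this by making the control weights functions of the \emph{Brownian path} rather than of the state: via \cite[Lemma~5.7]{PTZ21} it approximates $Q$ by strong SDE solutions with piecewise constant weights $\alpha(k) \colon \Omega^r \to \Lambda$ that are continuous in the driving noise, and then builds the discrete chain so that the selector at step $k$ depends on the interpolation of the first $r$ coordinates $\pi_r(Y)$ of the extended process---precisely the component that converges to Brownian motion. This is why the extended $d = m + r$ dimensional setting is essential for the ``lower'' part and not merely a bookkeeping device: the Brownian component supplies the external randomness needed to reach all of $\cD(x_0)$, which the state alone cannot provide when $\sigma$ is degenerate.
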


\begin{proof}
	Before we give a detailed proof let us shortly comment on tactics. As we will see below, any measure \(Q \in \cD(x_0)\) can be approximated by laws of strong solution processes of SDEs of the type
	\[
	d Y_t = b (\alpha_t (B), Y_t) dt + \sigma (\alpha_t (B), Y_t) d B_t, \quad Y_0 = x_0, 
	\]
	where \(\alpha\) are step processes with continuous weights and \(B\) is an \(r\)-dimensional Brownian motion. Consequently, it suffices to weakly approximate such strong solution processes by suitable chains. To conclude convergence to a specifically given law, one typically requires some sort of uniqueness property. At first glance, the above SDE seems to lack such a property. Yet, it follows easily from the Lipschitz assumptions in Condition~\ref{cond: main1} that the bi-variate process \((B, Y)\) solves an SDE that satisfies uniqueness in law. Consequently, our strategy is to approximate this bi-variant process and then to deduce the statement of the lemma through projection. We now provide the details. 
	
	\smallskip 
	Take \(Q \in \cD (x_0)\). By a version of Lemma~\ref{lem: relaxed controls}, there exists a relaxed control rule \(\overline{Q}\) (in the sense of Section~\ref{sec: characterization 2} with \((\bar{b},\bar{a})\) and \(d\) replaced by \((b, \sigma \sigma^*)\) and \(m\), respectively) such that \(Q = \overline{Q} \circ \on{pr}_{\Omega^m}^{-1}\).
By \cite[Lemma~5.7]{PTZ21}, we may approximate \(\overline{Q}\) by piecewise constant strong control rules and consequently, \(Q\) by whose projections to \(\Omega^m\).
	To explain this in detail, suppose that \((\Omega^r, \cF^r, (\cF^r_t)_{t \geq 0}, \cW)\) is the canonical setup endowed with the \(r\)-dimensional Wiener measure \(\cW\). Let \(\Gamma_{S, 0}\) be the set of all \(\Lambda\)-valued piecewise constant control processes \(\alpha = (\alpha_t)_{t \geq 0}\) defined on the space \(\Omega^r\), i.e., all \(\Lambda\)-valued processes of the form 
	\[
	\alpha = \sum_{k = 0}^\infty \alpha (k) \1_{[t_k, t_{k + 1})}, 
	\]
	where the weights \(\alpha (k)\) are \(\Gamma\)-valued, \(\cF^r_{t_k}\)-measurable and \(0 =: t_0 < t_1 < \dots < t_n \to \infty\) is a deterministic sequence. 
	Then, \cite[Lemma~5.7]{PTZ21} yields the existence of a sequence \((\alpha^n)_{n = 1}^\infty \subset \Gamma_{S, 0}\) such that the laws of 
	\begin{align} \label{eq: SDE Y}
	d Y^n_t = b (\alpha^n_t (X), Y^n_t) dt + \sigma (\alpha^n_t (X), Y^n_t) d X_t, \quad Y^n_0 = x_0, 
\end{align}
converge weakly to \(Q\). Here, we recall that the coordinate process \(X\) is an \(r\)-dimensional Brownian motion under \(\cW\) and that the SDEs \eqref{eq: SDE Y} are well-known to have strong solutions by the Lipschitz assumptions from Condition~\ref{cond: main1}.
Therefore, by a routine subsequence argument, for our purposes it suffices to weakly approximate SDEs of the type \eqref{eq: SDE Y} by suitable chains. As we explain now, we can even assume that the weights \(\alpha^n (k), \, n \in \N, \, k \in \mathbb{Z}_+\), of the piecewise controls in \eqref{eq: SDE Y} are continuous functions from \(\Omega^r\) into \(\Lambda\).  

Take \(\alpha^0 \in \Gamma_{S, 0}\) and recall that \(\Lambda\) is a compact convex subset of a strictly convex separable Banach space \(\mathbb{X}\).
As \(C_b (\Omega^r; \mathbb{X})\) is dense in \(L^1 (\Omega^r, \cF^r, \cW; \mathbb{X})\) by \cite[Proposition~1.3.9]{GP}, there exists a sequence \((\alpha^n)_{n = 1}^\infty \subset \Gamma_{S, 0}\), whose weights \(\alpha^n (k), \, n \in \N, \, k \in \mathbb{Z}_+,\) are continuous functions from \(\Omega^r\) into \(\mathbb{X}\), such that \(\alpha^n \to \alpha^0\) in the ucp topology, i.e., uniformly on compact time intervals in probability. 
Let \(\on{pr}_\Lambda\) be the metric projection from \(\mathbb{X}\) to \(\Lambda\). By \cite[Theorem~3, Corollary~7]{wul}, the map \(\on{pr}_\Lambda\) is continuous. 
Hence, because \(\on{pr}_\Lambda (\alpha^0) = \alpha^0\), applying \(\on{pr}_\Lambda\) to the approximation sequence \((\alpha^n)_{n = 1}^\infty\), we can also assume that the weights \(\alpha^n (k), \, n \in \N, \, k \in \mathbb{Z}_+\), are \(\Lambda\)-valued.
By the Lipschitz assumptions from Condition~\ref{cond: main1}, there are strong solutions \(Y^n\) to the SDEs \eqref{eq: SDE Y} and it follows from \cite[Theorem~16.4.3]{CE15} that \(Y^n \to Y^0\) in the ucp topology. In particular, the laws of \(Y^n\) converge weakly to the law of~\(Y^0\). 

Summing up, it suffices to approximate the law of a strong solution to the SDE 
	\begin{align} \label{eq: SDE Y main approx argument} 
	d Y_t = b (\alpha_t (X), Y_t) dt + \sigma (\alpha_t (X), Y_t) d X_t, \quad Y_0 = x_0, 
\end{align}
defined on \((\Omega^r, \cF^r, (\cF^r_t)_{t \geq 0}, \cW)\) where \(\alpha \in \Gamma_{S, 0}\) has continuous weights. 
This is the program for the remainder of this proof.

\smallskip 
To fix notation, we write 
\[
\alpha = \sum_{k = 0}^\infty \alpha (k) \1_{[t_k, t_{k + 1})}, 
\]
where we recall that \(\alpha (k)\) are \(\cF_{t_k}\)-measurable and continuous on \(\Omega^r\). 

For the following construction, we work on a filtered probability space, with probability measure \(P\), that supports a sequence \(U^1, U^2, \dots\)  of i.i.d. \(\on{Uni} \, ( [0, 1] )\) random variables. 
	By \cite[Lemma~4.22]{Kallenberg}, for every \(h > 0\), there exists a Borel function \(\k_h \colon \Lambda \times \bR^{d} \times [0, 1] \to \bR^{d}\) such that 
	\[
	P \circ \k_h (\lambda, x, U^1)^{-1} (dy) = \Pi_h (\lambda, x, dy), \quad \lambda \in \Lambda, \, x \in \bR^{d}.
	\]
	
	Let \(\Z \colon \Sigma^d \to \Omega^r\) be defined by 
	\begin{align*} 
		\Z_t (h) := \Big( \Big\lfloor \, \frac{t}{h}\, \Big \rfloor + 1 - \frac{t}{h} \Big)  \, \pi_r(Y_{\lfloor t/h\rfloor})+ \Big( \frac{t}{h} - \Big\lfloor \, \frac{t}{h}\, \Big \rfloor\Big) \,  \pi_r (Y_{\lfloor t/h \rfloor + 1}), \quad t \geq 0,
	\end{align*}
	where \(\pi_r \colon \bR^d \to \bR^r\) is given by \(\pi_r (x_1, \dots, x_d) := (x_1, \dots, x_r)\). 
	
	Define a sequence \(Z^{h_n}_0, Z^{h_n}_1, Z^{h_n}_2, \dots\) of \(\bR^d\)-valued random variables as follows:
	\begin{align*}
	Z^{h_n}_0 &:= (0_r, x_n), \\ Z^{h_n}_k &:= \k_{h_n} (\alpha_{m - 1} (\Z_{\cdot \wedge (k - 1) h_n} (h_n) \circ Z^{h_n}), Z^{h_n}_{k - 1}, U^k), \quad t_{m - 1} \leq kh_n < t_{m},
	\end{align*}
	where \(Z^{h_n} := (Z^{h_n}_0, Z^{h_n}_1, \dots)\). 
	
	Finally, define 
	\[
	Q^{h_n} := Q \circ (\Y (h_n) \circ Z^{h_n})^{-1}, \quad n \in \N, 
	\]
	which is the law of the linear interpolation (recall \eqref{eq: interpol complete} for the definition of \(\Y (h)\)) of the chain \(Z^{h_n}\). 
	It is readily seen that \(Q^{h_n} \circ \on{pr}_{\Omega^m}^{-1} \in \cC^{h_n} (x_n)\).
		
		W.l.o.g. we assume that \(h_1 < \theta\) with \(\theta \in (0, \delta)\) such that \eqref{eq: theta bounded} holds.
	Relative compactness of \(\{ Q^{h_n} \colon n \in \N\}\) in the space \(\cP(\Omega^d)\) follows as in the proof of Proposition~\ref{prop: upper hemi}~(i). To ease our presentation, we assume that the sequence \((Q^{h_n})_{n = 1}^\infty\) converges weakly to a probability measure \(Q^0\).
	
We now use a martingale problem argument to identify \(Q^0\) and relate it to \eqref{eq: SDE Y main approx argument}. By the continuity of \(\omega \mapsto \omega (0)\), it is evident that \(Q^0 \circ X^{-1}_0 = \delta_{(0, x_0)}\).
	Take two times \(s < t\), \(f \in C^2_b (\bR^d; \bR)\) and let \(g \in C_b (\Omega^d; \bR)\) be \(\cF^{d}_s\)-measurable. 
	By construction, it follows that
	\begin{align*}
		I_n :\hspace{-0.1cm}&=	E^{Q^{h_n}} \Big[ \Big( f (X_{\phi^n_t h_n}) - f (X_{\phi^n_s h_n}) 
		\\&\hspace{1.75cm}- \sum_{i = \phi^n_s}^{\phi^n_t - 1} \sum_{m = 1}^{\infty} A_{h_n} (f) (\alpha_{m - 1} (\pi_r (X_{\cdot \wedge (i - 1)h_n})), X_{ih_n}) \1_{\{t_{m - 1} \leq i h_n < t_m\}} \Big) \, g (X) \, \Big] \\&= 0,
	\end{align*}
	where \(\phi^n_s = \lfloor s / h_n \rfloor + 1\) and \(\phi^n_t = \lfloor t / h_n \rfloor + 1\).
	With \(\bar{L}\) defined as in \eqref{eq: bar L}, notice that 
	\[
	\omega \mapsto \Big( f (\omega (t)) - f (\omega (s)) - \int_s^t \bar{L} (f) (\alpha_z (\pi_r (\omega)), \omega(z)) dz \Big) \, g (\omega)
	\]
	is bounded and continuous by Standing~Assumption~\ref{SA: 3}~(i). 
	\begin{lemma}
The sequence
	\begin{align} \label{eq: sequence approx MP}
		 \sum_{i = \phi^n_s}^{\phi^n_t - 1}  \sum_{m = 1}^\infty  A_{h_n} (f) (\alpha_{m - 1} (\pi_r (X_{\cdot \wedge (i - 1)h_n})), X_{ih_n}) \1_{\{t_{m - 1} \leq i h_n < t_m\}}, \quad n \in \N, 
	\end{align}
	converges to 
	\begin{align*}
		\int_s^t L (f) (\alpha_z (\pi_r (X)), X_z) dz
	\end{align*}
	uniformly on compact subsets of \(\Omega^d\).
\end{lemma}
\begin{proof}
	We assume that for some \(N \in \N\) that \(t_{N - 1} \leq s < t \leq t_{N}\). The general result follows from pasting. 
	It suffices to investigate the convergence of
	\begin{align*}
	\sum_{j = \phi^n_s}^{\phi^n_t - 1} A_{h_n} (f) &(\alpha_{N - 1} (\pi_r (X_{\cdot \wedge (i - 1)h_n})), X_{j h_n}) 
	\\&= \int_{\phi^n_s h_n}^{\phi^n_t h_n} \frac{A_{h_n} (f) (\alpha_{N - 1} (\pi_r (X_{\cdot \wedge ( \lfloor z / h_n \rfloor h_n - 1) h_n})), X_{\lfloor z / h_n \rfloor h_n})}{h_n} dz.
	\end{align*} 
	It follows readily from \eqref{eq: conv gen}, the continuity of \(\ob\) and \(\oa\) (see Standing Assumption~\ref{SA: 3}~(i)), and the Arzel\`a--Ascoli theorem, that the final integral converges to 
	\[
	\int_s^t \bar{L} (f) (\alpha_{N - 1} (\pi_r (X)), X_z) dz
	\]
	uniformly on compact subsets of \(\Omega^d\). 
\end{proof}
	
	Notice that the sequence \eqref{eq: sequence approx MP} is uniformly bounded by Lemma~\ref{lem: uni bdd comp dis}.
	Hence, by \cite[Problem~2.4.12]{KaraShre}, the previous lemma and the weak convergence \(Q^{h_n}\to Q^0\), we get that
	\[
E^{Q^0} \Big[ \Big( f (X_t) - f (X_s) - \int_s^t \bar{L} (f) (\alpha_z (\pi_r(X)), X_z) dz \Big) g (X) \, \Big] = \lim_{n \to \infty} I_n = 0.
	\] 
	Finally, by standard results on martingale problems (cf., e.g., \cite[Theorem~2.10]{jacod1981weak} or \cite[Theorem~VII.2.7]{RY}), under \(Q^0\), the process \(\pi_r (X)\) is a standard Brownian motion and 
	\[
	d \pi_m (X_t) = b (\alpha_t (\pi_r (X)), \pi_m (X_t)) dt + \sigma (\alpha_t (\pi_r (X)), \pi_m (X_t)) d \pi_r (X_t), \quad \pi_m (X_0) = x_0.
	\]
	Now, as the Lipschitz assumptions from Condition~\ref{cond: main1} entail uniqueness in law for this SDE, we conclude that \(Q^0 \circ \on{pr}_{\Omega^m}^{-1} = Q\). 
In summary, we constructed an appropriate approximation and, by the considerations above, the proof is complete.
	\end{proof}

\subsection{Proof of Theorem~\ref{theo: main1}~(ii)} 

We are in the position to give a proof for the Hausdorff metric convergence in Theorem~\ref{theo: main1}~(ii). 

\smallskip
Recall that we have to prove that 
\begin{align*}
\mathsf{h} (\cC^{h_n} (x_n), \cD (x_0)) = \max \Big\{ \sup_{P \in \cC^{h_n} (x_n)} \mathsf{p} (P, \cD (x_0)), \, \sup_{Q \in \cD (x_0)} \mathsf{p} (Q, \cC^{h_n} (x_n)) \Big\} \to 0.
\end{align*}

		We start investigating the first term on the right hand side. 	
	By the definition of the supremum, for every \(n \in \mathbb{N}\), there exists a measure \(P^n \in \cC^{h_n} (x_n)\) such that 
	\[
	\sup_{P \in \cC^{h_n} (x_n)} \mathsf{p} (P, \cD (x_0)) \leq \mathsf{p} (P^n, \cD (x_0)) + \frac{1}{n}. 
	\]
	Hence, we notice that 
	\[
	\limsup_{n \to \infty} \sup_{P \in \cC^{h_n} (x_n)} \mathsf{p} (P, \cD (x_0)) = \limsup_{n \to \infty} \mathsf{p} (P^n, \cD (x_0)).
	\]
	Up to passing to a subsequence, that we suppress in our notation for simplicity, we may assume that
	\[
	\lim_{n \to \infty} \mathsf{p} (P^n, \cD (x_0))
	\]
	exists. 
	By Proposition~\ref{prop: upper hemi}, the family \(\{P^n \colon n \in \mathbb{N}\}\) is relatively compact in \(\cP(\Omega^m)\) and any of its accumulation points is contained in \(\cD (x_0)\). Again, up to passing to a subsequence, we may assume that \(P^n \to P \in \cD (x_0)\) weakly. By the continuity of the distance function, it follows that 
	\[
	\lim_{n \to \infty} \mathsf{p} (P^n, \cD (x_0)) = \mathsf{p} (P, \cD (x_0)) = 0.
	\]
	This shows that the first term in the definition of \(\mathsf{h}\) vanishes. 
	
	\smallskip
	Next, we turn to the second term. Again by definition of the supremum, there exists a sequence \((Q^n)_{n = 1}^\infty \subset \cD (x_0)\) such that 
	\[
	\limsup_{n \to \infty} \sup_{Q \in \cD (x_0)} \mathsf{p} (Q, \cC^{h_n} (x_n)) = \limsup_{n \to \infty} \mathsf{p} (Q^n, \cC^{h_n} (x_n)).
	\]
	Up to passing to a subsequence, we presume that the limit 
	\[
	\lim_{n \to \infty} \mathsf{p} (Q^n, \cC^{h_n} (x_n))
	\]
	exists. 
	By the compactness of \(\cD (x_0)\), which follows from Theorem~\ref{theo: main1}~(i), the family \(\{Q^n \colon n \in \mathbb{N}\}\) is relatively compact in \(\cP (\Omega^m)\) and any of its accumulation points is contained in \(\cD (x_0)\). Up to passing to a subsequence, we can assume that \(Q^n \to Q^0 \in \cD (x_0)\) weakly. 
	
	We deduce from Proposition~\ref{prop: lower hemi}
	that there exists a subsequence \((h_{N_m})_{m = 1}^\infty\) of the sequence \((h_m)_{m = 1}^\infty\) and measures \(P^{m} \in \cC^{h_{N_m}} (x_{N_m})\) such that \(P^{m} \to Q^0\) as \(m \to \infty\). 
	Now, we conclude that 
	\begin{align*}
	\mathsf{p} (Q^n, \cC^{h_{N_n}} (x_{N_n})) \leq \mathsf{p} (Q^n, P^{n}) \leq \mathsf{p} (Q^n, Q^0) + \mathsf{p} (Q^0, P^{n}) \to 0.
	\end{align*}
	This proves that 
	\[
\lim_{n \to \infty} \mathsf{p} (Q^n, \cC^{h_n} (x_n)) = 0, 
\]
and all together, \(\mathsf{h} (\cC^{h_n} (x_n), \cD (x_0)) \to 0\). The proof is complete. \qed

\section{Proofs of Theorems~\ref{theo: upper} and \ref{theo: lower}} \label{sec: pf iii}

In this final section, we present the proofs for the Theorems~\ref{theo: upper} and \ref{theo: lower}.

\begin{proof}[Proof of Theorem~\ref{theo: upper}]
	There exists a sequence \((P^n)_{n = 1}^\infty\) such that \(P^n \in \cC^{h_n} (x_n)\) and
	\[
	\limsup_{n \to \infty} \sup_{P \in \cC^{h_n} (x_n)} E^P \big[ \varphi^n \big] = \limsup_{n \to \infty} E^{P_n} \big[ \varphi^n \big]. 
	\]
	Up to passing to a subsequence, we can assume that 
	\[
	\lim_{n \to \infty} E^{P^{n}} \big[ \varphi^n \big]
	\]
	exists.
	By Proposition~\ref{prop: upper hemi}, passing possibly again to a subsequence, we may assume that \(P^n \to P^0 \in \cD (x_0)\).
	Take \(\varepsilon > 0\). Then, there exists a compact set \(K = K_\varepsilon \subset \Omega^m\) such that 
	\[
	\sup_{n \in \mathbb{N}} P^n (K^c) \leq \varepsilon. 
	\]
	As \(\varphi^n \to \varphi\) b-uc, we have 
	\[
	E^{P_n} \big[ \big| \varphi^n - \varphi \big| \big] \leq \sup_{\omega \in K} | \varphi^n (\omega) - \varphi (\omega) | + 2 \varepsilon \sup_{m \in \mathbb{Z}_+}\| \varphi^m \|_\infty  \to 2 \varepsilon \sup_{m \in \mathbb{Z}_+} \| \varphi^m \|_\infty.
	\]
Since \(\varepsilon > 0\) is arbitrary, we get that 
\[
\lim_{n \to \infty} E^{P^n} \big[ \varphi^n \big] = \lim_{n \to \infty} E^{P^n} \big[ \varphi \big].
\]
Finally, by the Portmanteau theorem and the \(P^0\)-a.s. upper semicontinuity of \(\varphi\),
\begin{align*}
	\lim_{n \to \infty} E^{P^{n}} \big[ \varphi^n\big] \leq E^{P^0} \big[ \varphi \big] \leq \sup_{P \in \cD (x_0)} E^P \big[ \varphi \big]. 
\end{align*}
	In summary, 
	\begin{align*}
		\limsup_{n \to \infty} \sup_{P \in \cC^{h_n} (x_n)} E^P \big[ \varphi^n\big] \leq \sup_{P \in \cD (x_0)} E^P \big[ \varphi\big].
	\end{align*}
	The proof is complete.
\end{proof}

\begin{proof}[Proof of Theorem~\ref{theo: lower}]
	Up to passing to a subsequence, we can assume that 
	\[
	\lim_{n \to \infty} \sup_{P \in \cC^{h_n} (x_n)} E^P \big[ \varphi^n \big]
	\]
	exists. 
	Take an arbitrary \(Q \in \cD (x_0)\). Then, by Proposition~\ref{prop: lower hemi}, there exists a sequence \((P^{n})_{n = 1}^\infty\) such that \(P^{n} \in \cC^{h_{N_n}} (x_{N_n})\) and \(P^{n} \to Q\). As in the proof of Theorem~\ref{theo: upper}, we observe that 
	\[
	\liminf_{n \to \infty} E^{P^n} \big[ \varphi^n \big] = \liminf_{n \to \infty} E^{P_n} \big[ \varphi \big]. 
	\]
	Now, by the Portmanteau theorem and the \(Q\)-a.s. lower semicontinuity of \(\varphi\), we get that 
	\begin{align*}
		E^{Q} \big[ \varphi  \big]  &\leq \liminf_{n \to \infty} E^{P^{n}} \big[ \varphi \big] \phantom {\sup_{P \in \cC^{h_n} (x_n)}}
		\\& =	\liminf_{n \to \infty} E^{P^n} \big[ \varphi^n \big] \phantom {\sup_{P \in \cC^{h_n} (x_n)}}
		\\&\leq \liminf_{n \to \infty} \sup_{P \in \cC^{h_{N_n}} (x_{N_n})} E^P \big[ \varphi^n \big]
		\\&= \lim_{n \to \infty} \sup_{P \in \cC^{h_n} (x_n)} E^P \big[ \varphi^n \big].
	\end{align*}
	As \(Q \in \cD (x_0)\) was arbitrary,
	\[
	\sup_{Q \in \cD (x_0)} E^{Q} \big[ \varphi  \big] \leq \lim_{n \to \infty} \sup_{P \in \cC^{h_n} (x_n)} E^P \big[ \varphi^n \big],
	\]
	and the proof is complete.
	\end{proof}


\begin{thebibliography}{1}
\bibitem{charalambos2013infinite}
C.~D.~Aliprantis and K.~B.~Border. 
\newblock {\em Infinite Dimensional Analysis: A Hitchhiker's Guide}.
\newblock Springer Berlin Heidelberg, 3rd ed., 2006.

\bibitem{AF}
J.~P.~Aubin and H.~Frankowska.
\newblock {\em Set-Valued Analysis}.
\newblock Birkhäuser Boston, reprint of 1990 ed., 2009.

\bibitem{Barles_Jakobsen}
G.~Barles and E.~R.~Jacobsen.
\newblock On the convergence rate of approximation schemes for Hamilton-Jacobi-Bellman equations.
\newblock {\em M2AN - Mathematical Modelling and Numerical Analysis},36(1):33--54, 2002.

\bibitem{bauerle_rieder}
N.~B{\"a}uerle and U.~Rieder.
\newblock {\em Markov Decision Processes with Applications to Finance}.
\newblock Springer Berlin Heidelberg, 2011.

\bibitem{bershre}
D.~P.~Bertsekas and S.~E.~Shreve.
\newblock {\em Stochastic Optimal Control: The Discrete Time Case}.
\newblock Academic Press, New York San Francisco London, 1978.

\bibitem{BK}
N.~H.~Bingham and R.~Kiesel.
\newblock {\em Risk-Neutral Valuation}.
\newblock Springer London, 2nd ed., 2004.

\bibitem{blessing_etal_23}
J.~Blessing, L.~Jiang, M.~Kupper and G.~Liang.
\newblock Convergence rates for Chernoff-type approximations of convex monotone semigroups.
\newblock arXiv:2310.09830, 2023.

\bibitem{BKN23}
J.~Blessing, M.~Kupper and M.~Nendel.
\newblock Convergence of infinitesimal generators and stability of convex monotone semigroups.
\newblock arXiv:2305.18981, 2023. 

\bibitem{bogachev}
V.~I.~Bogachev.
\newblock {\em Measure Theory}.
\newblock Springer Berlin Heidelberg, 2007.

\bibitem{BN15}
B.~Bouchard and M.~Nutz.
\newblock Arbitrage and duality in nondominated discrete time models.
\newblock {\em The Annals of Applied Probability}, 25(2):823--859, 2015.

\bibitem{Camilli_Falcone}
F.~Camilli and M.~Falcone.
\newblock An approximation scheme for the optimal control of diffusion processes.
\newblock {\em M2AN - Mod{\'e}lisation math{\'e}matique et analyse num{\'e}rique}, 29(1):97--122, 1995.

\bibitem{CE15}
S.~N.~Cohen and R.~J.~Elliott.
\newblock {\em Stochastic Calculus and Applications}.
\newblock Springer Science+Business Media New York, 2nd ed., 2015.

\bibitem{C23b}
D.~Criens.
\newblock Stochastic processes under parameter uncertainty.
\newblock {\em Journal of Mathematical Analysis and Applications}, 538(2):128388, 2024.

\bibitem{CN22b}
D.~Criens and L.~Niemann.
\newblock Markov selections and Feller properties of nonlinear diffusions.
\newblock {\em Stochastic Processes and their Applications}, 173:104354, 2024.

\bibitem{CN23a}
D.~Criens and L.~Niemann.
\newblock Nonlinear semimartingales and Markov processes with jumps.
\newblock arXiv:2310.10546v2, 2023.

\bibitem{CN_MAFE}
D.~Criens and L.~Niemann.
\newblock Robust utility maximization with nonlinear continuous semimartingales.
\newblock {\em Mathematics and Financial Economics}, 17:499--536, 2023.

\bibitem{CN22a}
D.~Criens and L.~Niemann.
\newblock Nonlinear continuous semimartingales.
\newblock {\em Electronic Journal of Probability}, 28(146):1--40, 2023.

\bibitem{DHP11}
L.~Denis, M.~Hu and S.~Peng.
\newblock Function spaces and capacities related to sublinear expectation: application to \(G\)-Brownian motion paths.
\newblock {\em Potential Analysis}, 34:139--161, 2011.

\bibitem{D12}
Y.~Dolinsky.
\newblock Numercial schmes for \(G\)-expectations.
\newblock {\em Electronic Journal of Probability}, 17(98):1--15, 2012.

\bibitem{DNS12}
Y.~Dolinsky, M.~Nutz and H.~M.~Soner.
\newblock Weak approximation of \(G\)-expectations.
\newblock {\em Stochastic Processes and their Applications}, 122:664--675, 2012.

\bibitem{DP92}
D.~Duffie and P.~Protter.
\newblock From discrete- to continuous-time finance: weak convergence of the financial gain process.
\newblock {\em Mathematical Finance}, 2(1):1--15, 1992.

\bibitem{nicole1987compactification}
N.~El Karoui, D. Nguyen and M. Jeanblanc-Picqu{\'e}.
\newblock Compactification methods in the control of degenerate diffusions: existence of an optimal control.
\newblock {\em Stochastics}, 20(3):169--219, 1987.

\bibitem{EKNJ88}
N.~El Karoui, D. Nguyen and M. Jeanblanc-Picqu{\'e}.
\newblock Existence of an optimal Markovian filter for the control under partial observations.
\newblock {\em SIAM Journal of Control and Optimization}, 26(5):1025--1061, 1988.

\bibitem{ElKa15}
N.~El Karoui and X.~Tan.
\newblock Capacities, measurable selection and dynamic programming part II: application in stochastic control problems.
\newblock arXiv:1310.3364v2, 2015.

\bibitem{FTW11}
A.~Fahim, N.~Touzi and X.~Warin. 
\newblock A probabilistic numerical method for fully nonlinear parabolic PDEs.
\newblock {\em The Annals of Applied Probability}, 21(4):1322--1364, 2011.

\bibitem{GS07}
H.~Geman and Y.~F.~Shih.
\newblock Modeling commodity prices under the CEV model.
\newblock {\em The Journal of Alternative Investments}, 11 (3):65--84, 2009.

\bibitem{GP}
N.~Gigli and E.~Pasqualetto.
\newblock {\em Lectures on Nonsmooth Differetial Geometry}.
\newblock Springer Nature Switzerland, 2020.

\bibitem{hol16}
J.~Hollender.
\newblock { \em L{\'e}vy-Type Processes under Uncertainty and Related Nonlocal Equations. }
\newblock PhD thesis, TU Dresden, 2016. 

\bibitem{jacod79}
J.~Jacod.
\newblock {\em Calcul stochastique et probl\`emes de martingales}.
\newblock Springer Berlin Heidelberg New York, 1979.

	\bibitem{jacod1981weak}
J.~Jacod and J.~M\'emin.
\newblock Weak and strong solutions of stochastic differential equations:
Existence and stability.
\newblock In D.~Williams, editor, {\em Stochastic Integrals}, volume 851 of
{\em Lecture Notes in Mathematics}, pages 169--212. Springer Berlin
Heidelberg, 1981.

\bibitem{JS}
J.~Jacod and A.~N.~Shiryaev.
\newblock {\em Limit Theorems for Stochastic Processes}.
\newblock Springer Berlin Heidelberg, 2nd ed., 2003.

	\bibitem{KaraShre}
I.~Karatzas and S.~E.~Shreve.
\newblock {\em Brownian Motion and Stochastic Calculus}.
\newblock Springer New York, 2nd ed., 1991.

\bibitem{Kallenberg}
O.~Kallenberg.
\newblock {\em Foundations of Modern Probability}.
\newblock Springer Nature Switzerland, 3rd ed., 2021.

\bibitem{K77}
H.~J.~Kushner. 
\newblock {\em Probability Methods for Approximations in Stochastic Control and for Elliptic Equations}.
\newblock Academic Press, 1977.

\bibitem{K90}
H.~J.~Kushner.
\newblock Numerical methods for stochastic control problems in continuous time.
\newblock {\em SIAM Journal on Control and Optimization}, 28(5):999--1048, 1990.

\bibitem{KD}
H.~J.~Kushner and P.~Dupuis.
\newblock {\em Numercial Methods for Stochastic Control Problems in Continuous Time}.
\newblock Springer Science+Business Media New York, 2nd ed., 2001.

\bibitem{LTT23}
Y.~Li, X.~Tan and S.~Tang.
\newblock Discrete time approximation of stochastic optimal control with partial observation.
\newblock {\em SIAM Journal of Control and Optimization}, 62(1):326--350, 2024.

\bibitem{Menaldi_89}
J.-L.~Menaldi.
\newblock Some estimates for finite difference approximations.
\newblock {\em SIAM Journal on Control and Optimization}, 27(3):579--607, 1989.

\bibitem{nutzneufeld13}
A.~Neufeld and M.~Nutz.
\newblock Superreplication under volatility uncertainty for measurable claims.
\newblock {\em Electronic Journal of Probability}, 18(48):1--14, 2013.

\bibitem{nutz}
M.~Nutz.
\newblock Random G-expectations.
\newblock {\em The Annals of Applied Probability}, 23(5):1755–1777, 2013.

\bibitem{nutz15}
M.~Nutz.
\newblock Robust superhedging with jumps and diffusion.
\newblock {\em Stochastic Processes and their Applications}, 125:4543--4555, 2015.

\bibitem{NVH}
M.~Nutz and R.~van Handel.
\newblock Constructing sublinear expectations on path space. 
\newblock {\em Stochastic Processes and their Applications}, 123(8):3100--3121, 2013.

\bibitem{peng2007g}
S.~G.~Peng.
\newblock \(G\)-expectation, \(G\)-Brownian motion and related stochastic calculus of It{\^o} type.
\newblock In F.~E.~Benth et. al., editors, {\em Stochastic Analysis and Applications: The Abel Symposium 2005}, pages 541--567, Springer Berlin Heidelberg, 2007.

\bibitem{PTZ21}
L.~Pfeiffer, X.~Tan and Y.~Zhou.
\newblock Duality and approximation of stochastic optimal control problems under expectation constraints.
\newblock {\em SIAM Journal of Control and Optimization}, 59(5):3231--3260, 2021.

\bibitem{pollard}
D.~Pollard.
\newblock {\em Convergence of Stochastic Processes}.
\newblock Springer New York, 1984.

\bibitem{RY}
D.~Revuz and M.~Yor.
\newblock {\em Continuous Martingales and Brownian Motion}.
\newblock Springer Berlin Heidelberg, 3rd ed., 1999.

\bibitem{sion}
M.~Sion.
\newblock {\em A Theory of Semigroup Valued Measures}.
\newblock Springer Berlin Heidelberg, 1973.

\bibitem{SV}
D.~W.~Stroock and S.R.S.~Varadhan.
\newblock {\em Multidimensional Diffusion Processes}.
\newblock Springer Berlin Heidelberg, reprint of 1997 ed., 2006.

\bibitem{T14}
X.~Tan.
\newblock Discrete time probabilistic approximation of path-dependent stochastic control problems.
\newblock {\em The Annals of Applied Probability}, 26(5):1803--1834, 2014.

\bibitem{wul}
D.~E.~Wulbert.
\newblock Continuity of metric projections.
\newblock {\em Transactions of the American Mathematical Society}, 134(2):335--341, 1968.

\end{thebibliography}
\end{document}